\documentclass[11pt]{amsart}
\usepackage[T1]{fontenc}
\usepackage[utf8]{inputenc}
\usepackage{amsmath,amssymb,amsthm,mathtools,mathrsfs}
\usepackage{hyperref}
\hypersetup{colorlinks=true,linkcolor=blue,citecolor=blue,urlcolor=blue}
\theoremstyle{plain}
\newtheorem{thm}{Theorem}[section]
\newtheorem{lem}[thm]{Lemma}
\newtheorem{prop}[thm]{Proposition}
\newtheorem{cor}[thm]{Corollary}
\newtheorem{alphthm}{Theorem}

\theoremstyle{definition}
\newtheorem{defn}[thm]{Definition}
\newtheorem{setup}[thm]{Setup}
\newtheorem{notation}[thm]{Notation}
\theoremstyle{remark}
\newtheorem{rem}[thm]{Remark}
\newcommand{\C}{\mathbb C}
\newcommand{\R}{\mathbb R}
\newcommand{\cO}{\mathcal O}
\newcommand{\cT}{\mathcal T_g}
\newcommand{\cM}{\mathcal M}
\newcommand{\cG}{\mathcal G}
\newcommand{\cH}{\mathcal H}
\newcommand{\fk}{\mathfrak k}
\newcommand{\fh}{\mathfrak h}
\newcommand{\Id}{\operatorname{Id}}
\DeclareMathOperator{\End}{End}
\DeclareMathOperator{\Aut}{Aut}
\DeclareMathOperator{\Hom}{Hom}
\DeclareMathOperator{\Lie}{Lie}
\DeclareMathOperator{\Spec}{Spec}
\DeclareMathOperator{\Stab}{Stab}
\DeclareMathOperator{\Comp}{Comp}
\DeclareMathOperator{\im}{im}
\DeclareMathOperator{\tr}{tr}
\DeclareMathOperator{\Ad}{Ad}
\DeclareMathOperator{\pr}{pr}
\DeclareMathOperator{\Diff}{Diff}
\newcommand{\red}{\mathrm{red}}
\newcommand{\an}{\mathrm{an}}
\title{Stratification of Joint Moduli Spaces of Higgs Bundles}
\author{Shiyu Cao}
\address{Chern Institute of Mathematics, Nankai University}
\email{shiyucao@126.com}
\subjclass[2020]{14H60, 53C26, 53D17}
\keywords{Higgs bundles, Teichm\"uller space, orbit-type stratification,
relative symplectic forms, holomorphic Poisson structures}
\date{}

\begin{document}
\begin{abstract}
We study the marked analytic moduli space of polystable degree-zero
$\mathrm{SL}_r(\C)$-Higgs bundles over Teichm\"uller space. The connected
components of the orbit-type loci defined by the full determinant-one
stabilizers form a complex Whitney stratification. The projection to
Teichm\"uller space is a holomorphic submersion on every stratum. Each
stratum carries a canonical relative holomorphic symplectic form. These
forms determine a vertical holomorphic Poisson bracket on the reduced
structure sheaf, whose symplectic leaves are the connected orbit-type
strata in the fibers. On each fiber, the Hitchin--Kobayashi correspondence
is biholomorphic on every stratum and identifies its hyperk\"ahler
structure. The harmonic representatives and vertical hyperk\"ahler tensors
vary smoothly along the strata of the total space.
\end{abstract}
\maketitle

\section{Introduction}\label{sec:introduction}

A Higgs bundle on a closed Riemann surface $X$ is a holomorphic bundle $E$
together with a holomorphic map
$\varphi\colon E\to E\otimes K_X$, where $K_X$ is the canonical bundle.
Its rank and degree are those of $E$.
Hitchin introduced rank-two Higgs bundles in his study of the self-duality
equations on a compact Riemann surface \cite{Hitchin1987}.

The moduli problem is governed by stability with respect to the Higgs field.
The slope of a nonzero bundle is its degree divided by its rank.
A Higgs bundle is stable if every nonzero proper $\varphi$-invariant
holomorphic subbundle has slope strictly smaller than that of $E$.  It is
semistable if the corresponding slope is no greater than that of $E$.
It is polystable if it is a direct sum of stable Higgs bundles of the same
slope.
Two semistable objects are $S$-equivalent when their associated graded
polystable objects are isomorphic.
Thus an $S$-equivalence class has a polystable representative unique up to
isomorphism \cite[Proposition~4.1 and Definition~4.2]{Nitsure1991}.

For a fixed smooth projective curve, Nitsure constructs a quasi-projective
coarse moduli scheme of semistable Higgs bundles of fixed rank and degree,
whose points are their $S$-equivalence classes \cite{Nitsure1991}.
This construction is also a special case of Simpson's moduli theory for
semistable $\Lambda$-modules \cite[Theorem~4.7]{Simpson1994I}.

Harmonic metrics relate this holomorphic moduli problem to gauge theory.
For a smooth projective complex curve, polystable Higgs bundles of degree
zero correspond through harmonic metrics to semisimple local systems
\cite[Section~1, Theorem~1]{Simpson1992}.
On a curve, the relevant vanishing Chern class condition reduces to
$\deg E=0$.
For a fixed closed Riemann surface, the Kuranishi slice method constructs
the Higgs moduli space as a normal complex space \cite{FanConstruction}.
Analytic methods also establish its quasi-projectivity for a fixed curve
\cite{FanQuasiprojectivity}.

At a polystable point, automorphisms determine the local quotient
singularities.  The corresponding orbit type records the conjugacy class of
the full stabilizer.
For a fixed curve, the orbit-type decomposition has smooth hyperk\"ahler
strata, a complex Whitney stratification, and a compatible holomorphic
Poisson bracket \cite[Theorems~A--D]{FanOrbitTypes}.
The finite-dimensional reduction results
\cite[Theorem~1.4 and Sections~4.2--4.9]{Mayrand2020Stratification}
provide orbit-type stratifications and symplectic local models for
singular hyperk\"ahler quotients.

We consider simultaneous variation of the marked curve and the Higgs
bundle.
The marking identifies the underlying oriented surface with a fixed closed
connected surface $\Sigma$ of genus $g\geq2$.
Its marked Teichm\"uller space $\cT$ parametrizes the curves $X_b$.
The stable joint moduli space over Teichm\"uller space was constructed in
\cite[Theorem~4.6 and Section~4.8]{CTW25}.
For the polystable problem, we use the marked analytic space and its relative
Kuranishi quotient families constructed in \cite{CaoUniversalPolystable}.

We write $p\colon\cM\to\cT$ for this marked moduli space.
We impose the degree-zero $\mathrm{SL}_r(\C)$ convention: $r\geq2$, a
specified trivialization $\tau$ of $\det E$, a trace-free Higgs field, and
isomorphisms preserving $\tau$.
The analytic framework, including Sobolev regularity $k\geq4$ and
the full determinant-one gauge groups, is given in
Setup~\ref{defn:standing}.
Definition~\ref{defn:types} specifies the partition
$\mathcal P_{\mathrm{conn}}$ into connected orbit-type loci determined by
full stabilizers, together with the vertical tangent bundle
$T_{\mathscr S/\cT}$.  We study how these strata, their symplectic forms, and
their harmonic representatives vary with the marked curve.  The arguments
compare the full gauge stabilizers with the finite-dimensional stabilizers in
the relative Kuranishi models.

\subsection{Strata and harmonic representatives}
The first result identifies the intrinsic complex strata of the marked
moduli space.

\begin{alphthm}\label{thm:main-strata}
The partition $\mathcal P_{\mathrm{conn}}$ is a locally finite complex
Whitney stratification of $\cM$.
Its strata are connected locally closed complex submanifolds.
For strata $\mathscr S$ and $\mathscr R$, if
$\mathscr S\cap\overline{\mathscr R}\ne\varnothing$, then
$\mathscr S\subseteq\overline{\mathscr R}$.
Every incident pair satisfies Whitney conditions \textup{(a)} and
\textup{(b)} in local analytic embeddings, and each
$p|_{\mathscr S}$ is a holomorphic submersion.
\end{alphthm}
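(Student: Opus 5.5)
The plan is to reduce everything to the relative Kuranishi quotient models of \cite{CaoUniversalPolystable}. Fix a polystable point $m=[(E,\varphi,\tau)]$ over $b\in\cT$ with full determinant-one stabilizer $H=\Stab(m)$. Since $H$ is reductive, a neighbourhood of $m$ in $\cM$ is biholomorphic to $\mu^{-1}(0)/\!\!/H$. Here $U\subset\cT$ is a neighbourhood of $b$, and $\mu\colon U\times V\to W$ is a holomorphic $H$-equivariant Kuranishi map on a finite-dimensional representation $V$ (the harmonic first hypercohomology $\mathbb H^1$). The first step is the stabilizer comparison. For a point $(b',v)$ of the model whose $H$-orbit is closed, the full gauge stabilizer of the corresponding polystable object is conjugate to $\Stab_H(v)\subseteq H$. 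This is proved by the standard slice argument: near $m$, any gauge transformation fixing a nearby object can be pushed into $H$ by uniqueness of the slice. The argument must be carried out uniformly in $b'$, using the Sobolev setup ($k\ge4$) of Setup~\ref{defn:standing}. It follows that the orbit-type loci of $\mathcal P_{\mathrm{conn}}$ correspond locally to the images of the Luna-type strata $\{(b',v): \Stab_H(v)\sim K\}$ of the affine GIT quotient $(\mu^{-1}(0))/\!\!/H$, for $K\le H$.

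Second, I will identify the local strata. Write $V^K$ for the $K$-fixed subspace and $N=N_H(K)$. The $K$-type stratum is locally $(\mu^{-1}(0)\cap (U\times V^K_{(K)}))/(N/K)$, and $N/K$ acts freely there. The map $\mu$ restricted to $V^K$ takes values in $W^K$, and it is the relative Kuranishi map of the deformation problem for objects with automorphisms containing $K$. That problem is unobstructed in the $K$-invariant directions transverse to the orbit, which is the relative version of the fixed-curve statement \cite[Theorems~A--D]{FanOrbitTypes}. Concretely, I will show that $d\mu|_{V^K}$ at $0$ has $W^K$-part equal to the moment-map differential, which is surjective onto $(\mathfrak h^K)^\perp$ modulo $\mathfrak n$. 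I will also show that the $U$-directions are unobstructed, because the joint deformation complex splits as $T_b\cT\oplus$ vertical. Then $\mu^{-1}(0)\cap(U\times V^K_{(K)})$ is a manifold submersing onto $U$, and the free quotient gives a complex submanifold on which $p$ is a submersion. Taking connected components then gives the connected locally closed submanifolds. Local finiteness holds because $H$ has finitely many conjugacy classes of stabilizer subgroups on $V$.

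Third come the frontier condition and Whitney regularity. Both are local statements, and in the model they follow from the structure of complex-analytic affine quotients. In \cite[Sections~4.2--4.9]{Mayrand2020Stratification}, a hyperkähler or holomorphic-symplectic quotient of $\mu^{-1}(0)$ by a reductive group has a Whitney stratification by orbit types satisfying the frontier condition. In our setting the Teichmüller parameter enters as a trivial product factor after a fibrewise equivariant holomorphic trivialization, obtained by an $H$-equivariant holomorphic Morse/implicit-function normalization of $\mu$ in the $b'$-directions. Whitney (a) and (b) are preserved by products with $U$, so they follow from the fixed-curve case. The frontier property for connected components follows from local connectedness of the closures of local strata in the model. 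One also needs the local model strata to be connected near $m$, which holds because they are cones over $0$ in the homogeneous directions.

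I expect the main obstacle to be the trivialization in the parameter direction, i.e.\ showing that the family of Kuranishi maps $\mu_{b'}$ is equivariantly locally trivial over $U$. Whitney regularity and submersivity of $p$ both depend on this. My first approach is to use that $\mu_{b'}$ is, up to an equivariant holomorphic change of coordinates, the quadratic moment map $v\mapsto \frac12[v,v]$ of the symplectic $H$-representation $V$. This normal form is provided by the relative holomorphic symplectic form and an equivariant Darboux–Weinstein argument, carried out holomorphically in $b'$. The resulting model is $U\times(\text{fixed-curve model})$, and all claims reduce to the fixed-curve case.
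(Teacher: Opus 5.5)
Your overall architecture is the paper's: product relative Kuranishi quotient charts, Mayrand's stratification of the fixed-curve model, products with the base, and dilations for local connectedness. There is, however, a genuine gap in Step~1.

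\textbf{Gauge conjugacy versus $H_x$-conjugacy.} The loci defining $\mathcal P_{\mathrm{conn}}$ use conjugacy in $\cG_{k+1}^{\C}$, which is coarser than $H_x$-conjugacy. A smooth determinant-one gauge transformation can exchange two stable isotypic blocks with the same pair $(d_\alpha,m_\alpha)$, while $H_x$ preserves each block. So the stabilizers of nearby objects obtained by splitting one of these blocks or the other are gauge-conjugate but not $H_x$-conjugate. The pullback of a single gauge orbit-type locus to a chart is therefore in general a union of several $H_x$-orbit-type loci. Your claim that the loci ``correspond locally'' to the Luna strata is false as stated. The Whitney, frontier and finiteness results you import concern only the connected $H_x$-strata.

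You must show that the $H_x$-pieces of one gauge locus cannot accumulate on one another, so that both partitions have the same connected refinement. The paper does this in Lemma~\ref{lem:compact-refinement} and Proposition~\ref{prop:types}. On the minimal-vector locus, $H_w=K_w\exp(i\Lie K_w)$, and nearby stabilizers are $K_x$-conjugate into the stabilizer at the orbit. Equal gauge labels force equal dimensions and component counts, and a closed inclusion with equal invariants is an equality. Hence each $H_x$-type locus is relatively open and closed in its label locus. Without this step you do not know that the members of $\mathcal P_{\mathrm{conn}}$ are the product strata, so none of your conclusions follow.

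\textbf{Smaller points.}

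\emph{Parameter trivialization.} The step you single out as the main obstacle is already supplied by the cited framework. The quadratic identity gives $Z_x=B_x\times(Q_x\cap U_x)$, with $Q_x$ cut out by the $b$-independent quadratic moment map, so $\chi_x$ is a product chart from the outset. If you prove it yourself, a Moser--Darboux argument determines $\mu_{b'}$ only up to a constant $c(b')\in(\fh_x^*)^{H_x}$. This space is the dual of the trace-free centre of $\mathcal A_x$ and has dimension $s-1$. A nonzero $c(b')$ changes the zero locus, so you must show $c(b')=\mu_{b'}(0)=0$. No Moser flow is actually needed: Proposition~\ref{prop:trace-identity} shows that the pulled-back trace form on the slice is already the constant form $\Omega_{x,\C}$.

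\emph{Step~2.} This step is unnecessary and, as written, wrong, since $d\mu$ vanishes at $0$. Recentring the chart at each point makes the stratum through the centre equal to $\chi_x(B_x\times q_x(O_x))$ with $O_x\subset V_x^{H_x}$ (Proposition~\ref{prop:fixed-factor}). This gives smoothness and submersivity at once.

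\emph{Frontier property.} Cones with vertex $0$ only place the centre itself in the closure of each local stratum. The local-to-global frontier criterion needs the whole intersection $B_x\times\{0_{R_x}\}\times O_x$ in each closure. That uses the splitting $Y_x=R_x\times V_x^{H_x}$ together with dilations of the $R_x$ factor alone (Lemma~\ref{lem:local-neighborhoods}).
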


Harmonic representatives identify the vertical geometry of these strata
with the geometry of the fixed-curve gauge quotients.
For $b\in\cT$, we denote the fiberwise Hitchin--Kobayashi homeomorphism by
$\mathrm{HK}_b$.

\begin{alphthm}\label{thm:main-harmonic}
Every stratum $\mathscr S$ has smooth local
determinant-one harmonic lifts.  The harmonic tangent spaces invariant under
the full stabilizer form a smooth complex vector bundle canonically
identified with $T_{\mathscr S/\cT}$. The $L^2$ metric and the three complex
structures descend to canonical smooth tensors on this bundle over
$\mathscr S$.

For every $b\in\cT$, $\mathrm{HK}_b$ preserves the orbit
types determined by the full stabilizers and is biholomorphic on every
connected orbit-type component.
It identifies their hyperk\"ahler structures with those induced by gauge
theory on the connected components of
$\mathscr S\cap p^{-1}(b)$.
\end{alphthm}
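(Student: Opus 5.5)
We will prove Theorem~\ref{thm:main-harmonic} by working in the relative Kuranishi models of \cite{CaoUniversalPolystable} and transporting everything to harmonic gauge. Fix a point $m\in\mathscr S$ over $b_0\in\cT$, represented by a determinant-one polystable pair $(\bar\partial_E,\varphi)$ with full stabilizer $H\subset\mathcal G^{\C}$. By the Hitchin--Kobayashi correspondence on $X_{b_0}$ there is a harmonic metric, unique up to $H$. Reducing gives a solution $(A,\Phi)$ of the self-duality equations with compact stabilizer $K$, where $K^{\C}=H$. Our first step is to build a smooth family of harmonic lifts near $m$. On the stratum, the relative Kuranishi slice has $H$ acting with fixed locus the germ of $\mathscr S$. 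By Theorem~\ref{thm:main-strata} this fixed locus is a complex submanifold submersing onto $\cT$. We then solve the Hitchin equation for the pulled-back family of Higgs pairs, with metrics on $X_b$ varying smoothly in $b$, using the implicit function theorem in the Sobolev completions with $k\geq4$.

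The linearization of the moment-map equation at a $K$-invariant solution is the Laplacian $d_A^*d_A+[\Phi,[\Phi^*,\cdot]]$ on $i\,\mathfrak{su}$-valued functions. Its kernel is $\Lie K$, which is trivial in directions transverse to the orbit once we restrict to $K$-invariant data orthogonal to $\Lie K$. The complexified gauge transformation solving the equation can therefore be chosen $K$-equivariant and smooth in the parameters. This requires the coefficients, namely the Hodge star and the conformal structure on $\Sigma$, to depend smoothly on $b$, which holds because $\cT$ parametrizes smooth complex structures on the fixed surface $\Sigma$. The result is a smooth local determinant-one harmonic lift $s\colon U\cap\mathscr S\to\{\text{solutions}\}$, unique up to the normalizer of $K$. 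Since the stratum has constant orbit type, this ambiguity reduces to a smooth gauge action, and the lifts patch.

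Next we construct the bundle. At each harmonic lift, the harmonic tangent space $\cH^1_{(A,\Phi)}$ is the kernel of the deformation complex operator $D\oplus D^*$ on $\Omega^1$. The complex structures $I,J,K$ of gauge theory act on it. Its $K$-invariant part $(\cH^1)^K$ maps isomorphically to the vertical tangent space of the fixed-curve slice, which is $T_{\mathscr S/\cT}$ at that point. This follows from the Luna-type statement that the tangent space of the stratum through the slice is the fixed subspace, combined with Theorem~\ref{thm:main-strata}. Constancy of $\dim (\cH^1)^K$ along $\mathscr S$ then follows from this identification. Elliptic theory for a smooth family of Fredholm operators with kernels of constant dimension gives a smooth vector bundle. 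The $L^2$ metric and $I,J,K$ are $K$-invariant and invariant under residual gauge, so they descend to canonical smooth tensors on the bundle.

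Finally, on a fixed fiber $p^{-1}(b)$, $\mathrm{HK}_b$ sends a polystable pair to its harmonic solution. The complex and compact stabilizers correspond by $H=K^{\C}$, so orbit types are preserved. On each connected component of $\mathscr S\cap p^{-1}(b)$, the lift $s$ provides a smooth inverse. Its differential is the identification $(\cH^1)^K\cong T_{\mathscr S/\cT}$, which intertwines $I$ with the holomorphic structure, so the map is biholomorphic. The gauge-theoretic metric is by definition the $L^2$ metric on $(\cH^1)^K$, so the hyperk\"ahler structures agree. We expect the main obstacle to be the equivariant, parameter-dependent implicit function argument with smooth dependence on $b$. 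The delicate points are controlling the varying Hodge star, and showing that the harmonic lift stays on the same orbit type without jumping to a smaller $K$.
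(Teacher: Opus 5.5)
\textbf{Gap: the implicit function step producing the harmonic lifts.} Your overall route matches the paper's: restrict the relative Kuranishi family to the $H$-fixed locus, correct it by a $K$-equivariant positive complex gauge, harmonically project, and descend through unitary gauges. But the step that is supposed to produce the harmonic lifts does not work as stated.

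At the base solution the linearized real moment map is self-adjoint with kernel $\fk$ (up to a factor $i$). Restricting to $K$-invariant data does not remove this kernel. What remains is $\fk^{K}$, the skew-Hermitian trace-free center of $\mathcal A_x$, of real dimension $s-1$, where $s$ is the number of stable isotypic blocks; its complexification is $\mathcal H^0_u(\mathscr C_x)$ in Lemma~\ref{lem:hodge-ranks}. Restricting the domain to the orthogonal complement of $\fk^K$ makes the operator injective. By self-adjointness, however, its range is still the $L^2$-orthogonal complement of the same space, viewed as two-forms via $\star_b$.

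So whenever $s\geq2$ the implicit function theorem only solves the Hitchin equation modulo a finite-dimensional cokernel. This already happens for $\mathrm{SL}_2$ at $L\oplus L^{-1}$ with $\deg L=0$ and $L^2\not\simeq\cO_X$. Your ``therefore'' does not follow. This is exactly the situation in Proposition~\ref{prop:harmonic-lifts}: the correction of \cite[Proposition~3.1]{CaoUniversalPolystable} leaves a residual $\nu_{x,\R}(b,v)\in\star_b\fk_x$. The substance of that proof is showing that this residual vanishes on the fixed locus.

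\textbf{How to close it.} You need to supply that vanishing. The paper uses the finite-dimensional moment identity $d_v\langle\ell_b,\xi\rangle=\iota_{[\xi,v]}\omega_b$ with $\ell_b(0)=0$. For $v\in V_x^{H_x}$, every $\xi\in\fk_x$ fixes the whole ray $tv$, so $\ell_b(v)=0$. Nondegeneracy of the trace pairing between $\star_b\fk_x$ and $\fk_x$ then gives $\nu_{x,\R}=0$.

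An alternative that fits your set-up is Chern--Weil. On the fixed family the central idempotents of $\mathcal A_x$ stay holomorphic, $h$-self-adjoint, and commuting with $\varphi_u$ (Lemma~\ref{lem:constant-endomorphism-algebra}). A $K$-invariant gauge commutes with them. Hence the pairing of the real moment map with a central element is a combination of the degrees of the isotypic summands, which are zero. The equation therefore already takes values in the range, and the implicit function theorem applies.

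\textbf{A lesser omission on overlaps.} You claim the ambiguity between lifts ``reduces to a smooth gauge action.'' This needs unitary isomorphisms depending smoothly on the parameter. The paper constructs them in Lemma~\ref{lem:unitary-overlaps}: it projects a fixed isomorphism onto the constant-rank kernel of the Hom-complex, then normalizes by polar decomposition and determinant. This is what makes the descended bundle and tensors smooth and independent of the chosen lifts.
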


Both the definition of a stratum and its harmonic tangent space use the full
stabilizer.
The comparison with the fixed-curve results
\cite[Theorems~A and~C]{FanOrbitTypes}
therefore requires determinant-one conjugacy and descent through all
normalizer components, including factor permutations.
Sections~\ref{sec:local} and~\ref{sec:harmonic} establish these comparisons;
Section~\ref{sec:whitney} proves the global Whitney assertion.

\subsection{Relative forms and the Poisson bracket}
The symplectic geometry of a total stratum is relative to its map to
Teichm\"uller space.
We use the vertical trace convention \eqref{eq:trace}, with deformation
cocycles defined in Section~\ref{sec:deformation}.

\begin{alphthm}\label{thm:main-forms}
Every stratum $\mathscr S$ has a canonical form
$$
 \Omega^{\tr}_{\mathscr S/\cT}
 \in H^0(\mathscr S,\Omega^2_{\mathscr S/\cT}),\qquad
 d_{\mathscr S/\cT}\Omega^{\tr}_{\mathscr S/\cT}=0,
$$
nondegenerate on $T_{\mathscr S/\cT}$.
It is evaluated by the trace pairing on vertical deformation cocycles and
agrees with the invariant harmonic pairing.
In every relative Kuranishi quotient chart, it agrees with the reduced trace
form on each connected orbit-type stratum and for every curve parameter.
\end{alphthm}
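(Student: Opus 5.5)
The plan is to build $\Omega^{\tr}_{\mathscr S/\cT}$ locally in relative Kuranishi quotient charts and then show that the local forms glue because each one equals an intrinsic, chart-independent expression: the trace pairing on vertical deformation cocycles. Throughout, I use the results of Sections~\ref{sec:local} and~\ref{sec:harmonic} as summarized in Theorems~\ref{thm:main-strata} and~\ref{thm:main-harmonic}: $\mathscr S$ is a complex manifold, $p|_{\mathscr S}$ is a submersion, and $T_{\mathscr S/\cT}$ is canonically identified with the bundle of stabilizer-invariant harmonic tangent spaces.

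\textbf{Pointwise definition.} At a point of $\mathscr S$ over $b$, represented by a polystable $(E,\varphi)$ on $X_b$ with full determinant-one stabilizer $H$, I identify the vertical tangent space with the $H$-invariant part of the first hypercohomology of the trace-free deformation complex
$$\mathrm{End}_0E\xrightarrow{[\,\cdot\,,\varphi]}\mathrm{End}_0E\otimes K_{X_b}.$$
On cocycles $(\dot A,\dot\varphi)$ I set
$$\Omega^{\tr}\bigl((\dot A_1,\dot\varphi_1),(\dot A_2,\dot\varphi_2)\bigr)=\int_{X_b}\tr(\dot A_1\wedge\dot\varphi_2-\dot A_2\wedge\dot\varphi_1),$$
using the convention \eqref{eq:trace}. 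Stokes' theorem and invariance of the trace show that this is well defined on hypercohomology. It is also invariant under determinant-one gauge transformations, including normalizer components that permute factors, so it does not depend on the chosen representative.

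\textbf{Agreement with the harmonic and chart forms.} Choosing the harmonic representatives of Theorem~\ref{thm:main-harmonic}, the pairing becomes the harmonic pairing $g(I_{\mathrm{hol}}\cdot,\cdot)$ built from the $L^2$ metric and complex structures. By Serre duality, this pairing is nondegenerate on the full $\mathbb H^1$. Restricted to $H$-invariants it stays nondegenerate: the form is $H$-invariant and $H$ is reductive, so the invariant subspace is orthogonal to the sum of nontrivial isotypic components. This gives nondegeneracy on $T_{\mathscr S/\cT}$. In a relative Kuranishi quotient chart $(\mathcal U/\!/G)\times$ (parameter), the chart's reduced trace form on an orbit-type stratum is the restriction of the same trace pairing to the slice. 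The slice tangent space at a point maps isomorphically onto the harmonic space, so the two forms agree at every point and for every curve parameter. Since a form is determined by its values, the chart forms glue to a global form. Holomorphy then follows because in each chart the form is the reduced trace form, which is holomorphic on the stratum, jointly in the fiber and base directions.

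\textbf{Relative closedness.} On each fiber $\mathscr S\cap p^{-1}(b)$, the form is the holomorphic symplectic form of the fixed-curve stratum from \cite{FanOrbitTypes}, or equivalently the reduction of the closed constant form on the affine space of Higgs pairs. It is therefore closed on fibers. Because $p|_{\mathscr S}$ is a submersion, $d_{\mathscr S/\cT}$ is computed fiberwise, which gives $d_{\mathscr S/\cT}\Omega^{\tr}_{\mathscr S/\cT}=0$.

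\textbf{Main obstacle.} I expect the hardest step to be verifying that the reduced trace form in the relative Kuranishi chart depends holomorphically on the curve parameter and matches the cocycle pairing. The difficulty is that the complex structure of $X_b$ changes, so the slice and the trace pairing are written in varying Dolbeault models. I would first trivialize the family by the marking, write $\bar\partial_b=\bar\partial_0+\mu_b\partial$ with the Beltrami differential $\mu_b$, and check that the slice equations and the pairing depend holomorphically on $b$. If that becomes unwieldy, the fallback is to prove holomorphy along the fibers directly and obtain smoothness transversally from Theorem~\ref{thm:main-harmonic}. I would then conclude holomorphy in the base directions from the $\bar\partial$-closedness of the chart expression, using the holomorphic dependence of the relative Kuranishi family established in \cite{CaoUniversalPolystable}.
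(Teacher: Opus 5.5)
Your pointwise definition, the coboundary invariance via Stokes, and the nondegeneracy argument on $H$-invariants are sound; they match the paper's gauge-direction identity and averaging argument. The gap is in the chart comparison, and that step carries both holomorphy and the last assertion of the theorem.

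The reduced trace form $\Omega^{\tr}_{\mathscr R}$ on a stratum $\mathscr R\subset\mathcal W_x$ is not ``the restriction of the same trace pairing to the slice''. It is the Mayrand reduction of the \emph{constant} form $\Omega_{x,\C}$ on the fixed space $V_x$, and that form comes from the harmonic theory at the centre $x$ on the curve $X_{b_x}$. At a chart point $(b,v)$, the intrinsic form pairs the cocycles $d_vA_x(b,v)u_i$ on the curve $X_b$. These cocycles are not harmonic for the Higgs pair at $(b,v)$, and the correction $w_x(b,v)$ depends on $b$. On lower strata the stabilizer is even smaller than $H_x$.

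Your step ``the slice tangent space maps isomorphically onto the harmonic space, so the two forms agree'' is a non sequitur. A linear isomorphism does not intertwine two a priori different bilinear forms. In particular, nothing in your argument shows that the chart expression is independent of $b$. That independence is what ``for every curve parameter'' means, and it is what the later gluing of Mayrand's Poisson brackets needs. Since your proof of holomorphy rests on this identification, it is circular, and the $\bar\partial$-closedness fallback is not an argument.

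The missing idea is the trace identity of Proposition~\ref{prop:trace-identity}: $\Omega^{\mathrm{cfg},\tr}_{/B_x}(d_vA_x(b,v)u_1,d_vA_x(b,v)u_2)=\Omega_{x,\C}(u_1,u_2)$ for all $(b,v)$. It has two ingredients.
\begin{itemize}
\item The Beltrami computation $\vartheta_b\wedge(dz+\mu_{\bar z}^{\,z}(b)\,d\bar z)=d\bar z\wedge dz$ of Lemma~\ref{lem:configuration-pairing} shows that the vertical trace pairing on $X_b$, written in the fixed trivialization, is the same constant bilinear form for every $b$. This removes your ``varying Dolbeault models'' obstacle outright.
\item The Kuranishi correction is exact: $w_x=D'f_x$ with $f_x=i\Lambda G_{2,x}D_{1,x}w_x$. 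Stokes then gives $\langle D'f,u\rangle_{\tr}=\langle D'f,D'g\rangle_{\tr}=0$ for harmonic $u$, so the correction is invisible to the pairing.
\end{itemize}

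Consequently, in an $H_y$-fixed chart the form is literally the constant form $\Omega_{y,\C}|_{V_y^{H_y}}$. Holomorphy and relative closedness follow at once.

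For the comparison on all local strata, three further steps are needed:
\begin{itemize}
\item extend the identity to the saturated domain $\widetilde U_x$ using the $H_x$-equivariance \eqref{eq:saturated-family-equivariance}, since the closed-orbit locus $\widetilde{\mathscr R}$ may leave $U_x$;
\item invoke Mayrand's identity $q_x^*\Omega^{\tr}_{\mathscr R}=\Omega_{x,\C}|_{\widetilde{\mathscr R}}$;
\item use that pullback along the surjective submersion $q_x\colon\widetilde{\mathscr R}\to\mathscr R$ is injective on relative forms.
\end{itemize}
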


The comparison on all local strata extends the fixed-curve comparison
\cite[Lemmas~7.2--7.3 and Theorem~7.1]{FanOrbitTypes}.
This compatibility permits the local brackets to glue on the reduced
analytic structure sheaf.

\begin{alphthm}\label{thm:main-poisson}
There is a unique holomorphic Poisson bracket
on $\cO_{\cM}$, bilinear over $p^{-1}\cO_{\cT}$, whose restriction
to each connected component $\mathscr S_b$ of
$\mathscr S\cap p^{-1}(b)$ is
$\bigl(\Omega^{\tr}_{\mathscr S/\cT}|_{\mathscr S_b}\bigr)^{-1}$,
for every stratum $\mathscr S$ and $b\in\cT$.
For local holomorphic functions $f$ on the base and $g$ on $\cM$,
$$
 \{p^{\#}f,g\}=0.
$$
\end{alphthm}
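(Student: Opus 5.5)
The bracket will be constructed locally in relative Kuranishi quotient charts, glued on overlaps, and identified on strata using Theorem~\ref{thm:main-forms}. Near a polystable point $x\in\cM$ over $b_0\in\cT$, I take a relative Kuranishi quotient chart from \cite{CaoUniversalPolystable}. It presents a neighborhood of $x$ as $\bigl(\mu^{-1}(0)\times U\bigr)/\!\!/G$, or more precisely as a family $Z_b/\!\!/G$ for $b\in U\subseteq\cT$. Here $G$ is the full determinant-one stabilizer (reductive), $Z_b\subseteq H^1_b$ is the zero locus of a $G$-equivariant holomorphic moment-type map into $H^2_b$, and the vector spaces $H^1_b$ carry the $G$-invariant trace symplectic form varying holomorphically in $b$. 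On the smooth ambient $H^1\times U$, the relative (fiberwise) Poisson bracket $\{F,H\}=\omega_b^{-1}(d_{\mathrm{vert}}F,d_{\mathrm{vert}}H)$ is $\cO_U$-bilinear. It is $G$-invariant, so it preserves $G$-invariant functions. Its restriction to $Z_b$ is the standard Poisson reduction: invariants of the moment map components Poisson-commute with $G$-invariant functions modulo the ideal. This gives a bracket on $\cO_{\mathrm{chart}}=\bigl(\cO_{H^1\times U}/I\bigr)^G$, taken on the reduced structure. The identity $\{p^\#f,g\}=0$ holds because only vertical derivatives appear.

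Next I show that in each chart the local bracket restricts to each connected orbit-type stratum of each fiber $Z_b/\!\!/G$ as the inverse of the reduced trace form. This is the finite-dimensional singular-reduction statement: strata of $\mu^{-1}(0)/\!\!/G$ by orbit type are the symplectic leaves of the quotient Poisson structure, as in \cite[Section~4]{Mayrand2020Stratification} and the fixed-curve version \cite[Theorem~7.1]{FanOrbitTypes}. I apply it for each parameter $b$, using that everything varies holomorphically in $b$. By the last assertion of Theorem~\ref{thm:main-forms}, the reduced trace form coincides with $\Omega^{\tr}_{\mathscr S/\cT}|_{\mathscr S_b}$. Hence the local bracket has the leafwise property required by the statement.

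For uniqueness and gluing, I use Theorem~\ref{thm:main-strata}, which shows that the union of the top-dimensional strata intersected with fibers is dense. More precisely, for every stratum the restricted bracket is determined on each $\mathscr S_b$. A section of $\cO_{\cM}$ (reduced) is determined by its values pointwise, and the value $\{f,g\}(y)$ for $y\in\mathscr S_b$ is computed from $f|_{\mathscr S_b}$ and $g|_{\mathscr S_b}$ via the leafwise form. So two brackets with the leafwise property agree pointwise everywhere, hence coincide because $\cO_{\cM}$ is reduced. This gives uniqueness, and it also forces the chart brackets to agree on overlaps. They therefore glue to a global sheaf bracket; the Jacobi identity and the Leibniz rule are local and hold in charts.

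The main obstacle is the leaf identification in the chart: checking that the Poisson bracket descended from $H^1_b$ restricts, along a fiberwise orbit-type stratum defined by the \emph{full} stabilizer, to the inverse of the reduced form. This includes the non-identity components of the normalizer and the non-free directions. I would first treat the stratum through the chart center, where the descent is by the normalizer quotient $N_G(H)/H$ acting on $(H^1_b)^H$. Then I would extend to the other strata by re-centering the chart at nearby points, using the compatibility of relative Kuranishi charts from \cite{CaoUniversalPolystable}.
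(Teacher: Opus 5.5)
Your proposal is correct and follows the paper's proof. In the relative Kuranishi quotient charts you take local vertical brackets from Mayrand's finite-dimensional singular reduction, identify them with $\bigl(\Omega^{\tr}_{\mathscr S/\cT}|_{\mathscr S_b}\bigr)^{-1}$ on every fiberwise stratum through the chart comparison of Theorem~\ref{thm:main-forms} (Proposition~\ref{prop:chart-forms}), and get gluing and uniqueness because the leafwise property fixes $\{f,g\}$ pointwise on the reduced space $\cO_{\cM}$.

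The differences are technical:
\begin{itemize}
\item The paper's chart is the product $B_x\times Y_x$ with $b$-independent data $(V_x,\Omega_{x,\C},\nu_{x,\C})$, so no $b$-dependent family of linear models is needed.
\item The paper obtains the $b$-dependent bracket on arbitrary open sets by applying Mayrand's theorem to $V_x\oplus\C^{2d}$ with a trivial symplectic summand, instead of reducing an ambient vertical bracket by hand.
\item Your ``main obstacle'' is exactly what Mayrand's Theorem~1.4(ii) supplies, so the re-centering plan is unnecessary. Resting only on the biholomorphic compatibility of Cao's charts, it would presuppose the Poisson compatibility on overlaps that the leafwise property is meant to prove.
\end{itemize}
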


Corollary~\ref{cor:leaves} identifies the symplectic leaves exactly as the
connected components of
$\mathscr S\cap p^{-1}(b)$.
The hyperk\"ahler assertions of Theorem~\ref{thm:main-harmonic} concern
these fixed-curve strata; the forms of Theorem~\ref{thm:main-forms} are
holomorphic relative forms on the total strata.

\subsection*{Organization of the paper}
Section~\ref{sec:preliminaries} recalls the analytic setting, deformation
conventions, and relative Kuranishi families.
Section~\ref{sec:local} compares gauge stabilizers with stabilizers in the
Kuranishi models and describes the strata using fixed subspaces.
Section~\ref{sec:whitney} proves local finiteness, the frontier property,
and the Whitney conditions.
Section~\ref{sec:harmonic} constructs local harmonic representatives and
compares them by smoothly varying unitary gauge transformations.
Section~\ref{sec:forms} shows that the relative Kuranishi maps preserve the
trace pairing and descends this pairing to the strata.
Section~\ref{sec:poisson} glues the local Poisson brackets, identifies
their symplectic leaves, and studies the natural group actions.

\subsection*{Acknowledgments}
The author thanks Professor Qiongling Li for her support and Professors Kefeng Liu and Huitao Feng for their guidance over the years.

\subsection*{Disclosure of AI use}
The author used ChatGPT 5.6 and 6 to search for references, proofread the
manuscript, and check grammar. The author takes full responsibility for
the content.

\section{Preliminaries}\label{sec:preliminaries}
We collect the marked objects, gauge groups, and deformation conventions used
throughout the paper.  The relative Kuranishi families recalled below supply
the charts for the orbit-type and trace comparisons.

\subsection{Standing conventions}
\begin{setup}\label{defn:standing}
Let $\Sigma$ be a closed connected oriented smooth surface of genus $g\geq2$,
and let $r\geq2$ and $k\geq4$ be integers.
We write $\cT$ for its marked Teichm\"uller space, and $X_b$ for the
marked curve corresponding to $b\in\cT$.
A marked degree-zero $\mathrm{SL}_r(\C)$-Higgs bundle is
$\mathbf x=(X,E,\tau,\varphi)$, where the marking identifies the underlying
oriented surface of $X$ with $\Sigma$, $E$ is a rank-$r$ holomorphic bundle,
$\tau\colon\det E\xrightarrow{\sim}\cO_X$ is specified, and
$\varphi\in H^0(X,\End_0E\otimes K_X)$.
All isomorphisms preserve $\tau$.
Joint variation means simultaneous variation of $X$ and $(E,\tau,\varphi)$.

Let $p\colon\cM\to\cT$ be the reduced normal Hausdorff analytic
space of marked polystable objects of
\cite[Theorems A--B]{CaoUniversalPolystable}.
On a fixed smooth determinant-trivialized Hermitian reference bundle, let
$\cG_{k+1}$ and $\cG_{k+1}^{\C}$ be the full determinant-one unitary and
complex $W^{k+1,2}$ gauge groups.
Their smooth versions are $\cG$ and $\cG^\C$.
The configuration space has $W^{k,2}$ coefficients.  For
$c=(\bar\partial_E,\varphi)$ and $g\in\cG_{k+1}^{\C}$, we use the right
action
$$
 c^g=(g^{-1}\bar\partial_Eg,g^{-1}\varphi g),
 \qquad (c^g)^u=c^{gu}.
$$
For unitary connection--Higgs pairs, this action is defined through the
Dolbeault presentation and the associated Chern connections.
Every group and stabilizer includes all of its components.

Here $W^{s,2}$ denotes sections with square-integrable covariant derivatives
through order $s$, using smooth background metrics and a reference
connection on the closed surface.
Different background choices give equivalent norms.
In a local family these norms are uniformly equivalent on relatively
compact parameter neighborhoods.
The harmonic metric chosen at a base point has determinant one with respect
to $\tau$. We use the notation $\cM$, $p$, $V_x$, $Q_x$, $Y_x$, and
$q_x$ of \cite{CaoUniversalPolystable}. The surface complex structure is
denoted by $j_b$; the notation $J_b$ is reserved for one of the three
complex structures on the vertical tangent bundle.
\end{setup}

\begin{defn}\label{defn:types}
An orbit-type locus in $\cM$ consists of points for which the full
stabilizers of polystable representatives are conjugate as embedded
subgroups of $\cG_{k+1}^{\C}$.
Let $\mathcal P_{\mathrm{conn}}$ be the partition into connected components
of these loci, with their subspace topologies.
For a partition $\mathcal P$, write $\Comp(\mathcal P)$ for its connected
refinement.
For a smooth orbit-type locus $\mathscr S$ on which $p$ is a
submersion, put
$T_{\mathscr S/\cT}=\ker d(p|_{\mathscr S})$.
Its sections are called vertical tangent vectors.
\end{defn}

\subsection{Deformation complex}\label{sec:deformation}
Our Higgs deformation conventions agree with
\cite[Definition~2.2 and Lemma~2.15]{CaoUniversalPolystable}
and \cite[Section~2]{FanOrbitTypes}.
For reduced analytic quotients we use
\cite[Section~2.4]{Mayrand2020Stratification}.
Fix $x=[\mathbf x]\in\cM$, where
$\mathbf x=(X,E,\tau,\varphi)$, and choose its determinant-one harmonic
representative $\widehat{\mathbf x}=(X,\nabla_h,\tau,\varphi)$.  The Higgs
deformation complex is
$$
 C_x^\bullet=
 [\End_0E\xrightarrow{[\varphi,\cdot]}
 \End_0E\otimes K_X].
$$
Its Dolbeault resolution has arrows
$$
 D_{0,x}\xi=(\bar\partial_{\End}\xi,[\varphi,\xi]),\qquad
 D_{1,x}(a,\psi)=\bar\partial_{\End}\psi+[\varphi,a].
$$
For total degrees $i,j$, the bracket is
$[s,t]=s\wedge t-(-1)^{ij}t\wedge s$.
The harmonic metric $h$ and the K\"ahler metric on $X$ define the adjoints,
$$
 \Delta_{j,x}=D_{j-1,x}D_{j-1,x}^*+D_{j,x}^*D_{j,x},
 \qquad \cH_x^j=\ker\Delta_{j,x},
$$
with absent arrows omitted.
We denote harmonic projection by $\mathsf H_{j,x}$ and the Green
operator by $G_{j,x}$, with $G_{j,x}=0$ on $\cH_x^j$.
We set $V_x=\mathbb H^1(C_x^\bullet)$ and use the harmonic isomorphism
$V_x\cong\cH_x^1$ to regard elements of $V_x$ as harmonic representatives.

Let $H_x=\Aut(E,\tau,\varphi)$ and $K_x=H_x\cap\cG$.
The linear action is the left action $h\cdot v=hvh^{-1}$.
Our vertical trace pairing and moment-map normalization are
\begin{equation}\label{eq:trace}
 \Omega_{x,\C}((a,\psi),(a',\psi'))
   =\int_X\tr(a\wedge\psi'-a'\wedge\psi),
\end{equation}
\begin{equation}\label{eq:moment}
\begin{aligned}
 \langle\nu_{x,\C}(v),\xi\rangle
 &=\tfrac12\Omega_{x,\C}([\xi,v],v)
 =\int_X\tr(\xi[a,\psi]),\\
 d\langle\nu_{x,\C},\xi\rangle
 &=\iota_{[\xi,v]}\Omega_{x,\C}.
\end{aligned}
\end{equation}
Integration uses the orientation of $\Sigma$.
The trace duality
$$
 \operatorname{SD}_x(z)(\xi)=\int_X\tr(\xi z)
$$
identifies
$\cH_x^2$ with $(\cH_x^0)^*=\fh_x^*$, where $\fh_x=\Lie H_x$.
Thus
$$
 \nu_{x,\C}(v)
 =\operatorname{SD}_x(\tfrac12\mathsf H_{2,x}[v,v]).
$$
\begin{defn}\label{defn:model}
We define
$$
 Q_x=(\nu_{x,\C}^{-1}(0))_{\red},\qquad
 Y_x=(\Spec\C[Q_x]^{H_x})^{\an},\qquad q_x\colon Q_x\to Y_x.
$$
For $y\in Y_x$, let $H_x\cdot v\subset q_x^{-1}(y)$ be the unique closed
orbit.  The orbit type of $y$ is the $H_x$-conjugacy class of $(H_x)_v$.
\end{defn}

\subsection{Full stabilizers}\label{sec:stabilizers}
\begin{notation}\label{defn:parallel-algebra}
At a harmonic representative, the algebra of Higgs endomorphisms is a
finite-dimensional adjoint-closed algebra of parallel sections
\cite[Section~2.4]{CaoUniversalPolystable}.
Choose an orthogonal stable isotypic decomposition
$$
 (E,\varphi)=\bigoplus_{\alpha=1}^s
       (E_\alpha,\varphi_\alpha)\otimes V_\alpha,
$$
with pairwise nonisomorphic stable factors, ranks
$d_\alpha=\operatorname{rk}E_\alpha$, and fixed multiplicity spaces
$V_\alpha=\C^{m_\alpha}$. The endomorphism algebra is
$$
 \mathcal A_x=\bigoplus_{\alpha=1}^s
      \Id_{E_\alpha}\otimes\End(V_\alpha).
$$
\end{notation}

We first recall the full stabilizers described in
\cite{CaoUniversalPolystable}.  For the chosen harmonic representative, let
$\mathcal A_x$ be the algebra in
Notation~\ref{defn:parallel-algebra}.
The determinant-one units of $\mathcal A_x$ form the full reductive group
$$
\begin{aligned}
 H_x&=\ker\left(
 \prod_\alpha\mathrm{GL}(V_\alpha)\longrightarrow\C^*,\quad
 (a_\alpha)\longmapsto\prod_\alpha\det(a_\alpha)^{d_\alpha}\right)\\
 &=K_x\exp(i\fk_x),\qquad \fk_x=\Lie K_x.
\end{aligned}
$$
The polar decomposition includes all components.
Conjugation by $H_x$ commutes with the differentials $D_{j,x}$, their
adjoints, the harmonic projections, and the Green operators.  These
properties follow from harmonic equivariance and harmonic duality in
\cite[Lemmas~2.7 and~2.15, Section~2.4]{CaoUniversalPolystable}.

\subsection{Relative Kuranishi charts and families}
\label{sec:relative-families}
Choose a connected Beltrami coordinate ball $B_x\subset\cT$, a
$K_x$-invariant open ball $U_x\subset V_x$, and the holomorphic Kuranishi
map of \cite[Proposition~2.10 and Lemma~2.11]{CaoUniversalPolystable},
$$
 A_x(b,v)=v+w_x(b,v),\qquad
 w_x(b,v)\in\im D_{1,x}^*.
$$
On $B_x\times U_x$, the map $A_x$ is holomorphic with values in
$W^{m,2}$ for every integer $m\geq0$. Its coefficients are smooth on
$\Sigma$, and it satisfies
$$
 D_{0,x}^*A_x(b,v)=0,
 \qquad \mathsf H_{1,x}A_x(b,v)=v.
$$
The quadratic identity in
\cite[Theorem~2.16, equation~(10)]{CaoUniversalPolystable}
gives the reduced equation locus
$$
 Z_x=B_x\times(Q_x\cap U_x).
$$
The map $A_x$ is $H_x$-equivariant wherever both sides are defined.

Let $\theta_x$ denote the holomorphic Higgs family on $Z_x$.
By \cite[Theorem~2.19 and Lemma~2.20]{CaoUniversalPolystable},
there is a sufficiently small Stein neighborhood $\mathcal W_x\subset Y_x$
of $q_x(0)$ such that
\begin{equation}\label{eq:saturated-domain}
 \widetilde U_x:=q_x^{-1}(\mathcal W_x)
       \subset H_x\cdot(Q_x\cap U_x).
\end{equation}
Extend the family equivariantly to
$B_x\times H_x\cdot(Q_x\cap U_x)$ and denote its restriction to
$B_x\times\widetilde U_x$ by $\widetilde\theta_x$. It agrees with
$\theta_x$ on their common domain.
The right gauge action and the left model action are related by
\begin{equation}\label{eq:relative-equivariance}
 \widetilde\theta_x(b,h\cdot v)
     =\widetilde\theta_x(b,v)^{h^{-1}}
 \qquad(h\in H_x).
\end{equation}
The same identity holds for $\theta_x$ whenever both parameters lie in
$Z_x$. There is a biholomorphic chart
$$
 \chi_x\colon B_x\times\mathcal W_x
 \xrightarrow{\sim}\mathscr U_x\subset\cM,
 \qquad p\chi_x(b,y)=b.
$$
The polystable graded class of $\widetilde\theta_x(b,v)$ is
$\chi_x(b,q_x(v))$, and
\cite[Corollary~2.21]{CaoUniversalPolystable} gives
\begin{equation}\label{eq:stabilizer-comparison}
 \Stab_{\cG_{k+1}^{\C}}\widetilde\theta_x(b,v)=(H_x)_v.
\end{equation}
This identity also holds on the original Kuranishi domain.
The family $\widetilde\theta_x(b,v)$ is polystable exactly when
$H_x\cdot v$ is closed, by
\cite[Proposition~3.6]{CaoUniversalPolystable}.

Let $T$ be a reduced analytic germ with a distinguished point, and let a
holomorphic marked Higgs family over $T$ be equipped with an identification
of its distinguished fiber with $\mathbf x$.
After shrinking $T$, there is a holomorphic map $f:T\to Z_x$ over $B_x$
and a determinant-preserving family isomorphism with $f^*\theta_x$.
The isomorphism induces the prescribed identification at the distinguished
point and is holomorphic on the reduced total space of the curve family.
Local completeness and regularity follow from
\cite[Proposition~2.10, Lemmas~2.11--2.13, and Theorem~2.14]
{CaoUniversalPolystable}.

\subsection{Topology and regularity}\label{sec:topology}
For $b\in\cT$, let $\cM_{\mathrm{Hit},b}$ denote the quotient by
$\cG$ of smooth solutions to the Hitchin equations on $X_b$ whose
determinant connection is trivial under $\tau$ and whose Higgs field is
trace-free.  Put $\cM_b=p^{-1}(b)$.  The map that sends a
solution to its underlying polystable Higgs bundle defines a homeomorphism
$$
 \mathrm{HK}_b\colon\cM_{\mathrm{Hit},b}
      \xrightarrow{\ \sim\ }\cM_b.
$$
Harmonic representatives of the same object are related by a smooth
determinant-one unitary gauge.
The charts and their transitions are independent of the Sobolev
presentation, and gauges between smooth representatives are smooth.
The results on graded representatives and open quotient charts in
\cite[Propositions~3.6 and~3.9 and Theorem~3.8]{CaoUniversalPolystable}
give the quotient topology used below and identify it with the topology
defined by the relative Kuranishi charts.

\section{Orbit-type decompositions}\label{sec:local}
We compare the full complex-gauge stabilizers of Higgs bundles with the
stabilizers of the quadratic models from Section~\ref{sec:preliminaries}.
Connected refinement identifies the intrinsic orbit-type strata.  The
subspaces fixed by the full stabilizers provide coordinates along these strata.

\subsection{Full stabilizers and connected refinements}

\begin{lem}\label{lem:compact-refinement}
Let a compact Lie group $K$, possibly disconnected, act smoothly on a
finite-dimensional manifold $M$.
Let $Z\subseteq M$ be any invariant subset, with its subspace topology,
and give $Z/K$ the quotient topology.
Let $\operatorname{lab}$ assign labels in a set $\mathfrak L$ to the
stabilizers $K_z$.
Assume that $\operatorname{lab}$ is invariant under $K$-conjugacy and that
equal labels imply equality of real dimensions and numbers of connected
components.

Every $K$-orbit-type locus is relatively open and closed in its label
locus.
On every open subset of $Z/K$, the partitions by $K$-orbit types and by
labels have identical connected refinements.

The conclusion also holds for a continuous $K$-action on a Hausdorff
space with compact Lie stabilizers if each orbit has an invariant
neighborhood in which every stabilizer is $K$-conjugate to a subgroup of
the stabilizer at that orbit.
\end{lem}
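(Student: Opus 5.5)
We first reduce the second assertion to the first.  The label locus of a label $\ell$ is a union of $K$-orbit-type loci, because $\operatorname{lab}$ is invariant under conjugacy.  Suppose every orbit-type locus is relatively open and closed in its label locus.  Then each connected component of a label locus (in any open $W\subseteq Z/K$) lies inside a single orbit-type locus.  Conversely, each connected component of an orbit-type locus is contained in a label locus.  Being open and closed there, it is a connected component of that label locus.  Hence the two connected refinements coincide on every open subset.  So everything rests on the openness and closedness claim, which is local on $Z/K$.

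For local openness, fix $z\in Z$ and use the slice theorem for the compact group $K$ acting smoothly on $M$.  There is a $K$-invariant tubular neighborhood $K\times_{K_z}S$ of $Kz$, in which every stabilizer $K_{z'}$ is conjugate to a subgroup of $K_z$.  Intersecting with $Z$ gives an invariant neighborhood of $Kz$ in $Z$ with the same property, and its image is open in $Z/K$.  In the topological case this is exactly the extra hypothesis.

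Now take $z'$ in this neighborhood with the same label as $z$.  Then $gK_{z'}g^{-1}\subseteq K_z$ for some $g\in K$.  The two groups are compact Lie groups with equal dimension and an equal finite number of components.  A closed subgroup of equal dimension contains the identity component.  The inclusion therefore induces an injection of component groups of equal finite cardinality, so it is a bijection, and $gK_{z'}g^{-1}=K_z$.  Consequently, within the label locus of $z$, the orbit type of $z$ contains a neighborhood of $z$.  Every orbit-type locus is therefore relatively open in its label locus.

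Closedness then follows formally.  The label locus is a disjoint union of orbit-type loci, each relatively open, so the complement of any one of them is open and that orbit-type locus is relatively closed.  No subtlety arises from quotient topology here: saturated open sets in $Z$ correspond to open sets in $Z/K$, and the neighborhoods above are saturated.

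The main obstacle is the equal-dimension, equal-component-count step, which must handle disconnected groups.  The injection on $\pi_0$ requires $(gK_{z'}g^{-1})^0=K_z^0$, which comes from the dimension equality together with connectedness of $K_z^0$.  It also needs finiteness of $\pi_0$, which comes from compactness.  A secondary point is to make sure the slice theorem applies to an arbitrary invariant subset $Z$ with its subspace topology, which we obtain by simply restricting the tube.
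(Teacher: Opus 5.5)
Your proof is correct and follows essentially the same route as the paper: a local conjugate inclusion $gK_{z'}g^{-1}\subseteq K_z$, obtained from an invariant tube in the smooth case (the paper uses the equivariant projection of a tubular neighborhood) or from the extra hypothesis in the topological case, is upgraded to equality by the equal-dimension and equal-component-count argument. This makes each orbit-type locus relatively open, hence also closed, in its label locus, and the agreement of the connected refinements on every open subset follows.
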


\begin{proof}
For a closed inclusion $L_1\subseteq L_2$ of compact Lie groups,
$$
 \dim L_1=\dim L_2,
 \quad |\pi_0(L_1)|=|\pi_0(L_2)|
 \quad\Longrightarrow\quad
 L_1^0=L_2^0,
 \quad L_1=L_2.
$$
Indeed, the dimension equality gives $\Lie L_1=\Lie L_2$ and hence
$L_1^0=L_2^0$; the components of both groups are cosets of this common
identity component, and the inclusion together with the component count
exhausts the cosets of $L_2$.

Suppose that the local containment property holds, and fix $z\in Z$.
We choose an invariant relatively open neighborhood $N$ of $Kz$ with
that property.
For $w\in N$ with
$\operatorname{lab}(K_w)=\operatorname{lab}(K_z)$, the local containment
property conjugates $K_w$ into $K_z$.
The preceding inclusion argument makes these subgroups equal.
The image of $N$ is open in $Z/K$, since its inverse image is $N$.
Thus the stabilizer conjugacy class is locally constant on each label
locus. Each orbit-type locus is relatively open in its label locus, and
its relative complement is a union of the other orbit-type loci.

After intersection with an arbitrary open subset of the quotient, every
connected subset of a label locus lies in a single orbit-type locus.
Maximality therefore identifies the connected components of the two loci.

For the smooth action, we average a Riemannian metric over the full
compact group.
An invariant tubular neighborhood of the embedded compact orbit $Kz$
has an equivariant projection onto $Kz$.
If the projection of $w$ is $kz$, then
$K_w\subseteq kK_zk^{-1}$.
Intersecting this neighborhood with $Z$ gives the local containment
property, as in \cite[proof of Lemma~4.1]{Mayrand2020Stratification}.
\end{proof}

We first compare conjugacy in the full determinant-one groups.

\begin{prop}\label{prop:conjugacy}
Let $K_i$ and $H_i=K_i\exp(i\Lie K_i)$ be the full stabilizers of two
smooth harmonic representatives on the reference bundle.
Let $\mathcal A_i$ be the corresponding adjoint-closed algebra of parallel
Higgs endomorphisms.
Then
$$
 \bigl(\exists g\in\cG_{k+1}^{\C}:\ gH_1g^{-1}=H_2\bigr)
 \quad\Longleftrightarrow\quad
 \bigl(\exists u\in\cG_{k+1}:\ uK_1u^{-1}=K_2\bigr).
$$
The same equivalence holds for smooth gauge transformations and is
independent of the chosen harmonic representatives.
\end{prop}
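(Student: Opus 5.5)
The key structural fact is that $H_i$ is the complexification of $K_i$: by Section~\ref{sec:stabilizers}, $H_i=K_i\exp(i\fk_i)$ is the determinant-one unit group of the adjoint-closed algebra $\mathcal A_i$, and $K_i=H_i\cap\cG$ is a maximal compact subgroup meeting every component. The plan is therefore to prove the two implications separately, using the polar decomposition of $\cG^\C_{k+1}$ for the forward direction. Independence of representatives and the smooth version are handled at the end.

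\emph{($\Leftarrow$).} Given $u\in\cG_{k+1}$ with $uK_1u^{-1}=K_2$, conjugation by $u$ preserves the Hermitian structure. It therefore maps $\fk_1$ onto $\fk_2$, hence $i\fk_1$ onto $i\fk_2$. Consequently
$$
 uH_1u^{-1}=uK_1u^{-1}\exp(i\,\Ad_u\fk_1)=K_2\exp(i\fk_2)=H_2 .
$$

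\emph{($\Rightarrow$).} Suppose $gH_1g^{-1}=H_2$. Then $gK_1g^{-1}$ is a compact subgroup of $H_2$ meeting every component of $H_2$. Since $K_2$ is a maximal compact subgroup of the reductive group $H_2$, the Cartan/Iwasawa conjugacy of maximal compact subgroups gives $h\in H_2$ with $hgK_1g^{-1}h^{-1}\subseteq K_2$. Equality follows from equal dimension and component count, as in the first paragraph of the proof of Lemma~\ref{lem:compact-refinement}. Replacing $g$ by $g':=hg$, we may assume $g'K_1g'^{-1}=K_2$ and $g'H_1g'^{-1}=H_2$.

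Now write $g'=u\,e^{\sigma}$ pointwise, with $u$ unitary of determinant one and $\sigma$ a trace-free Hermitian section; this is the pointwise polar decomposition, which preserves $W^{k+1,2}$ regularity (and smoothness) because $\exp$ restricted to Hermitian matrices is a real-analytic diffeomorphism onto positive matrices and $k+1\ge2$ makes $W^{k+1,2}$ an algebra. The identity $g'K_1g'^{-1}=K_2$ consists of unitary operators, so taking adjoints gives $g'^{*-1}K_1g'^{*}=K_2$. Combining the two relations shows that $g'^*g'=e^{2\sigma}$ commutes with $K_1$. By uniqueness of positive square roots, $e^{\sigma}$ then commutes with $K_1$. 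It follows that $uK_1u^{-1}=g'e^{-\sigma}K_1e^{\sigma}g'^{-1}=g'K_1g'^{-1}=K_2$.

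\emph{Smooth version and independence.} The smooth version follows by running the same argument with smooth $g$: the polar factors are smooth, and $h\in H_2$ is a parallel section, hence smooth. Independence of the harmonic representatives holds because any two representatives of the same object differ by a smooth determinant-one unitary gauge (Section~\ref{sec:topology}). Such a gauge conjugates both $K_i$ and $H_i$, so it preserves both sides of the equivalence.

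\emph{Main obstacle.} The hardest step is the maximal-compact conjugacy in $H_2$ when $H_2$ is disconnected. The group $H_2$ is the kernel of $\prod_\alpha\det(a_\alpha)^{d_\alpha}$ and may have finitely many components. Here I would invoke the version of Cartan's theorem for linear reductive groups with finitely many components whose maximal compact subgroup meets every component, using that $H_2=K_2\exp(i\fk_2)$ is self-adjoint with polar decomposition covering all components. If that citation is awkward, the fallback is direct: average a Hermitian form over $gK_1g^{-1}$ inside the finite-dimensional $*$-algebra $\mathcal A_2$. The resulting invariant positive element $p=e^{2\eta}$, with $\eta\in i\fk_2$, satisfies $e^{\eta}gK_1g^{-1}e^{-\eta}\subseteq K_2$.
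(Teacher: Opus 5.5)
Your argument is correct and has the same skeleton as the paper's. First you move $C=gK_1g^{-1}$ into $K_2$ by an element of $H_2$. Then you strip off the positive polar factor of the new conjugator, using that $g_0^*g_0$ commutes with $K_1$. The second step, the determinant bookkeeping, the converse, and the independence of harmonic representatives are essentially identical to the paper's.

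The difference lies in the first step. The paper does not cite Cartan--Malcev--Iwasawa. It uses exactly your fallback: $M=\int_C c^*c\,dc\in\mathcal A_2$ with $c^*Mc=M$, normalized to the determinant-one positive element $p=(\det M)^{-1/(2r)}M^{1/2}\in H_2$, so that $pCp^{-1}\subseteq K_2$. Some such normalization is needed for your $\eta$ to be trace-free and hence lie in $i\fk_2$. The paper then gets equality from uniqueness of polar factors rather than from a dimension and component count. This route is self-contained, handles all components at once, and keeps everything inside the finite-dimensional algebra of parallel sections, so no regularity question arises. Your citation route is valid for Lie groups with finitely many components. It does, however, require the easy check, unstated in your proposal, that $K_2$ is maximal compact in $H_2$: if $k\exp(i\xi)$ with $\xi\ne0$ lay in a compact group containing $K_2$, so would the unbounded powers $\exp(in\xi)$.

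The paper's proof also contains a step your proposal lacks: it shows that a $W^{k+1,2}$ unitary conjugator can be replaced by a smooth one. It approximates $u$ uniformly by a smooth $s_0$ and averages $s=\int_{K_1}(utu^{-1})\,s_0\,t^{-1}\,dt$. This $s$ is smooth because $K_1$ and $K_2$ consist of parallel sections, and it is intertwining by Haar invariance. The paper then normalizes $\det s$ and takes the unitary polar factor. Your ``smooth version'' only shows that a smooth complex conjugator yields a smooth unitary one, which proves the literal smooth biconditional. From a Sobolev $g$, your construction gives only a Sobolev $u$. So you do not obtain the agreement of $\cG_{k+1}$-conjugacy with smooth conjugacy. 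The paper draws this agreement from the proposition and uses it in the proof of Proposition~\ref{prop:types}, where the proposition ``supplies the smooth conjugator''.
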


\begin{proof}
We extend the argument for conjugacy of compact stabilizers in
\cite[Theorem~3.1 and Corollary~3.3]{FanOrbitTypes} to the full
determinant-one gauge groups.

Suppose $gH_1g^{-1}=H_2$ and set $C=gK_1g^{-1}$.
Normalized Haar measure on $C$ defines
$$
 M=\int_C c^*c\,dc .
$$
The endomorphism $M$ belongs to the parallel algebra $\mathcal A_2$, is positive,
and satisfies
$$
 c_0^*Mc_0=M\qquad(c_0\in C).
$$
Since $M$ is parallel, $\det M$ is constant on $\Sigma$.  Functional
calculus in $\mathcal A_2$ gives the determinant-one positive endomorphism
$p=(\det M)^{-1/(2r)}M^{1/2}\in H_2$.
The identity above gives $C'=pCp^{-1}\subseteq K_2$.
Indeed, for $c\in C$,
$$
 (pcp^{-1})^*(pcp^{-1})
 =p^{-1}c^*p^2cp^{-1}=\Id.
$$
Conjugating the polar decomposition of $H_1$ by $pg$ gives
$H_2=C'\exp(i\Lie C')$.
If an element of this expression lies in $K_2$, uniqueness of its matrix
polar decomposition forces its positive factor to be the identity.
Hence $C'=K_2$, including all components.

We put $g_0=pg$ and $h_0=(g_0^*g_0)^{1/2}$.
Since $g_0K_1g_0^{-1}=K_2$, $g_0^*g_0$ commutes with every element of
$K_1$.
Its positive square root does too.
Thus $u=g_0h_0^{-1}$ is a unitary conjugator.
Moreover $\det h_0=|\det g_0|=1$, so $\det u=1$.
Conversely, a unitary conjugator of the compact groups conjugates their
polar decompositions.

The algebra $W^{k+1,2}$ is closed under multiplication, inversion, and
smooth functional calculus on uniformly positive matrices.  Hence the
conjugator constructed above remains of class $W^{k+1,2}$.
The averaging defining $p$ takes place in a finite-dimensional algebra of
smooth sections.
A smooth $g$ gives a smooth $u$.

A Sobolev conjugator of the stabilizer representations can be replaced
by a smooth conjugator as follows.
For a unitary conjugator $u$, let $\gamma(t)=utu^{-1}$, $t\in K_1$.
We approximate $u$ uniformly by a smooth endomorphism $s_0$ and set
$$
 s=\int_{K_1}\gamma(t)s_0t^{-1}\,dt.
$$
The stabilizer representations consist of smooth sections in
finite-dimensional spaces, so $s$ is smooth.
Haar invariance gives $st=\gamma(t)s$, while
$\|s-u\|_{C^0}\leq\|s_0-u\|_{C^0}$.
We choose the approximation so that $s$ is invertible and $\det s$ lies
in a disk about $1$ avoiding zero.
The equivariant endomorphism
$$
 s^\circ=\exp\!\left(-\frac1r\log(\det s)\right)s,
 \qquad s^\circ t=\gamma(t)s^\circ,
$$
is smooth and has determinant one.  Its unitary polar factor is a smooth
determinant-one conjugator.
This proves independence of Sobolev order and agreement with smooth
conjugacy.
Changing a harmonic representative conjugates its compact stabilizer by
a smooth determinant-one unitary gauge, by Section~\ref{sec:topology}.
Thus the conjugacy classes are independent of the harmonic
representatives.
\end{proof}

For the local partition comparison, we set $H=H_x$ and $K=K_x$.
We choose a $K_x$-invariant Hermitian norm on $V_x$ and denote by
$N\subset Q_x$ the minimal-vector locus, equivalently the zero locus of its
real moment map.
The finite-dimensional quotient theorem
\cite[Theorem~2.12 and Section~4.2]{Mayrand2020Stratification}
identifies $N/K$ homeomorphically with $Y_x$.
Every closed $H$-orbit meets $N$ in one $K$-orbit.
Here $V_x$ is a unitary representation, $H=K^\C$ is reductive, and $Q_x$
is a closed invariant reduced affine subvariety.
The theorem applies to the possibly disconnected group $H$.

\begin{prop}\label{prop:types}
Let $\mathcal P_H$ be the orbit-type partition of $Y_x$ in
Definition~\ref{defn:model}, and let $\mathcal P_{\mathrm{gauge}}$ be the
partition of $\cM$ into the loci defined by complex-gauge conjugacy in
Definition~\ref{defn:types}.
In the charts of Section~\ref{sec:relative-families},
$$
 \Comp\!\left(
   \chi_x^{-1}(\mathcal P_{\mathrm{gauge}}|_{\mathscr U_x})
 \right)
 =
 \Comp\!\left(
   \{B_x\times\mathscr R:
      \mathscr R\in\mathcal P_H|_{\mathcal W_x}\}
 \right).
$$
The corresponding statement holds on every fiber over $b\in B_x$.
The equality is independent of the chosen closed-orbit and harmonic
representatives, quotient chart, and smooth or Sobolev presentation.
On an overlap, one first restricts the loci and then takes connected
components.
\end{prop}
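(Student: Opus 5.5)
The plan is to pull both partitions back to the compact quotient $N/K\cong\mathcal W_x$ and then apply Lemma~\ref{lem:compact-refinement} with a label that records the full gauge conjugacy class.

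First we identify the gauge locus of a chart point. For $(b,y)\in B_x\times\mathcal W_x$, choose the closed orbit in $q_x^{-1}(y)$ and a representative $v\in N$, so that $(H)_v=K_v\exp(i\Lie K_v)$ is the complexification of the compact stabilizer $K_v$. By \eqref{eq:saturated-domain} and \cite[Proposition~3.6]{CaoUniversalPolystable}, $\widetilde\theta_x(b,v)$ is a polystable representative of $\chi_x(b,y)$. By \eqref{eq:stabilizer-comparison}, its full stabilizer in $\cG_{k+1}^{\C}$ is exactly $(H)_v$, viewed inside the gauge group through the embedding $H_x\subset\cG^\C$. By Proposition~\ref{prop:conjugacy}, the gauge-conjugacy class of this stabilizer depends only on the $\cG$-conjugacy class of $K_v$, and it is independent of the harmonic or closed-orbit representative and of the Sobolev order. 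We therefore define the label
\[
 \operatorname{lab}(K_v)=\bigl[\,K_v\subset\cG_{k+1}\,\bigr]_{\text{gauge conjugacy}}.
\]
This label is invariant under $K$-conjugacy, since $K\subset\cG$. Conjugate subgroups have equal dimension and equal numbers of components, so equal labels give equal dimensions and component counts. The label does not depend on $b$, because the stabilizer comparison holds for every curve parameter and both sides refer to fixed subgroups of $H_x$. Thus $\chi_x^{-1}(\mathcal P_{\mathrm{gauge}}|_{\mathscr U_x})$ is $B_x$ times the label partition of $\mathcal W_x$.

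Next we apply Lemma~\ref{lem:compact-refinement} with $M=V_x$, the compact group $K$ acting linearly (possibly disconnected), and $Z=N\cap q_x^{-1}(\mathcal W_x)$. Then $Z/K\cong\mathcal W_x$ by the finite-dimensional quotient theorem. The $K$-orbit-type partition of $N/K$ coincides with $\mathcal P_H$. The reason is that $H$-conjugacy of the complexifications $(H)_v$ with $v\in N$ is equivalent to $K$-conjugacy of the $K_v$; this is the finite-dimensional analogue of Proposition~\ref{prop:conjugacy}, proved by the same averaging and polar-decomposition argument. The lemma then says the label partition and $\mathcal P_H|_{\mathcal W_x}$ have the same connected refinements on every open subset of $\mathcal W_x$. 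Because $B_x$ is connected, taking products with $B_x$ commutes with connected refinement: a connected component of $B_x\times S$ is $B_x\times(\text{component of }S)$. This gives the displayed equality.

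For a fixed fiber over $b$, the same argument applies with $\{b\}$ in place of $B_x$. On overlaps, the definition of $\Comp$ after restriction to the open set $\mathscr U_x\cap\mathscr U_{x'}$ is already covered, since the lemma is stated for arbitrary open subsets of the quotient. Independence of the quotient chart follows because the left-hand side is intrinsic, as it is defined by the gauge labels.

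I expect the main obstacle to be the identification of $H$-conjugacy of the stabilizers $(H)_v$, for closed orbits, with $K$-conjugacy of the $K_v$ for $v\in N$, and the passage from the loci of $N/K$ to the homeomorphic $Y_x$. This needs care with disconnected $H$ and with the requirement that conjugators lie in $H$, not merely in $\cG^\C$. I would first try the averaging argument of Proposition~\ref{prop:conjugacy}, carried out inside the finite-dimensional algebra generated by $H$. If that fails, I would fall back on the Kempf--Ness and Luna slice description in \cite[Section~4.2]{Mayrand2020Stratification}.
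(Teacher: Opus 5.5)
Your proposal is correct and shares the paper's skeleton: labels on the minimal-vector locus $N$, Lemma~\ref{lem:compact-refinement}, the comparison \eqref{eq:stabilizer-comparison}, and connectedness of $B_x$. The difference is your choice of label, and that choice is what creates the obstacle you anticipate.

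The paper labels $K_w$ by $[H_w]_{\cG_{k+1}^{\C}}$, the complex-gauge class of the full stabilizer itself. With this label, \eqref{eq:stabilizer-comparison} identifies the pulled-back gauge loci with $B_x$ times label loci without any conjugacy theorem. The only input is the polar decomposition $H_w=K_w\exp(i\Lie K_w)$ for $w\in N$, which the paper proves by convexity of $t\mapsto\|\exp(it\xi)w\|^2$ and which you assume. It gives the equal dimensions and component counts that the lemma requires.

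On the model side, the paper never proves that $H$-types and $K$-types coincide. It uses only that $K$-conjugate stabilizers have $H$-conjugate complexifications. Hence each $H$-orbit-type locus is a union of $K$-orbit-type loci inside a single label locus, so it is relatively open and closed there, and the connected refinements agree.

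Your unitary label $[K_v]_{\cG_{k+1}}$ makes the lemma's hypothesis trivial, but it requires two reverse implications. Your sentence that the gauge class ``depends only on'' the class of $K_v$ is the easy direction. Identifying the gauge partition with the label partition also needs the converse. That converse is not literally Proposition~\ref{prop:conjugacy}: that result concerns harmonic representatives, and $\widetilde\theta_x(b,v)$ with $v\in N$ need not solve the Hitchin equations.

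Both converses do hold. If $gH_vg^{-1}=H_{v'}$, then $gK_vg^{-1}$ is a maximal compact subgroup of $H_{v'}=K_{v'}\exp(i\Lie K_{v'})$, hence $H_{v'}$-conjugate to $K_{v'}$. The polar normalization $g_0(g_0^*g_0)^{-1/2}$ from the proof of Proposition~\ref{prop:conjugacy} then gives a determinant-one unitary conjugator, which lies in $K$ when $g_0\in H$. So your argument goes through, but the paper's label is more economical and removes the obstacle entirely.
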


\begin{proof}
For $w\in N$, polar decomposition writes every $h\in H_w$ as
$h=k\exp(i\xi)$.
The function $f(t)=\|\exp(it\xi)w\|^2$ satisfies $f'(0)=0$, $f(1)=f(0)$,
and
$$
 f''(t)=4\|i\xi\cdot(\exp(it\xi)\cdot w)\|^2\geq0.
$$
Convexity forces $\xi w=0$, and then $kw=w$.
Consequently
$$
\begin{gathered}
 H_w=K_w\exp(i\Lie K_w),\\
 \dim_\C H_w=\dim_\R K_w,\qquad
 |\pi_0(H_w)|=|\pi_0(K_w)|.
\end{gathered}
$$
The last equalities follow from uniqueness of polar factors.

We label $K_w$ by the $\cG_{k+1}^{\C}$-conjugacy class of $H_w$:
$$
 \operatorname{lab}(K_w)
   =[H_w]_{\cG_{k+1}^{\C}}.
$$
This label is invariant under $K$-conjugacy.
Equal labels give isomorphic embedded finite-dimensional complex Lie
groups, hence equal dimensions and component counts.
The polar decomposition therefore allows us to apply
Lemma~\ref{lem:compact-refinement}. Each $K$-orbit-type locus is relatively
open and closed in the corresponding label locus.
Every $H$-orbit-type locus is a union of these $K$-orbit-type loci and is
contained in one label locus, where it is also relatively open and closed.
Their connected refinements agree after restriction to any open subset.

For a closed-orbit representative $v$ over $y\in\mathcal W_x$,
\eqref{eq:stabilizer-comparison} identifies the complex-gauge stabilizer of
$\widetilde\theta_x(b,v)$ with $H_v$ for every $b$.
The representative $h\cdot v$ has stabilizer
$hH_vh^{-1}$.
Consequently, the pullback of each complex-gauge orbit-type locus has
the form $B_x\times L$, where $L\subset\mathcal W_x$ is the locus with the
corresponding label. Since $B_x$ is connected,
$$
 \Comp(B_x\times L)
   =\{B_x\times C:C\in\Comp(L)\}.
$$
Projection sends every connected subset of $B_x\times L$ into one
component of $L$, and each product $B_x\times C$ is connected.
This proves the relative partition formula.

Changing a polystable representative conjugates its stabilizer.
Changing a harmonic representative conjugates its compact stabilizer by
a smooth unitary gauge, by Section~\ref{sec:topology}.
For model representatives, smooth determinant-one gauge transformations
first produce harmonic representatives.  Proposition~\ref{prop:conjugacy}
then supplies
the smooth conjugator, and conjugating back gives the required map between
the original representatives.
This proves independence of the smooth or Sobolev presentation for the
model partitions as well.
On chart overlaps, both connected refinements are the connected
refinement of the same intrinsic type loci after restriction.
The equality is therefore compatible with all chart transitions.
\end{proof}

\subsection{Fixed subspaces in the local quotient}\label{sec:fixed-factors}
The polar decomposition gives $V_x^{K_x}=V_x^{H_x}$.
This fixed subspace is symplectic. Indeed, averaging over $K_x$ gives a
projection $\mathsf{Av}_x\colon V_x\to V_x^{H_x}$, and for
$u\in V_x^{H_x}$ invariance gives
$\Omega_{x,\C}(u,z)=\Omega_{x,\C}(u,\mathsf{Av}_x z)$.
Nondegeneracy of $\Omega_{x,\C}$ therefore implies nondegeneracy on
$V_x^{H_x}$.

Let $N_x=(V_x^{H_x})^{\Omega_{x,\C}}$.
The splitting of the symplectic $H_x$-representation $V_x$ into
$V_x^{H_x}$ and $N_x$ gives the quotient decomposition
\cite[Section~4.4 and Lemma~4.3]{Mayrand2020Stratification}
\begin{equation}\label{eq:fixed-product}
\begin{aligned}
 Y_x&=R_x\times V_x^{H_x},\\
 R_x&=\left(\Spec\C[(\nu_{x,\C}|_{N_x})^{-1}(0)_{\red}]^{H_x}\right)^{\an},\\
 q_x(f)&=(0_{R_x},f),
\end{aligned}
\end{equation}
where $0_{R_x}$ is the image of zero in the complementary quotient factor
$R_x$.
Thus $q_x|_{V_x^{H_x}}$ is a closed holomorphic embedding.
The fixed vectors lie in $Q_x$ and have singleton closed orbits.

$V_x^{H_x}$ parametrizes the intersection of the
orbit-type stratum through $x$ with a sufficiently small quotient chart.

\begin{prop}\label{prop:fixed-factor}
Every connected orbit-type stratum $\mathscr S$ is a locally closed complex
submanifold with its reduced induced structure, and
$p|_{\mathscr S}$ is a holomorphic submersion.
At $x\in\mathscr S$, after shrinking the quotient chart, there is a
connected open
ball $O_x\subset U_x\cap V_x^{H_x}$ about zero such that
$$
 \mathscr S\cap\mathscr U_x
   =\chi_x(B_x\times q_x(O_x)).
$$
This is a biholomorphic parametrization by $B_x\times O_x$, over $B_x$.
The vertical tangent is $\{0\}\times V_x^{H_x}$.
\end{prop}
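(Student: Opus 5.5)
The plan is to work entirely in one relative Kuranishi chart $\chi_x$ and reduce to the finite-dimensional quotient $Y_x$. Proposition~\ref{prop:types} identifies the connected refinement of the gauge partition on $\mathscr U_x$ with the sets $B_x\times C$, where $C$ runs through connected components of $H_x$-orbit-type loci in $\mathcal W_x$. It therefore suffices to show three things. First, near $q_x(0)$ the $H_x$-orbit-type locus of $0$ is exactly $q_x(V_x^{H_x})\cap\mathcal W_x$. Second, this set is a complex submanifold. Third, after transport through $\chi_x$ and the product with $B_x$, the projection $p$ becomes the first-factor projection.

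The first step is the identification of the orbit-type locus of $q_x(0)$. By \eqref{eq:fixed-product}, $q_x|_{V_x^{H_x}}$ is a closed holomorphic embedding onto $\{0_{R_x}\}\times V_x^{H_x}$, and every fixed vector $f$ lies in $Q_x$ and has stabilizer $H_x$. So these points have type $[H_x]$. For the converse, take $y$ near $q_x(0)$ with closed-orbit representative $v$, which we may choose in $N$ near $0$ using the homeomorphism $N/K\cong Y_x$. Lemma~\ref{lem:compact-refinement} (local containment) conjugates $K_v$ into $K_x$, and equality of type forces $H_v$ to be conjugate to all of $H_x$. By the inclusion argument in that lemma, $K_v=K_x$, hence $H_v=H_x$ and $v\in V_x^{H_x}$. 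This gives the equality on a small neighbourhood. We then shrink $\mathcal W_x$, and correspondingly the chart, so that $q_x^{-1}$ of the locus is a connected ball $O_x\subset U_x\cap V_x^{H_x}$ about zero. Since $B_x\times O_x$ is connected and is relatively open and closed in its type locus, it is the connected component, i.e.\ $\mathscr S\cap\mathscr U_x$.

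The second step is the submanifold structure. The map $(b,f)\mapsto\chi_x(b,q_x(f))$ is a composition of a closed embedding $B_x\times O_x\to B_x\times\mathcal W_x$ with the biholomorphism $\chi_x$. Hence its image is a locally closed complex submanifold, the reduced induced structure agrees with the manifold structure, and the parametrization is biholomorphic. Because $p\chi_x(b,y)=b$, the restriction $p|_{\mathscr S}$ is the projection $B_x\times O_x\to B_x$, so it is a submersion with vertical tangent $\{0\}\times V_x^{H_x}$.

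The hard step is making sure that $\mathscr S$ is a submanifold globally, not only locally near each point. Here I would point out that local closedness and the submanifold property are local conditions at points of $\mathscr S$. The only remaining worry is that another connected piece of the same type locus could accumulate in the chart. The local description rules this out, because every point of that type near $x$ already lies in $\chi_x(B_x\times q_x(O_x))$.
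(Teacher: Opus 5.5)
Your argument is correct and is essentially the paper's proof. A chart point with the gauge type of $x$ has model stabilizer $(H_x)_v\subseteq H_x$, and this stabilizer is $\cG_{k+1}^{\C}$-conjugate to $H_x$ by \eqref{eq:stabilizer-comparison}. Equal dimension and component count then force $(H_x)_v=H_x$, so the whole gauge type locus in the chart is $B_x\times q_x(\mathcal W_x\cap V_x^{H_x})$. This gauge-type statement, rather than the $H_x$-type statement alone, is what your final paragraph needs, and the product chart $R_x'\times O_x$ from \eqref{eq:fixed-product} then gives the closed embedding and the submersion.

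Your detour through the minimal-vector locus $N$ and local containment is harmless but unnecessary, since $(H_x)_v\subseteq H_x$ holds automatically. Also, where you write $q_x^{-1}$ of the locus you should write $(q_x|_{V_x^{H_x}})^{-1}$, because the full fibers of $q_x$ contain non-closed orbits.
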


\begin{proof}
Let $\mathscr L_x$ denote the complex-gauge orbit-type locus containing
$x$.
For a closed-orbit representative $v$ in the chart, we set
$J=(H_x)_v$.
This is a closed algebraic subgroup of $H_x$.
If the class of $\widetilde\theta_x(b,v)$ lies in $\mathscr L_x$,
\eqref{eq:stabilizer-comparison} gives a complex-gauge conjugacy between
$J$ and $H_x$.
Thus $\dim_\C J=\dim_\C H_x$ and their finite component counts agree.
Inclusion gives $\Lie J=\Lie H_x$, hence $J^0=H_x^0$.
Both component sets consist of cosets of that subgroup, so their equal
cardinalities imply $J=H_x$.
It follows that $v\in V_x^{H_x}$.
The converse follows directly from \eqref{eq:stabilizer-comparison}.
Therefore the locus in the chart with stabilizer $H_x$ is
$$
 B_x\times\bigl(\mathcal W_x\cap q_x(V_x^{H_x})\bigr).
$$

We choose an open neighborhood $R_x'\subset R_x$ of $0_{R_x}$ and set
$\mathcal W_x=R_x'\times O_x$ in \eqref{eq:fixed-product}, with $O_x$
and $B_x$ connected. The locus with stabilizer $H_x$ is then
$B_x\times\{0_{R_x}\}\times O_x$. It is connected, contains $x$, and is
closed and nonsingular in the chart, so it is contained in $\mathscr S$.
Conversely, $\mathscr S\cap\mathscr U_x$ lies in this locus, which proves
the displayed equality.

Recentering at every point gives the reduced complex manifold structure.
These structures agree because they are induced from the reduced analytic
structure of $\cM$.
The stratum is closed in the union of the corresponding neighborhoods,
so it is locally closed.
The projection is $B_x\times O_x\to B_x$, and its kernel is the tangent
bundle along the $O_x$-factor.
Proposition~\ref{prop:types} gives independence of all choices.
\end{proof}

\section{Whitney stratification}\label{sec:whitney}
Proposition~\ref{prop:fixed-factor} has established the complex manifold
structure of the orbit-type strata.
We study the connected components of their intersections with quotient
charts and prove Theorem~\ref{thm:main-strata}.

The finite-dimensional quotient in Definition~\ref{defn:model} has its
complex Whitney stratification by
\cite[Theorem~1.4(i), Section~4.2, and Proposition~4.10]
{Mayrand2020Stratification}.
The algebraic symplectic $H_x$-action is the holomorphic extension of
the $K_x$-action.
The same result applies to $N_x$ in \eqref{eq:fixed-product}.
Its subspace fixed by $H_x$ is zero, so the stratum through $0_{R_x}$ is
$\{0_{R_x}\}$.
The types and closed orbits of $(n,f)$ depend only on $n$.
Thus the connected model strata are $\mathscr R_\alpha\times V_x^{H_x}$, where
$\mathscr R_\alpha$ is a connected orbit-type stratum in $R_x$.

\begin{lem}\label{lem:local-neighborhoods}
At every $x$, the quotient chart can be chosen with
$\mathcal W_x=R_\epsilon\times O_x$, where $R_\epsilon$ is a neighborhood
of $0_{R_x}$ in $R_x$ and $B_x,O_x$ are connected.
The resulting product chart has finitely many connected local model strata
$$
 B_x\times(\mathscr R_\alpha\cap R_\epsilon)\times O_x.
$$
Each $\mathscr R_\alpha\cap R_\epsilon$ is nonempty and path connected.
The global stratum through $x$ meets the chart in
$B_x\times\{0_{R_x}\}\times O_x$, and the closure of every local model stratum
contains this whole intersection.
Such charts can be chosen arbitrarily small.
\end{lem}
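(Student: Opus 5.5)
The plan is to use the conical structure of the finite-dimensional model $R_x$ at its fixed point $0_{R_x}$. This structure comes from the Whitney stratification of \cite[Theorem~1.4(i), Proposition~4.10]{Mayrand2020Stratification}, together with the product decomposition \eqref{eq:fixed-product}. First, I will realize $R_x$ as a closed analytic subset of some $\C^n$, via invariant generators. The generators should be chosen homogeneous for the scalar $\C^*$-action on $N_x$. The moment map $\nu_{x,\C}|_{N_x}$ is quadratic, so its zero locus is a cone, and scaling commutes with $H_x$. Hence $\C^*$ (and in particular $\R_{>0}$) acts on $R_x$. This action preserves $H_x$-stabilizers of closed orbits, so it preserves each orbit-type stratum, and $0_{R_x}$ is its unique fixed point.

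Next, I will pick a weighted-homogeneous norm $\rho$ on $\C^n$ and set $R_\epsilon=\{\rho<\epsilon\}\cap R_x$. Finiteness of the strata follows from the fact that the model stratification of $R_x$ is algebraic: there are finitely many conjugacy classes of stabilizers, and each locus has finitely many connected components. Since every stratum is $\R_{>0}$-invariant and $0_{R_x}$ lies in the closure of every orbit under $t\to0$, each $\mathscr R_\alpha$ meets every $R_\epsilon$. Moreover, $\mathscr R_\alpha\cap R_\epsilon$ is path connected. To see this, join two points $a_0,a_1$ of $\mathscr R_\alpha\cap R_\epsilon$ by a path $\gamma$ in $\mathscr R_\alpha$; this is possible because $\mathscr R_\alpha$ is a connected manifold, hence path connected. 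Then rescale the path by $t\cdot\gamma(s)$, with $t$ small enough that $\rho(t\gamma)<\epsilon$. Finally, connect $a_i$ to $t\cdot a_i$ along the ray $t'\cdot a_i$, $t'\in[t,1]$. The ray stays in $R_\epsilon$ provided $\rho$ is monotone along rays, which holds for a weighted-homogeneous norm $\rho(t\cdot a)=t\rho(a)$.

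The same scaling argument gives the closure assertion. For $a\in\mathscr R_\alpha$ we have $t\cdot a\to0_{R_x}$, so $0_{R_x}\in\overline{\mathscr R_\alpha\cap R_\epsilon}$. Consequently $B_x\times\{0_{R_x}\}\times O_x$ lies in the closure of $B_x\times(\mathscr R_\alpha\cap R_\epsilon)\times O_x$ by the product structure. By Proposition~\ref{prop:fixed-factor}, the global stratum through $x$ meets the chart exactly in the locus with stabilizer $H_x$, and with $\mathcal W_x=R_\epsilon\times O_x$ this locus is $B_x\times\{0_{R_x}\}\times O_x$. By Proposition~\ref{prop:types}, the connected local strata in the chart are the connected components of $B_x\times\mathscr R_\alpha\times O_x$ restricted to the chart. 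These coincide with the displayed products because $B_x$, $O_x$, and $\mathscr R_\alpha\cap R_\epsilon$ are connected.

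It remains to make the chart arbitrarily small and compatible with \eqref{eq:saturated-domain}. We need $R_\epsilon\times O_x\subset\mathcal W_x$ for the Stein neighborhood supplied by the relative Kuranishi construction, with $O_x$ and $B_x$ shrunk to small balls. The sets $R_\epsilon$ form a neighborhood basis of $0_{R_x}$, because $\rho$ is proper on the closed embedding, so shrinking $\epsilon$, $O_x$ and $B_x$ suffices. Saturation is preserved because $q_x^{-1}$ of an open subset of $\mathcal W_x$ is still contained in the original saturated domain.

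The step I expect to be the main obstacle is verifying that the scaling action exists on the analytic quotient $R_x$ and is compatible with the chosen embedding. This is also needed to guarantee the monotonicity of $\rho$ along orbits used for path connectedness. I would handle it by using homogeneous generators of $\C[(\nu_{x,\C}|_{N_x})^{-1}(0)_\red]^{H_x}$. If this were unavailable, the fallback would be the conic structure theorem for Whitney stratified analytic germs, giving small balls whose intersections with each stratum are connected.
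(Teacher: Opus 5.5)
Your proposal is correct and follows essentially the same route as the paper. Both arguments embed $R_x$ by homogeneous invariant generators so that positive dilations act, and both use that the connected strata are dilation-invariant with $0_{R_x}$ in their closures. Both then take a dilation-stable neighborhood (your $\{\rho<\epsilon\}$ with, e.g., $\rho=\max_j|z_j|^{1/d_j}$ is the paper's $\{|z_j|<\epsilon_j\}$ with $\epsilon_j=\epsilon^{d_j}$), prove path connectedness by dilating a path and joining radially, and read off the stratum through $x$ from Proposition~\ref{prop:fixed-factor}.

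The one real difference is finiteness. You derive it from algebraicity: finitely many Luna orbit types, each locus with finitely many connected components. The paper instead combines local finiteness of the Whitney stratification at $0_{R_x}$ with the fact that every dilation-invariant stratum accumulates at $0_{R_x}$. Both arguments are valid.
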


\begin{proof}
Choose positive-degree homogeneous generators
$z_1,\ldots,z_{N_R}$ of the coordinate ring of $R_x$, with
degrees $d_1,\ldots,d_{N_R}$.
They embed $R_x$ as a closed affine subvariety of $\C^{N_R}$, with
$0_{R_x}=0$.
Scalar multiplication on $N_x$ descends to
$$
 z_j(d_ty)=t^{d_j}z_j(y),\qquad t>0,
$$
and extends continuously to $d_0y=0_{R_x}$.
For positive $t$, the map $d_t$ preserves closed orbits and full
stabilizers.
The path $t\mapsto d_ty$ stays in one type component.
Every orbit-type stratum $\mathscr R_\alpha\subset R_x$ is invariant under
these dilations and has $0_{R_x}$ in its closure.
Local finiteness at $0_{R_x}$ therefore implies that only finitely many
orbit-type strata occur in $R_x$.

For positive numbers $\epsilon_j$, we define
$$
 R_\epsilon=\{y\in R_x:|z_j(y)|<\epsilon_j\text{ for all }j\}.
$$
These neighborhoods can be arbitrarily small and are preserved by
$d_t$ for $0<t\leq1$.
Every compact subset is carried into $R_\epsilon$ by sufficiently small
positive $t$.
If $N_R=0$, we set $R_\epsilon=R_x=\{0_{R_x}\}$.
For each $\mathscr R_\alpha$, the intersection
$\mathscr R_\alpha\cap R_\epsilon$ is nonempty and path connected.
Indeed, a path in the connected complex manifold $\mathscr R_\alpha$
joins any two of its points.  A sufficiently small dilation carries that
compact path into $R_\epsilon$, and radial paths join its dilated endpoints
to the original endpoints.  All three paths lie in
$\mathscr R_\alpha\cap R_\epsilon$.

We choose a chart with
$\mathcal W_x=R_\epsilon\times O_x$ and connected $B_x,O_x$.
Its connected local model strata are
\begin{equation}\label{eq:local-strata}
 B_x\times(\mathscr R_\alpha\cap R_\epsilon)\times O_x .
\end{equation}
The global stratum $\mathscr S_x$ through $x$ meets this chart in
$B_x\times\{0_{R_x}\}\times O_x$, by Proposition~\ref{prop:fixed-factor}.
The closure of every stratum in \eqref{eq:local-strata} contains this
intersection.  For fixed points in $B_x$ and $O_x$, the dilation
$d_t$ sends the $R_x$-coordinate to $0_{R_x}$ as $t\to0$.
\end{proof}

\begin{proof}[Proof of Theorem~\ref{thm:main-strata}]
We use the charts from Lemma~\ref{lem:local-neighborhoods}.
Proposition~\ref{prop:types} identifies \eqref{eq:local-strata} with the
connected components of the intrinsic orbit-type loci restricted to the
chart.
For any partition $\mathcal P$ and open $U$,
$$
 \Comp(\Comp(\mathcal P)|_U)=\Comp(\mathcal P|_U).
$$
A connected component of a locus intersected with $U$ lies in one global
connected component of the locus; maximality of connected components
gives the equality in both directions.
Each global stratum meeting the chart contains at least one of the
finitely many local strata in \eqref{eq:local-strata}.
This proves local finiteness of the intrinsic global partition.

The frontier criterion
\cite[Lemma~4.6]{Mayrand2020Stratification}
applies directly to this partition.
At every $x$, the stratum through $x$ has connected intersection with the
chosen neighborhood, and every connected component of a restricted
stratum has that intersection in its closure.
Since the global strata are connected by definition, the criterion gives
the stated global frontier property.

The finite partition \eqref{eq:local-strata} satisfies Whitney
\textup{(a)} and \textup{(b)}.  Along a stratum
$\mathscr R_\alpha\subset R_x$, at
$(b,y,f)\in B_x\times\mathscr R_\alpha\times O_x$,
$$
 T_{(b,y,f)}
   \bigl(B_x\times\mathscr R_\alpha\times O_x\bigr)
 =T_bB_x\oplus T_y\mathscr R_\alpha\oplus T_fO_x.
$$
For a sequence $(b_n,y_n,f_n)\to(b,y,f)$, if
$T_{y_n}\mathscr R_\alpha\to T_\infty^R$, then the corresponding product
tangent spaces converge to
$T_bB_x\oplus T_\infty^R\oplus T_fO_x$.  The $R_x$-component of a
limiting secant is zero or a scalar multiple of an $R_x$ secant limit,
and Whitney \textup{(b)} places that component in $T_\infty^R$.  Its
other components lie in $T_bB_x\oplus T_fO_x$.  The same direct-sum
identity gives Whitney \textup{(a)}.
The conditions are invariant under changes of analytic embeddings
\cite[Definition~2.7 and Section~4.7]{Mayrand2020Stratification}.

Now let $x\in\mathscr S\cap\overline{\mathscr R}$ for global strata
$\mathscr S$ and $\mathscr R$.  The stratum through $x$ in a chart
centered at $x$ is the product
$B_x\times\{0_{R_x}\}\times O_x$.  For any convergent tangent and secant
sequences in $\mathscr R$, finiteness of the local partition provides a
subsequence contained in one stratum of
\eqref{eq:local-strata}, with unchanged limits.
That local stratum is a connected component of the open subset
$\mathscr R\cap\mathscr U_x$ of the manifold $\mathscr R$, so their tangent
spaces agree.
The product Whitney conditions prove the conditions for the original
limits.
Proposition~\ref{prop:fixed-factor} proves smoothness and the submersion
property.  Proposition~\ref{prop:types} proves independence of
representatives and compatibility on overlaps.
After restriction and connected refinement, the partition in every
quotient product chart is the product of $B_x$ with the connected
$H_x$-orbit-type partition of $Y_x$.
\end{proof}

\section{Harmonic representatives and the Hitchin--Kobayashi correspondence}
\label{sec:harmonic}
We construct harmonic representatives on the $H_x$-fixed charts of
Proposition~\ref{prop:fixed-factor}.
The endomorphism algebra remains constant along these families and gives
the rank formulas required for parameter-dependent Hodge theory.
Determinant-preserving unitary gauge transformations compare the harmonic
families on chart overlaps and complete the descent in
Theorem~\ref{thm:main-harmonic}.

\subsection{Harmonic lifts and stable factors}
We fix a stratum $\mathscr S$, a point $x\in\mathscr S$, and the
$H_x$-fixed ball $O_x\subset V_x^{H_x}$ from
Proposition~\ref{prop:fixed-factor}.
Set $b_x=p(x)$ and $u_x=(b_x,0)\in B_x\times O_x$.
For the Kuranishi family $\theta_x(b,v)$ on $B_x\times O_x$, let
$\mathbf c_x(b,v)=(\nabla_{h,b,v},\varphi_{b,v})$ be the pair consisting of the
Chern connection determined by the fixed metric $h$ and the corresponding
Higgs field.
The right gauge action and left model action satisfy
$$
 (c^g)^h=c^{gh},\qquad
 \theta_x(b,h\cdot v)=\theta_x(b,v)^{h^{-1}},
$$
as in \eqref{eq:relative-equivariance}.

\begin{prop}\label{prop:harmonic-lifts}
After shrinking $B_x\times O_x$, there is a positive determinant-one
complex gauge transformation $g_x^+(u)$, where $u=(b,v)$, that commutes
with $H_x$.  The pair
$$
 \widehat c_x(u)=\mathbf c_x(u)^{g_x^+(u)}
$$
solves the Hitchin equations on $X_b=(\Sigma,j_b)$.
Its full complex and compact stabilizers are exactly $H_x$ and $K_x$.
In the fixed smooth trivialization, the coefficients of
$\theta_x,g_x^+,\widehat c_x$ and the harmonic metric
$$
 h_u(s,t)=h((g_x^+(u))^{-1}s,(g_x^+(u))^{-1}t)
$$
are smooth on $B_x\times O_x\times\Sigma$.
At $u_x=(b_x,0)$, the vertical differential of $\widehat c_x$ is the
inclusion $V_x^{H_x}\hookrightarrow V_x$, after identifying a connection
variation with its $(0,1)$ part.
\end{prop}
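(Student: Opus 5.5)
I plan to construct $g_x^+$ by the implicit function theorem applied to the Hitchin moment-map equation, restricted throughout to the $H_x$-commuting subgroup. Write $\mathcal C_x$ for the centralizer of $H_x$ (equivalently of $K_x$, by polar decomposition) in the determinant-one Hermitian $W^{k+1,2}$ endomorphisms. For $u=(b,v)\in B_x\times O_x$ and a Hermitian $\xi\in\mathcal C_x$ that is trace-free and $h$-self-adjoint, put $g=\exp(\xi)$. Define
$$
 \mathcal F(u,\xi)=\Lambda_b\bigl(F_{\nabla^{g}}+[\varphi^{g},(\varphi^{g})^{*_h}]\bigr),
$$
where $\nabla^g$ and $\varphi^g$ come from $\mathbf c_x(u)^{g}$. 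By the relative equivariance \eqref{eq:relative-equivariance}, and because $v\in V_x^{H_x}$, the family $\theta_x(b,v)$ is $H_x$-invariant. Since $A_x$ is $H_x$-equivariant and $K_x$ is unitary for $h$, the map $\mathcal F(u,\cdot)$ sends $\mathcal C_x$ into the $K_x$-invariant trace-free Hermitian $W^{k-1,2}$ sections. Smooth dependence on $u$ follows from the smoothness of $A_x$ and its $W^{m,2}$-holomorphy stated in Section~\ref{sec:relative-families}, together with the smooth dependence of $\Lambda_b$ and the Kähler metric on the Beltrami parameter.

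The linearization at $(u_x,0)$ is the Laplacian $\xi\mapsto\Delta_{0,x}\xi$, up to a positive constant, acting on Hermitian sections. Its kernel is the Hermitian part $i\fk_x$ of $\cH_x^0=\fh_x$. This kernel is nonzero, so I will not apply the implicit function theorem directly. I will split off the parallel part: restrict $\xi$ to the $L^2$-orthogonal complement of $\mathcal A_x$ inside $\mathcal C_x$, and project the target onto the orthogonal complement of $\mathcal A_x$. On these complements the linearization is an isomorphism, by the Green operator $G_{0,x}$ and its commutation with $H_x$ from Section~\ref{sec:stabilizers}.

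The main obstacle is showing that the $\mathcal A_x$-component of $\mathcal F$ vanishes automatically. Here I pair $\mathcal F(u,\xi)$ with a parallel $\zeta\in\mathcal A_x\cap\mathcal C_x$, that is, with an element of the center of $\mathcal A_x$, since it commutes with $H_x$. The point is that $\zeta$ stays holomorphic and commutes with $\varphi$ along the whole family. This holds because the endomorphism algebra of $\theta_x(b,v)$ contains $\mathcal A_x$: its stabilizer is $H_x$ by \eqref{eq:stabilizer-comparison}, and the determinant-one condition only removes scalars. Once that is in place, the Chern–Weil integral
$$
 \int\tr\bigl(\zeta\,\Lambda(F+[\varphi,\varphi^*])\bigr)
$$
equals a degree-type invariant. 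This invariant vanishes for each isotypic summand $E_\alpha\otimes V_\alpha$, which keeps degree zero along the family by continuity of degree, and also because each summand is a direct summand of $\theta_x(b,v)$ preserved by $\mathcal A_x$. The implicit function theorem then gives a unique small $\xi(u)$, and I set $g_x^+(u)=\exp\xi(u)$. This element is positive, has determinant one (since $\xi$ is trace-free), and commutes with $H_x$.

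Next I prove smoothness. Elliptic regularity of the Hitchin equation for each $u$, applied uniformly in $u$, shows that the coefficients of $\widehat c_x$ and $g_x^+$ are smooth in $\Sigma$. Smoothness of $\xi(u)$ in $W^{m,2}$ holds for every $m$, by running the implicit function theorem at every Sobolev level and using uniqueness. Sobolev embedding then gives joint smoothness on $B_x\times O_x\times\Sigma$, and smoothness of $h_u$ follows. For the stabilizers, the complex-gauge stabilizer is conjugated by $g_x^+$. Since $g_x^+$ commutes with $H_x$ and the stabilizer of $\theta_x(u)$ is $H_x$, the stabilizer of $\widehat c_x(u)$ is also $H_x$. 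For a harmonic pair the compact stabilizer is $H_x\cap\cG=K_x$.

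Finally, the vertical differential. At $u_x$ we have $\xi=0$, and differentiating $\mathcal F(u,\xi(u))=0$ gives $\Delta_{0,x}\,d\xi = -\partial_v\mathcal F$. Since $dA_x=\Id$ on $V_x$ with values harmonic and $D_{0,x}^*$-closed, this term is the real moment-map derivative of a harmonic class, which is zero. Thus $d\xi(u_x)|_{V}=0$. It follows that the vertical differential of $\widehat c_x$ equals $dA_x|_{V_x^{H_x}}$, which is the inclusion, after identifying the connection variation with its $(0,1)$ part.
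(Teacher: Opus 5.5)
Your proposal is correct, but it handles the key step differently from the paper.

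\textbf{How the paper proceeds.} The paper does not set up a new implicit-function problem. It takes the real moment-map correction $g_x^+=\exp(-i\star_b\beta_x)$, already constructed on all of $B_x\times U_x$ in \cite[Proposition~3.1]{CaoUniversalPolystable}. The residual $\nu_{x,\R}(b,v)$ of that correction lies in the finite-dimensional space $\star_b\fk_x$. The paper kills this residual symplectically. Paired with $\fk_x$, it is a moment map $\ell_b$ for the pulled-back $L^2$ form, with $d_v\langle\ell_b,\xi\rangle=\iota_{[\xi,v]}\omega_b$ and $\ell_b(0)=0$. When $v\in V_x^{H_x}$, every fundamental vector field vanishes along the ray $tv$, so integrating gives $\ell_b(v)=0$. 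Commutation with $H_x$ then comes from $K_x$-equivariance of the correction at fixed parameters. Smoothness and $d_v\beta_x|_{u_x}=0$ are inherited from the cited construction.

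\textbf{How you proceed.} You perform an equivariant Lyapunov--Schmidt reduction inside the centralizer of $H_x$. Positivity, determinant one and commutation with $H_x$ are therefore built in. The cokernel shrinks from $i\fk_x$ to the Hermitian trace-free center of $\mathcal A_x$, of dimension $s-1$. You then kill this residual by Chern--Weil: pairing with the central idempotents computes the degrees of the isotypic summands, and these are topologically fixed at zero.

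\textbf{Comparison.} Your route is self-contained. It explains the vanishing of the obstruction algebraically, as degree zero of each isotypic summand, in the spirit of Proposition~\ref{prop:stable-decomposition}. The paper's route reuses a correction defined off the fixed locus and disposes of the whole $\fk_x$-obstruction at once through the moment-map identity.

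\textbf{Two points to tighten.}
\begin{enumerate}
\item In the Chern--Weil step, say explicitly that $\zeta$ stays holomorphic for $\theta_x(u)^{\exp\xi}$ because $\exp\xi$ commutes with $\mathcal A_x$. Also note that the central idempotents are $h$-self-adjoint, so they are Chern-parallel and the connection splits. (Alternatively, $\int\tr(\zeta F)$ is metric independent for any holomorphic $\zeta$.) Finally, $\tr(\zeta[\varphi,\varphi^{*}])=0$ because $\zeta$ commutes with $\varphi$.
\item For smoothness in every $W^{m,2}$, do not rely on a single implicit-function neighborhood of $u_x$ for each $m$, since these could shrink as $m$ grows. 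Instead rerun the implicit function theorem locally at each point $(u_0,\xi(u_0))$. Use elliptic regularity to transfer invertibility of the projected linearization from level $k$ to level $m$.
\end{enumerate}
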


\begin{proof}
We choose metrics compatible with $j_b$ and a fixed area form, and denote
their Hodge star by $\star_b$.  These metrics vary smoothly with $b$. On zero-forms and two-forms,
the stars agree because the area form is fixed.
The real moment-map correction in
\cite[Proposition~3.1]{CaoUniversalPolystable}
gives a smooth map $\beta_x(b,v)$ and the positive
determinant-one gauge
$$
 g_x^+(b,v)=
 \exp\bigl(-i\star_b\beta_x(b,v)\bigr).
$$
The map $g_x^+$ is $K_x$-equivariant, and its coefficients are smooth
on $B_x\times O_x\times\Sigma$.
For
$$
 \boldsymbol\mu_{\R}(\nabla,\varphi)
       =F_\nabla+[\varphi,\varphi^{*h}],
$$
define the remaining real moment-map term by
$$
 \nu_{x,\R}(b,v)
   :=\boldsymbol\mu_{\R}(\widehat c_x(b,v)).
$$
The correction equation places $\nu_{x,\R}(b,v)$ in the fixed space
$\star_b\fk_x$.
Let $\omega_b$ be the pullback of the configuration-space $L^2$
K\"ahler form along $v\mapsto\widehat c_x(b,v)$. It is a closed real
two-form. As in \cite[Section~3.2]{CaoUniversalPolystable}, set
$$
 \langle\ell_b(v),\xi\rangle
   =-\int_\Sigma\tr(\xi\nu_{x,\R}(b,v)),\qquad \xi\in\fk_x.
$$
The right gauge action, the left model action, and
\cite[Lemma~3.2]{CaoUniversalPolystable} give
\begin{equation}\label{eq:fixed-moment-identity}
 d_v\langle\ell_b,\xi\rangle
   =\iota_{[\xi,v]}\omega_b,
 \qquad \ell_b(0)=0.
\end{equation}
The trace pairing between $\star_b\fk_x$ and $\fk_x$ is nondegenerate.
On the reduced equation locus $Z_x$, the corrected pair solves the
Hitchin equations exactly when $\nu_{x,\R}$ vanishes.
The correction construction also gives
$d_v\beta_x|_{u_x}=0$.

For $\xi\in\fk_x$ and $v\in O_x$, the vector $tv$ is fixed by $\xi$.
Equation~\eqref{eq:fixed-moment-identity} therefore gives
$$
 \langle\ell_b(v)-\ell_b(0),\xi\rangle
 =\int_0^1\omega_b|_{tv}([\xi,tv],v)\,dt=0.
$$
Thus $\ell_b(v)=0$, and nondegeneracy of the trace pairing gives
$\nu_{x,\R}(b,v)=0$.
Moreover $V_x^{H_x}\subset Q_x$ by \eqref{eq:moment}.
Thus $\widehat c_x$ is a solution for every $(b,v)\in B_x\times O_x$.

Equivariance at a $K_x$-fixed parameter makes $g_x^+$ commute with $K_x$.
It then commutes with $H_x=K_x\exp(i\fk_x)$.
For the right action, the stabilizer transforms by inverse conjugation.
The identity \eqref{eq:stabilizer-comparison}, restricted to the common
domain of $\theta_x$ and $\widetilde\theta_x$, therefore gives
$$
 \Stab_{\cG_{k+1}^{\C}}(\widehat c_x)
      =(g_x^+)^{-1}H_xg_x^+=H_x.
$$
Intersection with the unitary group gives $K_x$.
Transporting $h$ gives the displayed harmonic metric.
The asserted smoothness follows from the correction construction, the
regularity of $A_x$, and smooth inversion.
The vertical Kuranishi differential
$d_vA_x|_{u_x}$ is the harmonic inclusion, and the correction has
zero vertical differential at $u_x$.
This proves the tangent assertion.
\end{proof}

We first identify the endomorphism algebra throughout the family.

\begin{lem}\label{lem:constant-endomorphism-algebra}
Let
$$
 \mathcal A_u=
 \{e\in H^0(X_b,\End E_u):[\varphi_u,e]=0\}
$$
be the algebra of Higgs endomorphisms of $\theta_x(u)$.
Under the fixed smooth trivialization, one has
$$
 \mathcal A_u=\mathcal A_x
     =\C\Id+\Lie H_x=\operatorname{span}_{\C}H_x
$$
for every $u\in B_x\times O_x$.
\end{lem}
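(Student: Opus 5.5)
The plan is to squeeze $\mathcal A_u$ between two descriptions of the same finite-dimensional algebra, $\operatorname{span}_\C H_x$. The lower bound comes from the stabilizer comparison \eqref{eq:stabilizer-comparison}. The upper bound comes from the observation that a finite-dimensional unital algebra is spanned by its units, and every unit is a scalar multiple of a determinant-one unit.

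\emph{Step 1: the identities at $x$.} Using Notation~\ref{defn:parallel-algebra}, $\mathcal A_x=\bigoplus_\alpha\Id_{E_\alpha}\otimes\End(V_\alpha)$. The description of $H_x$ as a kernel of $\prod_\alpha\det(a_\alpha)^{d_\alpha}$ gives
$$
\Lie H_x=\Bigl\{(X_\alpha)_\alpha:\ \textstyle\sum_\alpha d_\alpha\tr X_\alpha=0\Bigr\},
$$
which is the trace-zero part of $\mathcal A_x$. Hence $\C\Id+\Lie H_x=\mathcal A_x$. The inclusion $\operatorname{span}_\C H_x\subseteq\mathcal A_x$ is clear. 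The reverse inclusion follows from the density argument of Step 3, applied at $u=u_x$, where $\theta_x(u_x)$ is the base object itself.

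\emph{Step 2: the lower bound $\operatorname{span}_\C H_x\subseteq\mathcal A_u$.} For $u=(b,v)$ with $v\in O_x\subset V_x^{H_x}$, the identity \eqref{eq:stabilizer-comparison} on the original Kuranishi domain gives $\Stab_{\cG^\C_{k+1}}\theta_x(b,v)=(H_x)_v=H_x$. A gauge stabilizer of the pair $(\bar\partial_{E_u},\varphi_u)$ is exactly a determinant-one holomorphic endomorphism commuting with $\varphi_u$. So $H_x\subseteq\mathcal A_u$, and the inclusion passes to spans.

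\emph{Step 3: the upper bound $\mathcal A_u\subseteq\operatorname{span}_\C H_x$.} Elements of $\mathcal A_u$ solve an elliptic equation, so they are smooth. Since $\mathcal A_u$ consists of holomorphic sections on a compact curve, it is a finite-dimensional unital algebra. For $e\in\mathcal A_u$, the function $\det e$ is a global holomorphic function on $X_b$, hence constant. If $e$ is invertible, choose $\lambda\in\C^*$ with $\lambda^r=\det e$. Then $\lambda^{-1}e$ is a determinant-one automorphism of $(E_u,\varphi_u)$. By Step 2 it lies in the gauge stabilizer, which equals $H_x$, so $e\in\C^*H_x$.

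The units of $\mathcal A_u$ are the complement of the zero set of the polynomial $e\mapsto\det(L_e)$, where $L_e$ is left multiplication by $e$. This polynomial does not vanish at $\Id$, so the units are Zariski dense. Hence they span $\mathcal A_u$, and $\mathcal A_u\subseteq\operatorname{span}_\C H_x$. The same argument at $u_x$ completes Step 1.

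I expect the main obstacle to be bookkeeping rather than any deep point. The stabilizer identity must be applied for every $u$ in the possibly shrunk chart $B_x\times O_x$, and it is stated on the original Kuranishi domain. One must also check that "full determinant-one $W^{k+1,2}$ gauge stabilizer" coincides with the determinant-one units of $\mathcal A_u$ in the fixed smooth trivialization. That is, holomorphic endomorphisms of $\theta_x(u)$ commuting with $\varphi_u$ are automatically $W^{k+1,2}$, indeed smooth, and determinant one is measured against the same $\tau$ throughout. Elliptic regularity of $\bar\partial_{E_u}$, together with the smoothness of the coefficients of $\theta_x$ from Section~\ref{sec:relative-families}, should settle this.
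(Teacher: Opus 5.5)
Your proof is correct and follows the paper's sandwich argument. The stabilizer comparison \eqref{eq:stabilizer-comparison} puts $H_x$ inside every $\mathcal A_u$ and identifies the determinant-one Higgs automorphisms of $\theta_x(u)$ with $H_x$. The routes differ only in the upper bound: you rescale units of $\mathcal A_u$ into $H_x$ and use that units span (the paper uses this only at the base point, via $e=(e-\lambda\Id)+\lambda\Id$), while the paper exponentiates the trace-free part $e_0$ to obtain $e_0\in\Lie H_x$ directly; you also read off $\C\Id+\Lie H_x=\mathcal A_x$ from the block description rather than from that exponential argument.
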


\begin{proof}
Every invertible element $s\in\mathcal A_x$ is parallel, so $\det s$ is
a nonzero constant on the connected curve.  A scalar multiple of $s$
therefore has determinant one and belongs to $H_x$.
For any $e\in\mathcal A_x$, one may choose
$\lambda\in\C^*$ outside the finite spectrum of $e$; then
$$
 e=(e-\lambda\Id)+\lambda\Id
$$
is a sum of two invertible elements.  Hence
$\operatorname{span}_{\C}H_x=\mathcal A_x$.

Now let $e\in\mathcal A_u$.  Its trace is a holomorphic function on the
compact connected curve $X_b$ and is therefore constant.  The trace-free
endomorphism
$$
 e_0=e-\frac{\tr e}{r}\Id
$$
satisfies $\exp(te_0)\in H_x$ for every $t\in\C$, because
\eqref{eq:stabilizer-comparison} identifies the full determinant-one
automorphism group with $H_x$.  Thus $e_0\in\Lie H_x$ and
$\mathcal A_u\subseteq\C\Id+\Lie H_x$.
The same argument at the base point gives
$\mathcal A_x=\C\Id+\Lie H_x$.
Conversely, $H_x$ acts by Higgs automorphisms on every member of the
family over $B_x\times O_x$, so
$\mathcal A_x=\operatorname{span}_{\C}H_x\subseteq\mathcal A_u$.
These two inclusions prove the lemma.
\end{proof}

For each matrix block of $\mathcal A_x$, we fix standard matrix units
$e_{\alpha,ij}$ satisfying
$$
 e_{\alpha,ij}e_{\beta,pq}
   =\delta_{\alpha\beta}\delta_{jp}e_{\alpha,iq},
 \qquad \sum_{\alpha,i}e_{\alpha,ii}=\Id.
$$

\begin{prop}\label{prop:stable-decomposition}
The images of the matrix units define holomorphic Higgs subfamilies
$E_{\alpha,u}$ and decompositions
$E_u=\bigoplus_\alpha E_{\alpha,u}\otimes V_\alpha$, with
$$
 \varphi_u=\bigoplus_\alpha
       \varphi_{\alpha,u}\otimes\Id_{V_\alpha}.
$$
The factors are stable of degree zero and pairwise nonisomorphic.
The harmonic metric has the block form
$\bigoplus_\alpha h_{\alpha,u}\otimes h_{V_\alpha}$, where the
multiplicity metrics $h_{V_\alpha}$ are fixed.  The determinant and trace
conditions are
$$
 \bigotimes_\alpha(\det E_{\alpha,u})^{m_\alpha}
          \otimes(\det V_\alpha)^{d_\alpha}\simeq\cO
       \quad\text{through }\tau,\qquad
 \sum_\alpha m_\alpha\tr(\varphi_{\alpha,u})=0.
$$
Individual factor determinants and traces may vary.
\end{prop}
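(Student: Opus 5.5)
The plan is to use Lemma~\ref{lem:constant-endomorphism-algebra}, which says the algebra $\mathcal A_u$ is independent of $u$. Then the fixed matrix units $e_{\alpha,ij}\in\mathcal A_x=\mathcal A_u$ are holomorphic Higgs endomorphisms of $\theta_x(u)$ for every $u\in B_x\times O_x$, and they have smooth coefficients on $B_x\times O_x\times\Sigma$.

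First I build the subfamilies and the splitting. Put $E_{\alpha,u}=\im e_{\alpha,11}$. Since $e_{\alpha,11}$ is an idempotent commuting with $\bar\partial_{E_u}$ and $\varphi_u$, its image is a holomorphic $\varphi_u$-invariant subbundle of constant rank $d_\alpha$. The rank is constant because the idempotent is parallel and fixed in the smooth trivialization. The maps $e_{\alpha,i1}$ identify $E_{\alpha,u}$ with $\im e_{\alpha,ii}$. Summing over $i$ and $\alpha$ then gives $E_u\cong\bigoplus_\alpha E_{\alpha,u}\otimes V_\alpha$. Because $\varphi_u$ commutes with every matrix unit, it takes the form $\bigoplus\varphi_{\alpha,u}\otimes\Id_{V_\alpha}$. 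The whole construction is holomorphic in the family direction, since the idempotents are constant.

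Next I prove stability and that the factors are pairwise nonisomorphic. The family $\theta_x(u)$ is polystable, because $u$ lies in the $H_x$-fixed locus, which has singleton closed orbits (Section~\ref{sec:fixed-factors}). So it splits as a direct sum of stable degree-zero factors, and its endomorphism algebra is $\bigoplus_\beta\End(W_\beta)$, with $W_\beta$ the multiplicity spaces. Compare this with the structure $\bigoplus_\alpha M_{m_\alpha}(\C)$, which is $\mathcal A_u$ by the Lemma. The Wedderburn decomposition is unique, and minimal idempotents of a block correspond to indecomposable summands. Hence $\im e_{\alpha,11}$ is a single stable factor, and different $\alpha$ give nonisomorphic factors, since otherwise $\mathcal A_u$ would contain off-block maps. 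Each $E_{\alpha,u}$ is a direct summand of a polystable degree-zero object, so its slope is zero.

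For the metric I use Proposition~\ref{prop:harmonic-lifts}. There $g_x^+(u)$ commutes with $H_x$ and hence with $\operatorname{span}H_x=\mathcal A_x$. So $h_u$ is again invariant under $K_x$, which contains $\prod_\alpha U(V_\alpha)$ up to the determinant condition. An $\mathcal A_x$-compatible Hermitian form that is $K_x$-invariant makes the matrix-unit decomposition orthogonal. Schur's lemma on each $V_\alpha$ then gives the form $h_{\alpha,u}\otimes h_{V_\alpha}$, with $h_{V_\alpha}$ inherited from $h$ and therefore fixed.

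Finally I compute the determinant and trace. We have $\det(E_\alpha\otimes V_\alpha)=(\det E_\alpha)^{m_\alpha}\otimes(\det V_\alpha)^{d_\alpha}$, and $\tau$ transports this to $\cO$. We also have $\tr\varphi_u=\sum_\alpha m_\alpha\tr\varphi_{\alpha,u}=0$.

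The main obstacle is the stability and nonisomorphism step. I have to make sure that polystability holds on all of $B_x\times O_x$, not only at the base point. I also have to make sure that the full automorphism group of the graded object is exactly $H_x$, as stated in \eqref{eq:stabilizer-comparison}, so that no extra factor isomorphisms appear.
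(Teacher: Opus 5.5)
Your proposal is correct and follows the paper's proof essentially step by step. Lemma~\ref{lem:constant-endomorphism-algebra} makes the matrix units holomorphic Higgs endomorphisms of the whole family. Polystability together with the fixed block structure of $\mathcal A_u$ then gives stable, pairwise nonisomorphic factors; your primitive-idempotent phrasing is the paper's pair of identities $e_{\alpha,11}\mathcal A_ue_{\alpha,11}=\C e_{\alpha,11}$ and $e_{\alpha,11}\mathcal A_ue_{\beta,11}=0$. Finally, commutation of $g_x^+$ with $\mathcal A_x$ gives the block metric.

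The differences are minor. You justify polystability at every parameter explicitly via the singleton closed orbits on $V_x^{H_x}$, which the paper only asserts. For the metric you detour through $K_x$-invariance and Schur's lemma, whereas the paper reads the block form off directly from $g_x^+$ lying in the commutant $\bigoplus_\alpha\End(E_{\alpha,u})\otimes\Id_{V_\alpha}$.
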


\begin{proof}
The equivariance of $\theta_x$ makes every element of $H_x$ a
holomorphic automorphism of the family over $B_x\times O_x$.  Lemma
\ref{lem:constant-endomorphism-algebra} gives
$\mathcal A_x=\operatorname{span}_{\C}H_x$, so each
$e_{\alpha,ij}$ is a holomorphic endomorphism of the total family.  In
particular,
$$
 \bar\partial_{\Hom}e_{\alpha,ij}=0,
 \qquad [\varphi_u,e_{\alpha,ij}]=0.
$$
Each idempotent $e_{\alpha,11}$ has constant rank;
its image is therefore a holomorphic subbundle.  The maps
$e_{\alpha,i1}$ identify the equivalent summands and give
$E_u=\bigoplus_\alpha E_{\alpha,u}\otimes V_\alpha$.

At every parameter the total object is polystable of slope zero.
A morphism between stable Higgs bundles of equal slope is zero or an
isomorphism, by stability applied to its kernel and saturated image.
Every endomorphism of a stable Higgs bundle is scalar. Its
characteristic coefficients are constant; subtracting an eigenvalue
times the identity gives a noninvertible endomorphism, which is zero by
the preceding argument.
Thus an idempotent image of a polystable object is polystable of slope
zero, and its endomorphism algebra is a product of multiplicity matrix
algebras.
Since
$e_{\alpha,11}\mathcal A_ue_{\alpha,11}=\C e_{\alpha,11}$,
the factor $E_{\alpha,u}$ is stable. For $\alpha\ne\beta$, the equality
$e_{\alpha,11}\mathcal A_ue_{\beta,11}=0$ shows that there are no nonzero
Higgs morphisms from $E_{\beta,u}$ to $E_{\alpha,u}$. The factors are
therefore pairwise nonisomorphic.

The fiberwise commutant of $\mathcal A_x$ at $u$ is
$\bigoplus_\alpha\End(E_{\alpha,u})\otimes\Id_{V_\alpha}$.
Since $g_x^+$ commutes with $\mathcal A_x$, it is block diagonal with
respect to the stable isotypic decomposition.  The transported harmonic
metric consequently has the block form stated in the proposition.
The harmonic equation restricts to each block.
The determinant trivialization and the trace-free condition give the two
displayed identities.
\end{proof}

\begin{cor}\label{cor:normalizer-blocks}
Every determinant-preserving Higgs-bundle isomorphism between two
representatives whose full stabilizer equals $H_x$ normalizes
$\mathcal A_x$.  It permutes the stable isotypic blocks and preserves
the pairs $(d_\alpha,m_\alpha)$.
\end{cor}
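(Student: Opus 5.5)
Let $\Phi\colon\theta_x(u)\to\theta_x(u')$ be a determinant-preserving Higgs isomorphism. Both objects have full stabilizer equal to $H_x$ in the fixed smooth trivialization, by \eqref{eq:stabilizer-comparison} and Proposition~\ref{prop:harmonic-lifts}. The plan is to show, in order, that $\Phi$ normalizes $H_x$, that it therefore normalizes $\mathcal A_x$, and finally to read off the block permutation from the matrix-algebra structure of $\mathcal A_x$.

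\textbf{Normalizing $H_x$.} Conjugation by $\Phi$ carries the full determinant-one automorphism group of the source onto that of the target:
$$
 \Phi\,\Stab(\theta_x(u))\,\Phi^{-1}=\Stab(\theta_x(u')).
$$
Because both stabilizers equal $H_x$, this gives $\Phi H_x\Phi^{-1}=H_x$. Here $\Phi$ is a gauge transformation between two points of the configuration space, so the relation is an identity of subgroups of $\cG^{\C}_{k+1}$; with the right action it reads $\theta_x(u')=\theta_x(u)^{\Phi^{-1}}$. The passage to $\mathcal A_x$ is linear: by Lemma~\ref{lem:constant-endomorphism-algebra}, $\mathcal A_x=\operatorname{span}_\C H_x$, and conjugation is linear, so $\Phi\mathcal A_x\Phi^{-1}=\mathcal A_x$. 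One can argue more directly as well. By the same lemma, $\mathcal A_u=\mathcal A_{u'}=\mathcal A_x$, and an isomorphism of Higgs bundles conjugates the algebra of Higgs endomorphisms of the source onto that of the target.

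\textbf{Permuting blocks.} Conjugation by $\Phi$ is an algebra automorphism of
$$
 \mathcal A_x\cong\bigoplus_\alpha\End(V_\alpha).
$$
Any algebra automorphism permutes the minimal central idempotents $z_\alpha=\sum_i e_{\alpha,ii}$. This holds because the center $\bigoplus_\alpha\C z_\alpha$ is preserved, and the primitive idempotents of the center are permuted. Let $\sigma$ be the resulting permutation, so $\Phi z_\alpha\Phi^{-1}=z_{\sigma(\alpha)}$. The simple summands $z_\alpha\mathcal A_x\cong\End(V_\alpha)$ are then mapped isomorphically onto one another, which forces $m_{\sigma(\alpha)}=m_\alpha$. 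Since $\Phi$ is pointwise invertible, it maps the subbundle $z_\alpha E_u$ isomorphically onto $z_{\sigma(\alpha)}E_{u'}$. Comparing ranks gives $d_\alpha m_\alpha=d_{\sigma(\alpha)}m_{\sigma(\alpha)}$, and with the equality of multiplicities this yields $d_{\sigma(\alpha)}=d_\alpha$.

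The bundle-level statement is that $\Phi$ maps the block $E_{\alpha,u}\otimes V_\alpha$ of Proposition~\ref{prop:stable-decomposition} onto $E_{\sigma(\alpha),u'}\otimes V_{\sigma(\alpha)}$. This follows because the blocks are the images of the central idempotents.

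\textbf{Main obstacle.} The only delicate point is to ensure that the two stabilizers are literally equal to $H_x$, and not merely conjugate to it. This requires both representatives to lie in the fixed-factor family of Proposition~\ref{prop:harmonic-lifts}, or more generally to have stabilizer exactly $H_x$ by hypothesis. For representatives outside the family, I would first conjugate by the gauge of Proposition~\ref{prop:conjugacy} and then apply the argument above. Regularity poses no problem: $\Phi$ is at least $W^{k+1,2}$, and all the algebraic identities hold pointwise.
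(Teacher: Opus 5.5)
Your proposal is correct and follows the paper's own proof: conjugation by the isomorphism carries one full stabilizer onto the other, so it normalizes $H_x$ and hence its complex span $\mathcal A_x$; it then permutes the primitive central idempotents, block sizes give $m_\alpha$, and the ranks $d_\alpha m_\alpha$ give $d_\alpha$. Your closing remark about representatives outside the family is unnecessary, because equality of the full stabilizer with $H_x$ is already a hypothesis of the statement.
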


\begin{proof}
Such an isomorphism conjugates the full stabilizer $H_x$ to itself and
therefore normalizes its complex span $\mathcal A_x$.  It permutes the
primitive central idempotents of $\mathcal A_x$.  Isomorphic matrix
blocks have the same size $m_\alpha$, and equality of the corresponding
bundle ranks $d_\alpha m_\alpha$ gives equality of $d_\alpha$.
\end{proof}

\subsection{Hodge theory and comparison on overlaps}
We write $u=(b,v)$ and work with the lift $\widehat c_x(u)$ from
Proposition~\ref{prop:harmonic-lifts}.
Chosen smooth identifications of the $(p,q)$-form bundles for $j_b$
place the Higgs differentials $D_{0,u},D_{1,u}$ and the Laplacians
$\Delta_{j,u}$ on fixed bundles over $\Sigma$.
We write $D_u$ for the total differential determined by
$D_{0,u}$ and $D_{1,u}$.
Their coefficients depend smoothly on $u$.
The fixed group $K_x$ is parallel for every member of the family and
commutes with the differentials, adjoints and Laplacians.
The fixed coefficient bundle
$\mathscr C_x=\End_0(E)^{K_x}$
is the trace-free part of the commutant of $\mathcal A_x$.
Proposition~\ref{prop:stable-decomposition} shows that it is a fixed smooth
subbundle.  Let $\Delta^{\mathscr C}_{j,u}$ denote the restriction of the
degree-$j$ Laplacian to this subbundle, and set
$$
 \mathcal H^j_u(\mathscr C_x)=\ker\Delta^{\mathscr C}_{j,u}.
$$

\begin{lem}\label{lem:hodge-ranks}
After shrinking $B_x$ and $O_x$ to connected neighborhoods, the harmonic
spaces of the full complex and their $K_x$-invariant subspaces have
constant dimensions.  If
$s$ is the number of stable isotypic blocks, then
$$
 \dim_\C\mathcal H^0_u(\mathscr C_x)
 =\dim_\C\mathcal H^2_u(\mathscr C_x)=s-1,
 \qquad
 \dim_\C\mathcal H^1_u(\mathscr C_x)=\dim_\C V_x^{H_x}.
$$
\end{lem}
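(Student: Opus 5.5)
We compute each harmonic space at every parameter $u\in B_x\times O_x$ through hypercohomology, and use constancy of the algebra $\mathcal A_u=\mathcal A_x$ to control degrees $0$ and $2$. The Hodge theorem for the elliptic complex $D_u$ identifies $\mathcal H^j_u$ with $\mathbb H^j(C^\bullet_u)$. Because $K_x$ acts by parallel automorphisms commuting with the Laplacians, $\mathcal H^j_u(\mathscr C_x)$ is the $K_x$-invariant part of $\mathcal H^j_u$.

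In degree zero, $\mathcal H^0_u$ consists of trace-free Higgs endomorphisms of $\theta_x(u)$. By Lemma~\ref{lem:constant-endomorphism-algebra} this space is $\Lie H_x=\mathcal A_x\cap\End_0$, so its dimension is $\dim_\C\mathcal A_x-1$ for every $u$. Its $K_x$-invariant part is the trace-free part of the center $\bigoplus_\alpha\C\,\Id_{E_\alpha}\otimes\Id_{V_\alpha}$. This gives dimension $s-1$, using that conjugation by $H_x$ fixes exactly the center of $\mathcal A_x$. In degree two, harmonic duality $\operatorname{SD}_u$ identifies $\mathcal H^2_u$ with $(\mathcal H^0_u)^*$ $K_x$-equivariantly; this is the same trace duality used at the base point, with the same proof for each member of the family. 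Hence $\mathcal H^2_u$ has constant dimension, and its invariant part has dimension $s-1$. In degree one, the index of the elliptic complex is a topological constant $\chi$, independent of $u$, because the symbols vary continuously on fixed bundles. Thus $\dim\mathcal H^1_u=\dim\mathcal H^0_u+\dim\mathcal H^2_u-\chi$ is constant. For the invariant part, we restrict the complex to the fixed subbundle $\mathscr C_x$. It is again elliptic with coefficients depending smoothly on $u$, so its index is constant. This yields $\dim\mathcal H^1_u(\mathscr C_x)=2(s-1)-\chi_{\mathscr C}$, a constant. Evaluating at $u_x$ identifies this constant with $\dim(\mathcal H^1_x)^{K_x}=\dim V_x^{K_x}=\dim V_x^{H_x}$.

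The main obstacle is justifying that the restricted complex on $\mathscr C_x$ is elliptic and that its cohomology is the invariant part of the full one. I would first use Proposition~\ref{prop:stable-decomposition}: it shows that $\mathscr C_x$ is a fixed smooth subbundle preserved by $D_u$ and its adjoints, since $K_x$ is parallel for every $u$. Averaging over $K_x$ then commutes with $\Delta_{j,u}$. An alternative is to drop the index argument and use upper semicontinuity of $\dim\ker\Delta^{\mathscr C}_{1,u}$ together with constancy of the alternating sum. Shrinking only ensures smoothness of the families; connectedness of $B_x\times O_x$ keeps the index constant.
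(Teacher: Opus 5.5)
Your proposal is correct and follows essentially the same route as the paper: degree zero comes from the constant endomorphism algebra, with the trace-free center as invariant part; degree two comes from the equivariant trace duality; and degree one comes from constancy of the index of $D_u+D_u^*$ on the full complex and on its restriction to $\mathscr C_x$, normalized at $u_x$. The only cosmetic difference is that you justify constancy of the index by homotopy invariance of the elliptic symbol, while the paper uses norm continuity of the Fredholm family on fixed Sobolev spaces together with Kato's local constancy theorem.
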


\begin{proof}
The restricted Higgs complex is elliptic because its symbol is the
Dolbeault symbol.
Averaging over all of $K_x$ identifies its harmonic spaces with the
invariant harmonic spaces of the full complex.

The full degree-zero harmonic space is the fixed Lie algebra $\fh_x$.
Its invariant part is the trace-free center of $\mathcal A_x$, of dimension
$s-1$.
The equivariant trace duality in
Section~\ref{sec:deformation}, applied to each harmonic representative, identifies
degree two with the dual of degree zero
\cite[Lemma~2.15]{CaoUniversalPolystable}.
Thus its invariant degree-two dimension is also $s-1$.
After the form bundles have been identified and the background norms have
been fixed, the even-to-odd operator
$$
 \mathcal D_u=D_u+D_u^*
$$
on $\mathscr C_x$ is a family of bounded Fredholm operators
from the fixed $W^{1,2}$ space of even-degree forms to the fixed $L^2$
space of odd-degree forms.
The smooth coefficients give continuity in operator norm on these fixed
spaces.
Consequently
$$
 \dim\mathcal H^1_u(\mathscr C_x)
 =\dim\mathcal H^0_u(\mathscr C_x)
  +\dim\mathcal H^2_u(\mathscr C_x)-\operatorname{ind}\mathcal D_u.
$$
The index is locally constant
\cite[Chapter~IV, Theorem~5.22, p.~236]{KatoPerturbation}.
At the base point the invariant degree-one space is $V_x^{H_x}$, so the displayed
formula gives
$\dim_\C\mathcal H^1_u(\mathscr C_x)=\dim_\C V_x^{H_x}$
throughout $B_x\times O_x$.
The same argument proves constant ranks for the full complex, since its
degree-zero algebra is fixed.
\end{proof}

\begin{prop}\label{prop:hodge}
After shrinking $B_x$ and $O_x$, for every integer $m\geq0$, the harmonic
projections and Green operators for the full and invariant complexes
$$
 \mathsf H_{j,u}\colon W^{m,2}\longrightarrow W^{m,2},\qquad
 G_{j,u}\colon W^{m,2}\longrightarrow W^{m+2,2}
$$
depend smoothly on $u\in B_x\times O_x$ in the corresponding operator
norms. The spaces
$$
 \mathcal H^1_{u,\mathrm{inv}}
   =\bigl(\operatorname{im}\mathsf H_{1,u}\bigr)^{K_x}
$$
form a smooth complex vector bundle
$\mathcal H^1_{\mathrm{inv}}$ of rank $\dim_\C V_x^{H_x}$.
\end{prop}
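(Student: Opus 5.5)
The plan is the standard constant-rank perturbation argument for elliptic complexes, using Lemma~\ref{lem:hodge-ranks} as the key input.

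\emph{Step 1: fix the operators on fixed spaces.} Work on the fixed bundles over $\Sigma$ obtained from the chosen identifications of the form bundles. By Proposition~\ref{prop:harmonic-lifts}, $\widehat c_x(u)$ and the metrics $h_u$, $\star_b$ have coefficients smooth on $B_x\times O_x\times\Sigma$. Hence $D_{j,u}$, $D_{j,u}^*$ and $\Delta_{j,u}$ are differential operators whose coefficients are smooth in $u$. Consequently
$$u\mapsto\Delta_{j,u}\in\mathcal L(W^{m+2,2},W^{m,2})$$
is smooth in operator norm for every $m$, and the same holds for the restrictions $\Delta^{\mathscr C}_{j,u}$ to the fixed subbundle $\mathscr C_x$.

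\emph{Step 2: smooth harmonic projections via a Riesz projection.} By ellipticity and self-adjointness, $\Delta_{j,u}$ has discrete nonnegative spectrum, and by Lemma~\ref{lem:hodge-ranks} $\dim\ker\Delta_{j,u}$ is constant. Fix $u_0$, let $\lambda_0>0$ be the first nonzero eigenvalue at $u_0$, and let $\Gamma$ be the circle $|z|=\lambda_0/2$. For $u$ near $u_0$, norm continuity keeps $\Gamma$ in the resolvent set. The Riesz projection
$$P_u=\frac1{2\pi i}\oint_\Gamma(z-\Delta_{j,u})^{-1}dz$$
therefore depends smoothly on $u$, since resolvents of a smooth family are smooth, and it has locally constant rank. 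Its range contains $\ker\Delta_{j,u}$. Equality of ranks forces the range to be exactly the kernel, so no small eigenvalues appear; this is where the constant-rank lemma is indispensable. By self-adjointness, $P_u=\mathsf H_{j,u}$.

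\emph{Step 3: regularity across $m$.} Elliptic regularity shows the resolvent maps $W^{m,2}\to W^{m+2,2}$ smoothly in $u$. Hence $\mathsf H_{j,u}$ is smooth as an operator on each $W^{m,2}$, and even smoothing into $C^\infty$. The Green operator is
$$G_{j,u}=(\Delta_{j,u}+\mathsf H_{j,u})^{-1}-\mathsf H_{j,u}.$$
The operator $\Delta_{j,u}+\mathsf H_{j,u}\colon W^{m+2,2}\to W^{m,2}$ is an invertible smooth family, and inversion is smooth, so $G_{j,u}$ is smooth $W^{m,2}\to W^{m+2,2}$. Local statements patch over $B_x\times O_x$ after shrinking to a relatively compact connected neighborhood, using uniform norm equivalence. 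The invariant complex is handled identically on $\mathscr C_x$. Since $K_x$ commutes with all operators, it also follows by composing with the fixed averaging projection $\mathsf{Av}$ onto $K_x$-invariants.

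\emph{Step 4: the bundle.} Set $\mathcal H^1_{u,\mathrm{inv}}=\operatorname{im}(\mathsf{Av}\,\mathsf H_{1,u})$. This is the image of a smooth family of projections of constant rank $\dim_\C V_x^{H_x}$, by Lemma~\ref{lem:hodge-ranks}. Local frames come from applying the projection at $u$ to a basis of the image at $u_0$; these sections are smooth and remain independent nearby. The spaces are complex subspaces because the projections are $\C$-linear, so they form a smooth complex vector bundle.

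\emph{Main obstacle.} The delicate step is justifying operator-norm smoothness of $u\mapsto\Delta_{j,u}$ on fixed Sobolev spaces despite the varying $j_b$. This relies on the chosen smooth identifications of the $(p,q)$-bundles, under which adjoints depend on $\star_b$ and $h_u$ only through smooth coefficients. Once this is in place, the constant-rank Riesz argument is routine.
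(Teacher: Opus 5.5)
Your proposal is correct and follows essentially the paper's argument: a Riesz projection around zero whose constant rank, via Lemma~\ref{lem:hodge-ranks}, rules out small nonzero eigenvalues, elliptic regularity to pass to every Sobolev order, and $K_x$-equivariance for the invariant complex and the bundle. The one difference is the Green operator, where your identity $G_{j,u}=(\Delta_{j,u}+\mathsf H_{j,u})^{-1}-\mathsf H_{j,u}$ and smoothness of operator inversion prove smooth dependence directly, whereas the paper obtains it by citing Azam--Rayan.
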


\begin{proof}
We first work with the $L^2$ realizations and fixed $W^{2,2}$ domains.
Smooth metric and density isometries identify the varying $L^2$ spaces.
Choose a positively oriented circle $\Gamma$ about zero such that, at
$u_x$, it meets none of the spectra and encloses no nonzero eigenvalue
of the full or restricted Laplacians. When $\mathscr C_x=0$, the
restricted projections and Green operators are zero.
Norm continuity from $W^{2,2}$ to $L^2$ and a Neumann-series argument
make the resolvents invertible on $\Gamma$ after shrinking $B_x$ and
$O_x$. The Riesz projection is
\begin{equation}\label{eq:riesz-harmonic-projection}
 \mathsf H_{j,u}
   =\frac{1}{2\pi i}\int_\Gamma(z-\Delta_{j,u})^{-1}\,dz.
\end{equation}
It has the same rank as the harmonic projection at $u_x$.
By Lemma~\ref{lem:hodge-ranks}, the kernel already has this rank, so
$\Gamma$ encloses no positive eigenvalue.
The neighborhoods $B_x$ and $O_x$ chosen above are independent of $m$.
On these neighborhoods, elliptic regularity gives the resolvents as
bounded operators $W^{m,2}\to W^{m+2,2}$ for every integer $m\geq0$.

On the orthogonal complement of the kernel, the Green operator is
$$
 G_{j,u}
  =\left(\Delta_{j,u}\big|_{(\ker\Delta_{j,u})^\perp}\right)^{-1}
       (1-\mathsf H_{j,u}).
$$
The smooth coefficients and the uniform spectral gap allow us to apply
\cite[Appendix A, Lemma A.1 and Proposition A.5]
{AzamRayanRelativeHarmonic}. They give smooth dependence in the stated
operator norms. Elliptic regularity identifies these operators at
different Sobolev orders.
The same construction commutes with $K_x$ and therefore applies to the
invariant complex.  Formula~\eqref{eq:riesz-harmonic-projection} and the
constant rank in Lemma~\ref{lem:hodge-ranks} show that the invariant
degree-one ranges form a smooth vector bundle of rank $\dim_\C V_x^{H_x}$.
\end{proof}

We next compare the local harmonic representatives on overlaps.

\begin{lem}\label{lem:unitary-overlaps}
On chart overlaps, the lifts of Proposition~\ref{prop:harmonic-lifts}
are locally related by determinant-preserving unitary Higgs-bundle
isomorphisms whose coefficients are smooth in the stratum and surface
variables.
\end{lem}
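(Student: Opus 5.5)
Two charts centered at $x$ and $x'$ whose images of the stratum $\mathscr S$ overlap are compared through their holomorphic families and the local completeness property of Section~\ref{sec:relative-families}. Restrict the chart at $x'$ to the stratum, that is, to $B_{x'}\times O_{x'}$ with its Kuranishi family $\theta_{x'}$, near a point $u'_0$ over a point $z\in\mathscr S$ lying in both charts. By Proposition~\ref{prop:fixed-factor}, the parametrizations $\chi_x(B_x\times q_x(O_x))$ and $\chi_{x'}(B_{x'}\times q_{x'}(O_{x'}))$ give a biholomorphic transition $\sigma\colon u'\mapsto u$ between open pieces of the two fixed balls, commuting with the projections to $\cT$. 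Apply local completeness at $z$ to the family $\theta_{x'}$ over the germ at $u'_0$. This gives a holomorphic classifying map into $Z_x$ and a determinant-preserving family isomorphism with $f^*\theta_x$. The classifying map lands in the $H_x$-saturation of $O_x$ and is not necessarily equal to $\sigma$. We compose with a holomorphic family $h(u')\in H_x$; this is available because on the fixed locus the closed orbits are singletons, so $f=\sigma$ up to that correction and the action \eqref{eq:relative-equivariance}. The result is a holomorphic family of complex gauge isomorphisms $\Phi(u')$ with $\theta_{x'}(u')\cong\theta_x(\sigma(u'))$. Its coefficients are holomorphic in $u'$ and smooth on $\Sigma$, by the regularity clauses of Section~\ref{sec:relative-families} and smoothness of gauges between smooth representatives in Section~\ref{sec:topology}.

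Next, conjugate by the positive gauges of Proposition~\ref{prop:harmonic-lifts}. Set $\Psi(u')=g_x^+(\sigma(u'))^{-1}\,\Phi(u')\,g_{x'}^+(u')$. This is a smooth determinant-one family of isomorphisms from $\widehat c_{x'}(u')$ to $\widehat c_x(\sigma(u'))$. Both are solutions of the Hitchin equations on the same curve $X_b$, so $\Psi(u')$ carries the harmonic metric $h_{u'}$ to a harmonic metric for the target polystable Higgs bundle. Harmonic metrics on a polystable object are unique only up to automorphisms. Hence $\Psi$ need not be unitary, but its polar decomposition $\Psi=U\cdot P$ has positive part $P=(\Psi^*\Psi)^{1/2}$ in the stabilizer of $\widehat c_{x'}(u')$. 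By Proposition~\ref{prop:harmonic-lifts}, that stabilizer is conjugate to a fixed group, and it is exactly $H_{x'}$ in the fixed trivialization. So $P(u')\in\exp(i\fk_{x'})$, and $U(u')=\Psi P^{-1}$ is a unitary, determinant-one Higgs isomorphism $\widehat c_{x'}(u')\to\widehat c_x(\sigma(u'))$. Its determinant is one since $\det P=1$ by the argument in Proposition~\ref{prop:conjugacy}.

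The main obstacle is to justify that $P$ lies in the stabilizer and depends smoothly on $u'$. For the first point I would use the standard uniqueness argument for harmonic metrics: $\Psi^*\Psi$ relates two harmonic metrics on the same polystable bundle. By the Weitzenböck/convexity argument, the same as in the proof of Proposition~\ref{prop:types} but in gauge theory, such an endomorphism is parallel and commutes with $\varphi$. It therefore lies in $\mathcal A_{x'}$, which is constant by Lemma~\ref{lem:constant-endomorphism-algebra}. For smoothness, $\Psi^*\Psi$ is then a smooth family in the finite-dimensional algebra $\mathcal A_{x'}$ of positive elements. The functional-calculus square root and inverse are smooth there, and the $W^{k+1,2}$ algebra properties used in Proposition~\ref{prop:conjugacy} give smooth coefficients on $\Sigma$. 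Finally, by Corollary~\ref{cor:normalizer-blocks}, $U$ normalizes the stabilizers and permutes isotypic blocks. This is consistent with it being defined only locally, as the statement asserts.
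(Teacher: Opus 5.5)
Your route is correct in outline but differs from the paper's. One step, the use of local completeness, needs repair.

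\textbf{Comparison.} The paper does not use local completeness here. It starts from the pointwise unitary equivalence of Section~\ref{sec:topology} and introduces the Hom-complex operator $D^{12}_{0,u}$, whose kernel has constant dimension $\dim_\C\mathcal A_x$ by Lemma~\ref{lem:constant-endomorphism-algebra}. It then uses the smooth spectral projection $\mathsf H^{12}_{0,u}$ to spread one unitary isomorphism $g_0$ into a smooth family $g(u)$ of Higgs isomorphisms. Next it polar-normalizes inside the adjoint-closed parallel algebra. That is your $P$-step; the pointwise unitary isomorphism plays the role of your appeal to uniqueness of harmonic metrics, and the two are equivalent. Finally it repairs the determinant with a smooth scalar $\zeta(u)\in S^1$, since projection does not control $\det$. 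You replace the spectral theory by versality, which yields determinant-preserving isomorphisms from the start, so $\det U=1$ is automatic. The paper's route is purely analytic and never needs versality away from a chart center.

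\textbf{The step to repair.} Local completeness in Section~\ref{sec:relative-families} is stated only at the center of a chart: the distinguished fiber is identified with $\mathbf x$. So it does not give a classifying map of $\theta_{x'}$ near $u'_0$ into $Z_x$ when $z\neq x$. Go through the chart centered at $z$ instead:
\begin{enumerate}
\item Apply local completeness at $z$ to $\theta_x$ near $\sigma(u'_0)$ and to $\theta_{x'}$ near $u'_0$.
\item Both classifying maps land in $B_z\times O_z$, by the closed-singleton-orbit argument in the proof of Proposition~\ref{prop:relative-descent}. That argument uses polystability and the fact that the classes lie in $\mathscr S$.
\item The two maps agree after precomposing the first with $\sigma$, because $\chi_z\circ(\operatorname{id}\times q_z)$ is injective on $B_z\times O_z$.
\item Composing the two family isomorphisms then gives your $\Phi$.
\end{enumerate}

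The same argument shows that your correction by a family $h(u')\in H_x$ is vacuous. The ball $O_x$ is pointwise $H_x$-fixed, so its $H_x$-saturation is $O_x$ itself. Closed-orbit uniqueness therefore forces the classifying map to equal the transition exactly.
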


\begin{proof}
We use smooth marked-curve trivializations on an overlap and pull both
families back to one reference surface and bundle.
The two lifts $c_1(u),c_2(u)$ are pointwise unitarily equivalent by
Section~\ref{sec:topology}.
On $\Hom(E,E)$, we define the degree-zero Hom-complex differential by
$$
 D^{12}_{0,u}(g)=
 \bigl(\bar\partial_1\circ g-g\circ\bar\partial_2,\;
                   \varphi_1g-g\varphi_2\bigr).
$$
An invertible $g\in\ker D^{12}_{0,u}$ is exactly a Higgs-bundle
isomorphism satisfying
$c_2(u)=c_1(u)^g$.
The degree-zero Laplacian $(D^{12}_{0,u})^*D^{12}_{0,u}$ is a smooth
nonnegative elliptic family.
At each parameter, composition with one Higgs-bundle isomorphism
identifies $\ker D^{12}_{0,u}$ with the endomorphism algebra.
Lemma~\ref{lem:constant-endomorphism-algebra} therefore gives
$\dim_\C\ker D^{12}_{0,u}=\dim_\C\mathcal A_x$.
The spectral argument in the proof of Proposition~\ref{prop:hodge} gives
orthogonal projections $\mathsf H^{12}_{0,u}$ onto these kernels,
depending smoothly on $u$ as operators on $W^{m,2}$ for every integer
$m\geq0$.

We choose a determinant-preserving unitary isomorphism $g_0$ at one
parameter.  The section $g(u)=\mathsf H^{12}_{0,u}g_0$ lies in
$\ker D^{12}_{0,u}$ and remains invertible near that parameter by Sobolev
embedding.  For a fixed nearby parameter, let $g_{\mathrm{un}}$ be any
unitary element of $\ker D^{12}_{0,u}$ and put
$s_g=g_{\mathrm{un}}^{-1}g$.  The
endomorphism $s_g$ of the second harmonic Higgs bundle is parallel,
and its adjoint is again a Higgs endomorphism by
Section~\ref{sec:stabilizers}.  Hence $s_g^*s_g$ is a
positive parallel endomorphism and functional calculus takes place in
the same adjoint-closed algebra.  The polar normalization
$$
 \widetilde g=g(g^*g)^{-1/2}
    =g_{\mathrm{un}}s_g(s_g^*s_g)^{-1/2}
$$
satisfies
$$
 D^{12}_{0,u}(\widetilde g)=0,
 \qquad \widetilde g^*\widetilde g=\Id.
$$
Its coefficients depend smoothly on $u$ and the surface variable.

Both determinant connections are trivial through $\tau$, so
$\det\widetilde g\colon\Sigma\to S^1$ is parallel and therefore constant
on the connected surface. After restricting to a smaller neighborhood
of the chosen parameter, a smooth function $\zeta(u)\in S^1$ satisfies
$\zeta(u)^r\det\widetilde g(u)=1$.  The unitary gauge transformation
$\zeta\widetilde g$ has determinant one and gives the required family of
Higgs-bundle isomorphisms.
\end{proof}

The following fixed-curve statement records the notation used in the
gauge-theoretic comparison.

\begin{lem}\label{lem:fixed-fiber-harmonic-tangent}
Let $u=(b,v)$ and let $\widehat c_x(u)$ be the corresponding solution.
Denote by $\delta_{0,u}$ the infinitesimal action of the determinant-one
unitary gauge group on the trace-free affine space of pairs over $X_b$.
Let $\boldsymbol\mu_{i,u}$, $1\leq i\leq3$, be the three Hitchin moment
maps, and let $\mathbb I_{i,b}$ be the associated complex structures.
Thus $(I_b,J_b,K_b)=(\mathbb I_{1,b},\mathbb I_{2,b},
\mathbb I_{3,b})$.
Then
\begin{equation}\label{eq:fixed-fiber-horizontal}
 \mathcal H^{\mathrm{Hit}}_{u,\mathrm{hor}}=
 \left(
   \ker\delta_{0,u}^*\cap
   \bigcap_{i=1}^3\ker d\boldsymbol\mu_{i,u}
 \right)^{K_x}.
\end{equation}
This space is the horizontal tangent at $[\widehat c_x(u)]$ to the
$K_x$-orbit-type stratum in $\cM_{\mathrm{Hit},b}$.  The map sending
a connection variation to its $(0,1)$ part gives
a canonical complex-linear isomorphism
$\mathcal H^{\mathrm{Hit}}_{u,\mathrm{hor}}
 \simeq\mathcal H^1_{u,\mathrm{inv}}$.
The horizontal space carries the metric and the three complex structures
induced by the fixed-curve hyperk\"ahler quotient.
\end{lem}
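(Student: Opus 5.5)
The plan is to reduce the statement to the fixed-curve hyperk\"ahler quotient at the solution $\widehat c_x(u)$, whose full stabilizers are exactly $H_x$ and $K_x$ by Proposition~\ref{prop:harmonic-lifts}. We would then use $K_x$-averaging to pass between the full Hitchin deformation complex and its invariant part.

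\textbf{Step 1: the horizontal space at a solution.} Work on the fixed curve $X_b$. The tangent space to the $K_x$-orbit-type stratum of $\cM_{\mathrm{Hit},b}$ at $[\widehat c_x(u)]$ is computed by the standard local slice model for hyperk\"ahler quotients. Choose a $K_x$-equivariant slice
\[
 \widehat c_x(u)+\ker\delta_{0,u}^*
\]
for the $\cG_{k+1}$-action. By the slice theorem, a neighborhood in $\cM_{\mathrm{Hit},b}$ is the quotient by $K_x$ of the zero set of the three moment maps on this slice. The points with stabilizer exactly $K_x$ are the $K_x$-fixed points of that zero set. Their tangent space is the $K_x$-invariant part of the linearized zero set
\[
 \ker\delta_{0,u}^*\cap\bigcap_i\ker d\boldsymbol\mu_{i,u},
\]
which gives \eqref{eq:fixed-fiber-horizontal}. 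The identification of orbit types uses Proposition~\ref{prop:conjugacy}. We would invoke this model as in \cite[Theorems~A and~C]{FanOrbitTypes}, adapted to determinant-one groups. Smoothness of the fixed locus follows because the invariant part of the linearized equations has constant rank: an averaging argument shows the fixed locus of a compact group acting on a manifold is a submanifold.

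\textbf{Step 2: comparison with $\mathcal H^1_{u,\mathrm{inv}}$.} Write a variation as $(A,\Phi)$ with $A$ skew-Hermitian, and send it to $(a,\psi)=(A^{0,1},\Phi)$; this map is complex-linear for $I_b$. We then check three correspondences. The condition $\delta_{0,u}^*=0$ together with $d\boldsymbol\mu_1=0$ (the real moment map) corresponds, up to the star-operator conventions, to $D_{0,u}^*(a,\psi)=0$. The complex moment map $d\boldsymbol\mu_2+i\,d\boldsymbol\mu_3=0$ corresponds to $D_{1,u}(a,\psi)=0$. Hence the image is exactly $\ker D_{0,u}^*\cap\ker D_{1,u}$, which is $\mathcal H^1_u$. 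Since $K_x$ is unitary and commutes with all these operators, invariant parts correspond, giving
\[
 \mathcal H^{\mathrm{Hit}}_{u,\mathrm{hor}}\simeq\mathcal H^1_{u,\mathrm{inv}}.
\]
The normalizations are the delicate part: the complex moment map must be compared using the convention \eqref{eq:moment}, and the real part using Proposition~\ref{prop:harmonic-lifts}, where $\boldsymbol\mu_{\R}$ appears.

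\textbf{Step 3: induced structures.} The $L^2$ metric and the three complex structures on the configuration space are constant, and they preserve $\ker\delta_{0,u}^*\cap\bigcap_i\ker d\boldsymbol\mu_{i,u}$. This is the usual hyperk\"ahler identity: the normal space is spanned by $\im\delta_{0,u}$ and its images under $I,J,K$. They commute with the unitary $K_x$-action, so they preserve the $K_x$-invariant part and restrict to the horizontal space. This gives the final assertion.

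The main obstacle is Step~1, namely the claim that the horizontal space is exactly the tangent space to the orbit-type stratum and not merely contained in it. We would try to reduce this to the finite-dimensional model. By Lemma~\ref{lem:hodge-ranks}, $\dim\mathcal H^1_{u,\mathrm{inv}}=\dim V_x^{H_x}$. Proposition~\ref{prop:fixed-factor} shows the stratum has exactly this dimension, and Proposition~\ref{prop:harmonic-lifts} gives an injective vertical differential into the invariant space. Equal dimensions then force equality.
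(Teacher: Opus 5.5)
Your proposal is correct, and its skeleton matches the paper. Both use a determinant-one Coulomb slice at $\widehat c_x(u)$, identify the orbit-type points in the slice with the $K_x$-fixed part of the moment-map zero set, and use the $(0,1)$-part identification of \cite[Proposition~2.2]{FanOrbitTypes}. One correction in Step~1: the orbit-type locus consists of points whose stabilizer is \emph{conjugate} to $K_x$. Passing to ``stabilizer exactly $K_x$'' in the slice is the containment-plus-counting argument: slice stabilizers lie in $K_x$, and equal dimension and component count force equality, as in Lemma~\ref{lem:compact-refinement}. Proposition~\ref{prop:conjugacy} does not do this step, although you need it later to transfer orbit types through $\mathrm{HK}_b$.

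The real difference is how you show that the $K_x$-fixed zero set is smooth with tangent space exactly $(\ker\delta_{0,u}^*\cap\bigcap_i\ker d\boldsymbol\mu_{i,u})^{K_x}$. The paper stays on the gauge-theoretic side. The identity $(d\boldsymbol\mu_{i,u})^*=-\mathbb I_{i,b}\delta_{0,u}\star_b$ and the orthogonality of the spaces $\mathbb I_{i,b}\operatorname{im}\delta_{0,u}$ supply the closed-range and slice identities on which \cite[Section~4.1]{FanOrbitTypes} runs.

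Your first justification does not replace this. Averaging shows only that the linear space of $K_x$-fixed vectors in the slice is smooth, not the moment-map zero set inside it, and the constant-rank claim is unproved. Your dimension count is what carries the argument, and it works. By Proposition~\ref{prop:fixed-factor}, $\mathrm{HK}_b$ and Proposition~\ref{prop:conjugacy}, the Hitchin-side stratum near the point is the image of the harmonic lift on $\{b\}\times O_x$. After Coulomb projection this lift is an injective immersion into the $K_x$-fixed zero set. That supplies the manifold structure your ``equal dimensions force equality'' presupposes, and Lemma~\ref{lem:hodge-ranks} with your Step~2 gives the dimension match.

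Proposition~\ref{prop:harmonic-lifts} gives injectivity only at $u_x$. At other $u$ you need the continuity-and-rank argument used for $L_u$ in the proof of Theorem~\ref{thm:main-harmonic}, which does not use this lemma, or a recentering.

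The trade-off is this. The paper builds the Hitchin-side strata independently of the Kuranishi charts, so the later biholomorphism in Theorem~\ref{thm:main-harmonic} compares two independently constructed structures. Your route avoids redoing the infinite-dimensional slice analysis in the determinant-one setting, at the cost of folding that later comparison into the lemma.
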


\begin{proof}
In the notation of \cite[Section~4.1]{FanOrbitTypes}, the operators
$d_1$, the moment maps $\mu_i$, and the complex structures $I_i$ are,
respectively,
$$
 d_1=\delta_{0,u},\qquad
 \mu_i=\boldsymbol\mu_{i,u},\qquad
 I_i=\mathbb I_{i,b}.
$$
Writing $\star_b$ for the Hodge star on $X_b$, the moment-map identity is
\begin{equation}\label{eq:fan-moment-crosswalk}
 (d\boldsymbol\mu_{i,u})^*
   =-\mathbb I_{i,b}\,\delta_{0,u}\,\star_b.
\end{equation}
The three subspaces
$\mathbb I_{i,b}\operatorname{im}\delta_{0,u}$ are orthogonal at a
solution.

The orthogonal parallel splitting
$$
 \End E=\End_0E\oplus\C\Id
$$
separates the fixed-determinant, trace-free affine space from the scalar
summand.  The infinitesimal action, its adjoint, the principal symbols,
the three complex structures, and the linearized moment maps preserve
this splitting, while the scalar equations decouple.  The
determinant-one unitary gauge group is a closed subgroup of the unitary
gauge group and acts properly.  Its Coulomb slice is obtained from the
elliptic inverse on the complement of
$\ker\delta_{0,u}$ in the trace-free gauge algebra; exponentiation stays
inside the determinant-one group.

In this slice, stabilizer containment together with equality of
dimensions and numbers of components identifies the orbit-type locus
through $\widehat c_x(u)$ with the $K_x$-fixed locus.  Formula
\eqref{eq:fan-moment-crosswalk} and the orthogonality of the three spaces
$\mathbb I_{i,b}\operatorname{im}\delta_{0,u}$ provide the
closed-range and slice identities used in
\cite[Section~4.1]{FanOrbitTypes}.  The argument therefore gives a smooth
orbit-type stratum with horizontal tangent equal to the right-hand
side of \eqref{eq:fixed-fiber-horizontal}, together with its induced
hyperk\"ahler tensors.  The map sending a connection variation to its
$(0,1)$ part identifies this horizontal space with the invariant harmonic
space in Proposition~\ref{prop:hodge}, as in
\cite[Proposition~2.2]{FanOrbitTypes}.
\end{proof}

\begin{proof}[Proof of Theorem~\ref{thm:main-harmonic}]
We work with the lifts of Proposition~\ref{prop:harmonic-lifts}.
Lemma~\ref{lem:fixed-fiber-harmonic-tangent} supplies the fixed-curve
orbit-type strata and identifies their horizontal tangents with
$\mathcal H^1_{u,\mathrm{inv}}$.
We compare this bundle with the vertical tangent of the Kuranishi chart.
For $w\in V_x^{H_x}$, let $\operatorname{KS}_u(w)$ be the vertical
Kodaira--Spencer cocycle obtained by differentiating the coefficients of
$\theta_x$ in the fixed smooth trivialization and viewing the result as
$j_b$-type forms. Define
\begin{equation}\label{eq:harmonic-tangent}
 L_u(w)=\mathsf H_{1,u}
       \left(\Ad((g_x^+(u))^{-1})\operatorname{KS}_u(w)\right).
\end{equation}
For every $w$, the vector $L_u(w)$ is an invariant solution of the
linearized Hitchin equations.
Under the harmonic identification at $u_x$, the map $L_{u_x}$ is the
identity on $V_x^{H_x}$ by Proposition~\ref{prop:harmonic-lifts}.
Proposition~\ref{prop:hodge} gives
$\operatorname{rank}\mathcal H^1_{\mathrm{inv}}=\dim_\C V_x^{H_x}$, so
smoothness and a further shrinking make every $L_u$ an isomorphism.

For the following differential formulas, we identify each connection
variation with its $(0,1)$ part and retain its Higgs-field component.
The resulting pair lies in the degree-one term of the Higgs deformation
complex at $u$, whose infinitesimal complex-gauge operator is $D_{0,u}$.
Differentiating the right gauge action in these coordinates gives
$$
 d_v\widehat c_x|_u(w)
 =\Ad((g_x^+(u))^{-1})\operatorname{KS}_u(w)
   +D_{0,u}\bigl((g_x^+(u))^{-1}d_vg_x^+|_u(w)\bigr).
$$
The harmonic projection annihilates the second term, which proves
\eqref{eq:harmonic-tangent}.  For fixed $b$, the Kodaira--Spencer map
$\operatorname{KS}_u$ is complex linear.  Conjugation and
$\mathsf H_{1,u}$ are complex linear on the Higgs deformation complex.
Thus $L_u$ identifies the intrinsic vertical complex structure with
$\mathbb I_{1,b}$.

For fixed $b$, the harmonic-lift map from the $H_x$-fixed subspace chart
in $\cM_b$ to $\cM_{\mathrm{Hit},b}$ is smooth and has the
invertible differential $L_u$.  It is a local biholomorphism.  Its
underlying map is the restriction of $\mathrm{HK}_b^{-1}$; hence
$\mathrm{HK}_b$ is biholomorphic on every connected orbit-type stratum.
Proposition~\ref{prop:types} identifies the orbit types and their
connected components globally.

We next descend the tangent identifications and tensors.  We write
$c_2=c_1^g$ for a determinant-preserving unitary gauge transformation
$g$ supplied by
Lemma~\ref{lem:unitary-overlaps}.
Let $D_{0,c_i}$ and $\mathsf H_{1,c_i}$ denote the infinitesimal
complex-gauge operator and degree-one harmonic projection at $c_i$.
For every vertical parameter tangent vector $\dot u$, differentiation of
the right action gives
\begin{equation}\label{eq:overlap-differential}
 dc_2(\dot u)=\Ad(g^{-1})dc_1(\dot u)
        +D_{0,c_2}\bigl(g^{-1}dg(\dot u)\bigr).
\end{equation}
The second term in \eqref{eq:overlap-differential} is an infinitesimal
gauge direction. Unitary conjugation identifies the Laplacians, so
$$
 \mathsf H_{1,c_2}\Ad(g^{-1})
   =\Ad(g^{-1})\mathsf H_{1,c_1},
 \qquad \mathsf H_{1,c_2}D_{0,c_2}=0.
$$
Two choices of $g$ differ by an element of the full compact stabilizer,
which acts trivially on its invariant harmonic space.
Hence the maps $L_u$ descend canonically.
Trace pairings, the $L^2$ metric and $I_b,J_b,K_b$ are unitary-gauge
invariant, so they descend as smooth vertical tensors.
Corollary~\ref{cor:normalizer-blocks} shows that this descent includes
every normalizer component, including permutations of stable isotypic
blocks with the same pair $(d_\alpha,m_\alpha)$.

Compatible surface metrics for a fixed $j_b$ are conformal.
The one-form $L^2$ pairing and the harmonic conditions here are
conformally invariant.
Curve and bundle reparameterizations commute with the same
constructions.
The descended tensors are therefore independent of these choices.
Their hyperk\"ahler identities and integrability on a fixed fiber follow
from Lemma~\ref{lem:fixed-fiber-harmonic-tangent}.
For each $b$, $\mathrm{HK}_b$ restricts to an $I_b$-biholomorphism from
every connected orbit-type stratum of $\cM_{\mathrm{Hit},b}$ onto
the corresponding connected component of
$\mathscr S\cap p^{-1}(b)$.
\end{proof}

\section{Relative symplectic forms}\label{sec:forms}
We first prove the trace identity for the relative Kuranishi families and
then descend it to every stratum of Theorem~\ref{thm:main-strata}.
The final comparison identifies the descended form with the reduced form
on each local orbit-type stratum.

\subsection{The vertical trace pairing on Higgs pairs}

\begin{notation}\label{not:relative-coordinates}
In the chart centered at $x$, let
$\mu(b)\in A^{0,1}(T_X)$ be the harmonic Beltrami differential
representing $b$, with $\|\mu(b)\|_\infty<1$.
For
$$
 \alpha=(a,\psi)\in W^{k,2}
   \bigl(A^{0,1}(\End_0E)\oplus A^{1,0}(\End_0E)\bigr),\qquad
 \varphi_{\mathrm{tot}}=\varphi+\psi,
$$
the induced Dolbeault operator and Higgs field on $X_b$ are
$$
 \bar\partial_{b,\alpha}
   =\bar\partial_E-\iota_{\mu(b)}\partial_h+a,
 \qquad
 \varphi_{b,\alpha}
   =\varphi_{\mathrm{tot}}
      +\iota_{\mu(b)}\varphi_{\mathrm{tot}}.
$$
Here $\iota_{\mu(b)}$ denotes contraction by $\mu(b)$, and
$\partial_h$ is the Chern $(1,0)$ operator at $x$.
If
$\mu(b)=\mu_{\bar z}^{\,z}(b)
 d\bar z\otimes\partial_z$, then $\bar\partial_{b,\alpha}$ is evaluated on
the vector
$\partial_{\bar z}-\mu_{\bar z}^{\,z}(b)\partial_z$.
The integrability equation and right gauge action are
$$
 F(b,\alpha)
   =\bar\partial_{\End}\varphi_{\mathrm{tot}}
     +\partial_h(\iota_{\mu(b)}\varphi_{\mathrm{tot}})
     +[a,\varphi_{\mathrm{tot}}]=0,
$$
$$
 a^g=g^{-1}ag
     +g^{-1}(\bar\partial_{\End}g-\iota_{\mu(b)}\partial_hg),
 \qquad
 \varphi_{\mathrm{tot}}^g=g^{-1}\varphi_{\mathrm{tot}}g.
$$
These formulas express the relative integrability equation in the fixed
smooth trivialization used in
\cite[Proposition~2.4, equations~(2)--(4)]{CaoUniversalPolystable}.
\end{notation}

\begin{defn}\label{defn:configuration}
For vertical variations of a Higgs pair on $X_b$, define
$$
 \Omega^{\mathrm{cfg},\tr}_{/B_x}
 ((\dot{\bar\partial}_1,\dot\varphi_1),
  (\dot{\bar\partial}_2,\dot\varphi_2))
 =\int_{X_b}\tr(
       \dot{\bar\partial}_1\wedge\dot\varphi_2
      -\dot{\bar\partial}_2\wedge\dot\varphi_1).
$$
\end{defn}

\begin{lem}\label{lem:configuration-pairing}
In the coordinates of Notation~\ref{not:relative-coordinates},
\begin{equation}\label{eq:configuration-pairing}
 \Omega^{\mathrm{cfg},\tr}_{/B_x}
       ((\dot a_1,\dot\psi_1),(\dot a_2,\dot\psi_2))
 =\int_X\tr(\dot a_1\wedge\dot\psi_2-\dot a_2\wedge\dot\psi_1).
\end{equation}
This form is holomorphic and relatively closed.
For fixed $b$, it is invariant under the full determinant-one complex
gauge group.
On the integrable locus it annihilates an infinitesimal vertical gauge
direction paired with any infinitesimal vertical integrable deformation.
These assertions agree in the smooth and $W^{k,2}$ presentations.
\end{lem}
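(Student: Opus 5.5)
We compute the variations of the Dolbeault operator and Higgs field directly from Notation~\ref{not:relative-coordinates}. For fixed $b$, a vertical variation $(\dot a,\dot\psi)$ of $\alpha$ gives
$$
 \dot{\bar\partial}_{b,\alpha}=\dot a,\qquad
 \dot\varphi_{b,\alpha}=\dot\psi+\iota_{\mu(b)}\dot\psi .
$$
Here $\dot a$ is viewed as a $(0,1)$-form for $j_b$ by evaluating on $\partial_{\bar z}-\mu\partial_z$. The $(1,0)_b$-part of the Higgs field is $\dot\psi+\iota_\mu\dot\psi$.

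We then compute the wedge product in the local frame $dz,d\bar z$. A $j_b$-$(0,1)$-form evaluated on $\partial_{\bar z}-\mu\partial_z$ is represented by the $j_0$-form $\dot a=\dot a_{\bar z}d\bar z$, reinterpreted. The correction $\iota_\mu\dot\psi=\mu\dot\psi_z\,d\bar z$ is a $d\bar z$-term. Its wedge with $\dot a$, a pure $d\bar z$-form, vanishes. We need to check that the two-form $\tr(\dot a_1\wedge\dot\varphi_2)$ computed in $j_b$-types equals $\tr(\dot a_1\wedge\dot\psi_2)$ as a two-form on $\Sigma$. The pairing integrates a top-degree form, which does not depend on the complex structure, so only the genuine differential forms matter. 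The main care is the type bookkeeping: the $(0,1)_b$-form corresponding to $\dot a$ is $\dot a_{\bar z}(d\bar z-\bar\mu\,dz)/(1-|\mu|^2)$-type or $d\bar z$-type, depending on the convention. I will fix the convention that the pairing is the wedge of the actual forms. I expect the terms quadratic in $\mu$ to cancel by antisymmetry or by $d\bar z\wedge d\bar z=0$. This is the main obstacle, and I would resolve it by writing both forms in the basis $d\bar z$, $dz$ and checking that the mixed terms contain $d\bar z\wedge d\bar z$ or $dz\wedge dz$. This yields \eqref{eq:configuration-pairing}.

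Given \eqref{eq:configuration-pairing}, the remaining assertions are direct consequences.
\begin{itemize}
\item[\textbf{(i)}] \emph{Holomorphy and relative closedness.} The right-hand side is a constant, $b$-independent, complex bilinear form on the fixed linear space of $\alpha$. It is therefore holomorphic, and it is closed along the fibers because it has constant coefficients.
\item[\textbf{(ii)}] \emph{Gauge invariance.} The differential of $\alpha\mapsto\alpha^g$ is $(\dot a,\dot\psi)\mapsto(g^{-1}\dot a g,g^{-1}\dot\psi g)$. Invariance then follows from cyclicity of the trace.
\item[\textbf{(iii)}] \emph{Annihilation of gauge directions.} An infinitesimal gauge direction has the form
$$
 (\bar\partial_{\End}\xi-\iota_\mu\partial_h\xi+[a,\xi],\,[\varphi_{\mathrm{tot}},\xi]).
$$
Pair it with an integrable variation $(\dot a,\dot\psi)$ satisfying $dF(\dot a,\dot\psi)=0$ and integrate by parts. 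The two $\xi$-terms combine to $\int\tr(\xi\,dF(\dot a,\dot\psi))$ up to sign, which is zero. The boundary-free integration by parts on the closed surface moves $\bar\partial_{\End}-\iota_\mu\partial_h$ onto $\dot\psi$, producing $\bar\partial_{\End}\dot\psi+\partial_h(\iota_\mu\dot\psi)$. The bracket terms produce $[\dot a,\varphi_{\mathrm{tot}}]+[a,\dot\psi]$ via ad-invariance of the trace, reproducing the linearization of $F$.
\end{itemize}

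Finally, for the two presentations: all expressions are continuous in $W^{k,2}$ with $k\geq4$, and smooth pairs are dense. The smooth identities therefore extend to the $W^{k,2}$ presentation, and the two presentations agree.
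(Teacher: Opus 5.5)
Your arguments for holomorphy, gauge invariance, the Sobolev extension, and the annihilation of gauge directions are fine. For the last, you integrate by parts in the fixed coordinates and recover $dF$ with its $\mu$-terms, whereas the paper works intrinsically on $X_b$ with $(\bar\partial_c,\varphi_c)$; both are valid once \eqref{eq:configuration-pairing} is known.

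The gap is \eqref{eq:configuration-pairing} itself. You leave it as an expectation, and the mechanism you expect is wrong. Write $\mu$ for $\mu_{\bar z}^{\,z}(b)$. The $j_b$-$(0,1)$-form with value $\dot a_{\bar z}$ on $\partial_{\bar z}-\mu\partial_z$ is uniquely $\dot a_{\bar z}\vartheta_b$, with $\vartheta_b=(d\bar z+\bar\mu\,dz)/(1-|\mu|^2)$; nothing here depends on a convention. Your candidate with $-\bar\mu\,dz$ does not annihilate the $j_b$-$(1,0)$ vector $\partial_z-\bar\mu\partial_{\bar z}$. It would also produce $\frac{1+|\mu|^2}{1-|\mu|^2}\,d\bar z\wedge dz$, which is a false identity. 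With the correct form, the cross terms do not all die by $d\bar z\wedge d\bar z=0$ or $dz\wedge dz=0$:
\[
 (d\bar z+\bar\mu\,dz)\wedge(dz+\mu\,d\bar z)
 =d\bar z\wedge dz+|\mu|^2\,dz\wedge d\bar z
 =(1-|\mu|^2)\,d\bar z\wedge dz .
\]
The surviving term $|\mu|^2\,dz\wedge d\bar z$ is exactly what cancels the normalizing denominator. It gives $\vartheta_b\wedge(dz+\mu\,d\bar z)=d\bar z\wedge dz$, which is the paper's computation.

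Your alternative reading, in which $\dot a$ stays the pure $d\bar z$-form, happens to give the right answer, but only for a reason you do not supply. The difference $d\bar z-\vartheta_b=-\bar\mu(1-|\mu|^2)^{-1}(dz+\mu\,d\bar z)$ is of type $(1,0)$ for $j_b$. Its wedge with the $(1,0)_b$ Higgs variation $\dot\psi_z(dz+\mu\,d\bar z)$ therefore vanishes. You need one of these two verifications. Declaring that the pairing is ``the wedge of the actual forms'' is not an option: Definition~\ref{defn:configuration} fixes the pairing through $j_b$-types on $X_b$. Identifying that pairing with the fixed-coordinate one is precisely the content of \eqref{eq:configuration-pairing}.
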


\begin{proof}
We first express the $j_b$-type variations in the fixed smooth
coordinates.  In a local coordinate, write
$\dot a=a_{\bar z}\,d\bar z$ and
$\dot\psi=\psi_z\,dz$.
The unique $(0,1)$-form for $X_b$ taking value $1$ on
$\partial_{\bar z}-\mu_{\bar z}^{\,z}(b)\partial_z$ is
$$
 \vartheta_b=
 \frac{d\bar z+\overline{\mu_{\bar z}^{\,z}(b)}\,dz}
      {1-|\mu_{\bar z}^{\,z}(b)|^2}.
$$
The induced Dolbeault variation on $X_b$ is
$a_{\bar z}\vartheta_b$, and the induced Higgs variation is
$\psi_z(dz+\mu_{\bar z}^{\,z}(b)\,d\bar z)$.
A direct calculation gives
$$
 \vartheta_b\wedge
 (dz+\mu_{\bar z}^{\,z}(b)\,d\bar z)=d\bar z\wedge dz.
$$
The trace integrand is consequently
$\tr(a_{1,\bar z}\psi_{2,z}-a_{2,\bar z}\psi_{1,z})
d\bar z\wedge dz$.
This proves \eqref{eq:configuration-pairing} for every $b$.
Both sides are tensorial, so the coordinate identities agree on
overlaps.
The expression has constant coefficients in the fixed Banach
coordinates and hence is holomorphic and relatively closed.

A fixed complex gauge transforms both variations by conjugation.
Cyclicity of the trace proves invariance. At an integrable Higgs pair
$(\bar\partial_c,\varphi_c)$, a right-action gauge direction is
$(\bar\partial_c\xi,[\varphi_c,\xi])$.
An integrable infinitesimal deformation satisfies
$\bar\partial_c\psi+[a,\varphi_c]=0$.
Integration by parts gives
\begin{equation}\label{eq:gauge-horizontal}
 \begin{aligned}
 \Omega^{\mathrm{cfg},\tr}_{/B_x}
  ((\bar\partial_c\xi,[\varphi_c,\xi]),(a,\psi))
   &=-\int_{X_b}\tr
       \bigl(\xi(\bar\partial_c\psi+[a,\varphi_c])\bigr)\\
   &=0 .
 \end{aligned}
\end{equation}
The sign is that of the right action in
Setup~\ref{defn:standing}.
Products and integrals are continuous for $k\geq4$.  Smooth
approximation therefore proves the same formula at the stated Sobolev
orders.
\end{proof}

The following identity shows that the Kuranishi correction preserves the
vertical trace pairing.

\begin{prop}\label{prop:trace-identity}
For every $(b,v)\in B_x\times U_x$ and $u_1,u_2\in V_x$, the Kuranishi
map $A_x$ of Section~\ref{sec:relative-families} satisfies
\begin{equation}\label{eq:kuranishi-trace}
 \Omega^{\mathrm{cfg},\tr}_{/B_x}
       (d_vA_x(b,v)u_1,d_vA_x(b,v)u_2)
       =\Omega_{x,\C}(u_1,u_2).
\end{equation}
The left-hand side uses the induced $(0,1)$- and $(1,0)$-forms on $X_b$.
The identity remains valid after pullback by any holomorphic map from a
reduced analytic space to the reduced equation locus, for every pair of
vertical tangent vectors.
\end{prop}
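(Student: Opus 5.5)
The plan is to reduce \eqref{eq:kuranishi-trace} to a linear-algebra statement about the base-point Higgs complex at $x$. The key is that, by Lemma~\ref{lem:configuration-pairing}, the vertical configuration pairing is the constant-coefficient form $\Omega_{x,\C}$ of \eqref{eq:trace}, independently of $b$.

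\textbf{Step 1: reduction to a base-point statement.} By \eqref{eq:configuration-pairing}, for every $b\in B_x$ the left-hand side of \eqref{eq:kuranishi-trace} equals $\Omega_{x,\C}(d_vA_x u_1,d_vA_x u_2)$, computed with the fixed $x$-types of forms. Since $A_x(b,v)=v+w_x(b,v)$, we have
$$
 d_vA_x(b,v)u_i=u_i+\dot w_i,\qquad \dot w_i:=d_vw_x(b,v)u_i .
$$
Because $w_x$ takes values in the closed subspace $\im D_{1,x}^*$ and is holomorphic, each $\dot w_i$ also lies in $\im D_{1,x}^*$. Expanding bilinearly, it therefore suffices to prove two facts at the fixed base point $x$:
\begin{itemize}
\item[(i)] $\Omega_{x,\C}(\cH^1_x,\im D_{1,x}^*)=0$;
\item[(ii)] $\im D_{1,x}^*$ is $\Omega_{x,\C}$-isotropic.
\end{itemize}
Here $u_1,u_2$ are regarded as harmonic representatives in $\cH^1_x$, as in Section~\ref{sec:deformation}. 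Note that neither $b$ nor $v$ enters these two facts; this is exactly why the identity holds on all of $B_x\times U_x$ and not only on the equation locus.

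\textbf{Step 2: the duality operator.} To prove (i) and (ii), I would introduce the conjugate-linear operator $\sharp(a,\psi)=(\psi^{*h},-a^{*h})$, up to a sign and Hodge-star normalization to be fixed. It is built from the harmonic metric and maps $A^{0,1}(\End_0E)\oplus A^{1,0}(\End_0E)$ to itself, and it should satisfy
$$
 \Omega_{x,\C}(\alpha,\beta)=\langle\alpha,\sharp\beta\rangle_{L^2}.
$$
This is the familiar statement that the holomorphic symplectic form is the $L^2$ metric twisted by the complex structures $J,K$ of Lemma~\ref{lem:fixed-fiber-harmonic-tangent}. The essential property is that $\sharp$ intertwines the two ends of the elliptic complex: for some conjugate-linear bundle isometry $\sigma$ from $(1,1)$-forms to sections of $\End_0E$ (Hodge star composed with $h$-adjoint),
$$
 \sharp D_{1,x}^*\eta=\pm D_{0,x}\sigma(\eta).
$$
This is where the Hitchin equations at $x$ are used: the curvature term $[\varphi,\varphi^{*h}]$ and the adjoint relations between $\bar\partial_{\End}$ and $\partial_h$ make the two formulas match. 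It is the same mechanism that yields the trace duality $\cH^2_x\cong(\cH^0_x)^*$ recalled in Section~\ref{sec:deformation}.

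\textbf{Step 3: deducing (i) and (ii).} Granting the intertwining,
$$
 \Omega_{x,\C}(h,D_{1,x}^*\eta)=\pm\langle h,D_{0,x}\sigma\eta\rangle=\pm\langle D_{0,x}^*h,\sigma\eta\rangle=0
$$
for harmonic $h$, which gives (i). Similarly,
$$
 \Omega_{x,\C}(D_{1,x}^*\eta',D_{1,x}^*\eta)=\pm\langle\eta',D_{1,x}D_{0,x}\sigma\eta\rangle=0,
$$
because $D_{1,x}D_{0,x}=0$ at the integrable point $x$; this gives (ii). As a cross-check, (i) for the $\im D_{0,x}$-type pairing is consistent with \eqref{eq:gauge-horizontal}.

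\textbf{Step 4: pullback statement.} Both sides of \eqref{eq:kuranishi-trace} are holomorphic $2$-forms on the smooth ambient space $B_x\times U_x$. Relative to $B_x$, they agree by Steps 1--3. Any holomorphic map $T\to Z_x\subset B_x\times U_x$ from a reduced space sends vertical Zariski tangent vectors to vertical ambient tangent vectors, so the identity is preserved under pullback. Agreement between the $W^{k,2}$ and smooth presentations follows from the regularity of $A_x$ and the last assertion of Lemma~\ref{lem:configuration-pairing}.

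\textbf{Main obstacle.} I expect the main work to be Step 2: pinning down the exact form of $\sharp$ and $\sigma$ with signs consistent with \eqref{eq:trace}, the right-action sign convention, and the identity $D_{1,x}^*=\pm\sharp^{-1}D_{0,x}\sigma$. I would first verify it separately on the pure $\bar\partial$ components and the pure $\varphi$-bracket components, then combine them using the Hitchin equation at $x$.
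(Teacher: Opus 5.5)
Your proposal is correct. Steps~1 and~4 match the paper: you reduce via \eqref{eq:configuration-pairing} to the constant form $\Omega_{x,\C}$, and you pass to pullbacks from the smooth ambient space. What differs is the mechanism for (i) and~(ii).

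The paper stays with the complex-bilinear trace pairing. It uses the identity behind its formula $w=D'f$, $f=i\Lambda G_{2,x}D_{1,x}w$, namely $D_{1,x}^*\eta=D'(i\Lambda\eta)$ with $D'=\partial_h+[\varphi^{*h},\cdot]$, to write every element of $\im D_{1,x}^*$ as $D'f$. It then shows $\langle D'f,t\rangle_{\tr}=\int_X\partial\tr(ft)=0$ by Stokes whenever $D't=0$. This covers both a harmonic $u$ (since $D'u=0$) and $t=D'g$ (since $(D')^2=0$).

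Your $\sharp$ is the conjugate-linear map exchanging the $D'$- and $D''$-pictures. It needs a factor $\pm i$ in front of $(\psi^{*h},-a^{*h})$, the sign depending on the $L^2$ convention, to satisfy $\Omega_{x,\C}(\alpha,\beta)=\langle\alpha,\sharp\beta\rangle_{L^2}$. The same identity $D_{1,x}^*\eta=D'(i\Lambda\eta)$, combined with $(\partial_hf)^{*h}=\bar\partial(f^{*h})$ and $[\varphi^{*h},f]^{*h}=-[\varphi,f^{*h}]$, shows that $\sharp D_{1,x}^*\eta$ is a unimodular multiple of $D_{0,x}\bigl((\Lambda\eta)^{*h}\bigr)$. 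So your (i) and (ii) rest on $D_{0,x}^*u=0$ and $D_{1,x}D_{0,x}=0$, which play the role of the paper's $D'u=0$ and $(D')^2=0$.

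Contrary to your remark, neither the curvature term nor the Hitchin equation enters this intertwining. It is a first-order consequence of the K\"ahler identities and pointwise adjoint relations, valid for any Hermitian metric on a curve. The sign bookkeeping you flag is therefore the only remaining work.

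Your route gives the hyperk\"ahler picture: $\im D_{1,x}^*$ is the image of $\im D_{0,x}$ under the quaternionic structure $\sharp$ (note $\sharp^2=-1$), and isotropy follows from $(D'')^2=0$, which holds for any holomorphic Higgs field. The paper's route is more economical: it never introduces a Hermitian product or conjugate-linear maps, so there are no normalization constants to track.
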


\begin{proof}
The fixed-curve cancellation in
\cite[Lemma~7.2]{FanOrbitTypes}
uses the Hodge operators determined by the solution at $x$.  We use
$$
 D''=\bar\partial_{\End}+[\varphi,\cdot],\qquad
 D'=\partial_h+[\varphi^{*h},\cdot].
$$
The harmonic identities in
\cite[Section~2.1 and proof of Theorem~2.16]{CaoUniversalPolystable}
give $(D')^2=0$, the graded derivation rule, and $D'u=0$ for $u\in V_x$.
For smooth $w\in\im D_{1,x}^*$ they give
$$
 w=D'f,\qquad f=i\Lambda G_{2,x}D_{1,x}w .
$$
Here $\Lambda$ is contraction with the K\"ahler form fixed at $x$.
For every integer $m\geq1$, the operator $i\Lambda G_{2,x}D_{1,x}$ is
bounded from $W^{m,2}$ to $W^{m+1,2}$ and commutes with $H_x$.
All identities restrict to the parallel summand $\End_0E$.

For smooth endomorphism-valued one-forms, define
$$
 \langle s,t\rangle_{\tr}=\int_X\tr(s\wedge t).
$$
For $s=a+\psi$, $t=a'+\psi'$, this is \eqref{eq:trace}.
If $f$ is a zero-form and $D't=0$, the derivation rule and cyclic trace
give
$$
 \tr(D'f\wedge t)=\partial\tr(ft).
$$
The $(1,0)$ part of $\tr(ft)$ has zero $\partial$ on a curve.
For its $(0,1)$ part, $\partial$ is the exterior derivative.
Stokes' theorem therefore gives
\begin{equation}\label{eq:trace-coboundary}
 \langle D'f,u\rangle_{\tr}=0,
 \qquad
 \langle D'f,D'g\rangle_{\tr}=0
\end{equation}
for every harmonic one-form $u$ and all smooth zero-forms $f,g$.
Skew-symmetry gives the corresponding identity with $D'f$ in the
second argument.

We define
$f_x(b,v)=i\Lambda G_{2,x}D_{1,x}w_x(b,v)$.
The regularity of $w_x$ and the boundedness of
$i\Lambda G_{2,x}D_{1,x}$ imply that $f_x$ is holomorphic on
$B_x\times U_x$ with values in $W^{m,2}$ for every integer $m\geq0$.
Moreover, $w_x=D'f_x$, so
$$
 d_vA_x(b,v)u
   =u+D'\bigl(d_vf_x(b,v)u\bigr).
$$
Writing $h_i=d_vf_x(b,v)u_i$ and expanding gives
\begin{align*}
 &\langle u_1+D'h_1,u_2+D'h_2\rangle_{\tr}\\
 &\quad=\langle u_1,u_2\rangle_{\tr}
   +\langle D'h_1,u_2\rangle_{\tr}\\
 &\qquad\quad+\langle u_1,D'h_2\rangle_{\tr}
   +\langle D'h_1,D'h_2\rangle_{\tr}\\
 &\quad=\Omega_{x,\C}(u_1,u_2),
\end{align*}
where \eqref{eq:trace-coboundary} eliminates the last three terms.
Equation~\eqref{eq:configuration-pairing} identifies the left-hand side
with the vertical trace pairing on $X_b$, proving
\eqref{eq:kuranishi-trace}. The same identity holds after pullback to a
reduced analytic base and is independent of the Sobolev completion.
\end{proof}

\subsection{Holomorphic descent and comparison on orbit-type strata}

\begin{prop}\label{prop:relative-descent}
Each stratum $\mathscr S$ has a canonical holomorphic relative form
$\Omega^{\tr}_{\mathscr S/\cT}$, relatively closed and nondegenerate on
$T_{\mathscr S/\cT}$.
It is evaluated by the trace pairing on vertical deformation cocycles and
agrees with the invariant harmonic pairing of
Theorem~\ref{thm:main-harmonic}.  For every holomorphic family of
polystable objects over a reduced analytic germ whose classes lie in
$\mathscr S$, the family trace pairing is the pullback of this form by its
class map.
\end{prop}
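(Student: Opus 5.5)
The plan is to define the form locally in the product charts of Proposition~\ref{prop:fixed-factor}, check that it is holomorphic, relatively closed and nondegenerate there, and then show that the local forms agree on overlaps by expressing each as a chart-independent trace pairing of deformation cocycles.

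\emph{Local definition.} At $x\in\mathscr S$, the chart $\chi_x$ identifies $\mathscr S\cap\mathscr U_x$ with $B_x\times O_x$, where $O_x\subset V_x^{H_x}$. On this chart I set
$$
 \Omega_x:=\operatorname{pr}_2^*\bigl(\Omega_{x,\C}|_{V_x^{H_x}}\bigr),
$$
a constant-coefficient relative two-form in the $O_x$ coordinates. It is therefore holomorphic and satisfies $d_{\mathscr S/\cT}\Omega_x=0$. It is nondegenerate on $T_{\mathscr S/\cT}=\{0\}\times V_x^{H_x}$ because, as shown in Section~\ref{sec:fixed-factors}, $V_x^{H_x}$ is a symplectic subspace. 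Proposition~\ref{prop:trace-identity} gives
$$
 \Omega^{\mathrm{cfg},\tr}_{/B_x}(d_vA_x u_1,d_vA_x u_2)=\Omega_{x,\C}(u_1,u_2)
$$
at every $(b,v)$. Hence $\Omega_x$ is the vertical trace pairing of the Kodaira--Spencer cocycles $d_vA_x(b,v)u_i$ of the family $\theta_x$. This pairing is computed with the induced $(0,1)$ and $(1,0)$ forms on $X_b$, as in Lemma~\ref{lem:configuration-pairing}.

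\emph{Intrinsic meaning and gluing.} The key claim is that, at a point $s\in\mathscr S$ with fiber over $b$, the vertical trace pairing of two deformation cocycles depends only on their classes in $\mathbb H^1$ of the object at $s$. It must not depend on the representative Higgs pair or the chosen family. I will prove this in two steps.

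1. Changing a cocycle by an infinitesimal gauge direction $(\bar\partial_c\xi,[\varphi_c,\xi])$ does not change the pairing, by \eqref{eq:gauge-horizontal}, since the other argument is an integrable deformation.
2. Changing the representative by a determinant-one complex gauge transformation $g$ replaces cocycles by $\Ad(g^{-1})$ of them plus a gauge direction, as in \eqref{eq:overlap-differential}. The trace pairing is conjugation invariant.

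On an overlap $\mathscr U_x\cap\mathscr U_{x'}$, local completeness (end of Section~\ref{sec:relative-families}) relates the two Kuranishi families by a holomorphic family isomorphism over the reduced base. The two local forms are then trace pairings of the same cocycle classes, so they agree and glue to a global $\Omega^{\tr}_{\mathscr S/\cT}$. This is also where the full stabilizer enters: the isomorphism is well defined only up to $H_x$, including normalizer components (Corollary~\ref{cor:normalizer-blocks}), and $H_x$ acts trivially on the fixed tangent vectors.

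\emph{Germ pullback and the harmonic comparison.} For a holomorphic family over a reduced germ $T$ with classes in $\mathscr S$, local completeness gives $f\colon T\to Z_x$ and an isomorphism with $f^*\theta_x$. The classes lie in the stratum, so by \eqref{eq:stabilizer-comparison} and Proposition~\ref{prop:fixed-factor}, $f$ may be chosen to land in $B_x\times O_x$ after conjugating by $H_x$ on closed orbits. The pullback clause of Proposition~\ref{prop:trace-identity}, combined with the gauge invariance of the previous paragraph, then gives the pullback statement.

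For agreement with the invariant harmonic pairing, I will apply the cocycle-independence to the lifts $\widehat c_x(u)$ of Proposition~\ref{prop:harmonic-lifts}. The map $L_u$ of \eqref{eq:harmonic-tangent} differs from the pushed-forward cocycle only by $\Ad(g_x^+)$ and $D_{0,u}$-exact terms, so the trace pairing of $L_u(w_1),L_u(w_2)$ equals $\Omega_x(w_1,w_2)$.

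The step I expect to be the main obstacle is the gauge-independence on singular reduced germs. There tangent vectors are only Zariski tangent vectors, and the map $f$ is not unique. I would first try to reduce to the smooth stratum itself: since the classes lie in $\mathscr S$, the class map factors through the manifold $\mathscr S$. This would let me compare tangent vectors there, using \eqref{eq:gauge-horizontal} pointwise.
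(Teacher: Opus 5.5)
Your proposal is correct and follows essentially the paper's proof. You define the form on the $H_x$-fixed chart through Proposition~\ref{prop:trace-identity}, show the family trace pairing is invariant under determinant-preserving family isomorphisms by cyclicity and \eqref{eq:gauge-horizontal}, and use local completeness for the germ pullback and the gluing. For the overlaps, apply it with the chart recentered at the base-point class, since it is stated only at the chart center.

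The obstacle you flag resolves as you suggest, with no conjugation or choice involved. Polystability makes each $H_x$-orbit of $f(t)$ closed, and the only closed orbit over a point of $q_x(O_x)$ is a fixed singleton, so $f$ lands pointwise in $B_x\times V_x^{H_x}$; reducedness of $T$ then makes the linear equations of $V_x^{H_x}$ vanish holomorphically, so $f$ factors through $B_x\times O_x$ and Zariski tangent vectors cause no difficulty.
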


\begin{proof}
On an $H_y$-fixed chart $B_y\times O_y$ centered at $y$, we define the
form by taking the trace pairing of the vertical derivatives of $\theta_y$.
Proposition~\ref{prop:trace-identity} identifies it with the constant form
$\Omega_{y,\C}|_{V_y^{H_y}}$.  It is holomorphic and relatively closed, and the
averaging argument in Section~\ref{sec:fixed-factors} proves
nondegeneracy.

We first record invariance under family isomorphisms.  Suppose that two
local families of Higgs pairs satisfy $\mathbf c_2=\mathbf c_1^g$ for a
parameter-dependent complex gauge transformation $g$.  Differentiation in
a vertical parameter direction $\dot t\in T_{T/\cT}$ gives
\begin{equation}\label{eq:family-gauge-derivative}
 d\mathbf c_2(\dot t)
 =\Ad(g^{-1})d\mathbf c_1(\dot t)
  +D_{0,\mathbf c_2}\bigl(g^{-1}dg(\dot t)\bigr).
\end{equation}
Here $D_{0,\mathbf c_2}$ is the infinitesimal complex-gauge operator at
$\mathbf c_2$.
Cyclicity of the trace and \eqref{eq:gauge-horizontal} show that the second
term does not change the pairing.  Orientation-preserving
reparametrizations of the curve preserve the trace integral.  The family
trace pairing is therefore invariant under determinant-preserving family
isomorphisms.

Let $T$ be a reduced analytic germ carrying a holomorphic family of
polystable objects whose classes lie in $\mathscr S$, and let $y$ be the
class over its base point.  We choose a determinant-preserving
identification of that fiber with a representative of $y$.  The
local completeness and regularity results
\cite[Proposition~2.10, Lemmas~2.11--2.13, and Theorem~2.14]
{CaoUniversalPolystable} give
$$
 T\xrightarrow{\ \sigma_T\ }Z_y
   =B_y\times(Q_y\cap U_y),
 \qquad
 \mathbf c_T\simeq\sigma_T^*\theta_y.
$$
After shrinking $T$, the composite
$$
 T\xrightarrow{\sigma_T}Z_y
  \xrightarrow{\operatorname{id}_{B_y}\times q_y}B_y\times Y_y
$$
takes values in the $H_y$-fixed chart of $\mathscr S$.  If
$\sigma_T(t)=(b,w)$, then polystability implies that $H_yw$ is closed.
Proposition~\ref{prop:fixed-factor} gives a unique $f\in V_y^{H_y}$ with
$q_y(w)=q_y(f)$.  The orbit $H_yf$ is the singleton $\{f\}$, and
uniqueness of the closed orbit in a quotient fiber gives $w=f$.
The linear equations defining $V_y^{H_y}$ vanish pointwise on $T$; reducedness
makes them vanish as holomorphic functions.  Thus $\sigma_T$ factors
holomorphically through $B_y\times V_y^{H_y}$.

Equation~\eqref{eq:family-gauge-derivative} identifies the family trace
pairing with the pullback of the form on $\mathscr S$.  The same argument
shows that the forms defined by two such local families agree on their
overlap.  They therefore glue, and the calculation proves independence
of the quotient chart, representative, and gauge transformation on the
overlap.

Finally, Theorem~\ref{thm:main-harmonic} supplies smooth harmonic gauges and
identifies the vertical tangent spaces with invariant harmonic cocycles.
Both differentiation of a gauge and Hodge projection change a deformation
cocycle by a coboundary.  Equation~\eqref{eq:gauge-horizontal} therefore
identifies the descended form with the invariant harmonic trace
pairing, hence with the holomorphic symplectic form on each fiber
normalized by \eqref{eq:trace}.
\end{proof}

The next proposition identifies the relative trace form with the reduced
trace form on each local orbit-type stratum.

\begin{prop}\label{prop:chart-forms}
The quotient charts can be chosen so that, for every connected orbit-type
stratum $\mathscr R\subset\mathcal W_x$,
\begin{equation}\label{eq:chart-form}
 \left.\chi_x^*\Omega^{\tr}_{\mathscr S/\cT}
 \right|_{B_x\times\mathscr R}
 =\pr_{\mathscr R}^*\Omega_{\mathscr R}^{\tr}.
\end{equation}
Here $\mathscr S$ is the corresponding stratum of $\cM$, and
$\Omega_{\mathscr R}^{\tr}$ is the reduction of $\Omega_{x,\C}$ on
$Q_x$. These identities are compatible with chart overlaps.
\end{prop}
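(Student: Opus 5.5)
The plan is to reduce \eqref{eq:chart-form} to a comparison of two forms on the smooth locus of a single local stratum. Both forms are then evaluated by the trace pairing $\Omega_{x,\C}$ on representatives in $Q_x$. We use the charts of Lemma~\ref{lem:local-neighborhoods}. By Proposition~\ref{prop:types}, the chart pulls back each global stratum $\mathscr S$ meeting $\mathscr U_x$ to a union of local strata $B_x\times(\mathscr R_\alpha\cap R_\epsilon)\times O_x$. So it suffices to fix one connected stratum $\mathscr R\subset\mathcal W_x$ and check \eqref{eq:chart-form} at an arbitrary point $(b,y)\in B_x\times\mathscr R$.

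First I would recall the reduced form $\Omega^{\tr}_{\mathscr R}$. Following \cite[Sections~4.2--4.9]{Mayrand2020Stratification}, a point $y\in\mathscr R$ has a closed-orbit representative $v\in Q_x\cap U_x$ with stabilizer $J=(H_x)_v$. Near $v$, the stratum lifts to the smooth locus $Q_x^{J}_{v}$ of points of $Q_x$ with stabilizer exactly $J$. The map $q_x$ restricted there is a principal $N_{H_x}(J)/J$-bundle onto $\mathscr R$ near $y$. The reduced form is characterized by
$$
 q_x^*\Omega^{\tr}_{\mathscr R}=\Omega_{x,\C}|_{Q_x^{J}}.
$$
Next, the composite $(b,w)\mapsto\theta_x(b,w)$ for $w$ in this lift is a holomorphic family of polystable objects over a reduced germ. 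Its classes lie in $\mathscr S$, since by \eqref{eq:stabilizer-comparison} the stabilizer is constantly $J$. Its class map is $\chi_x\circ(\mathrm{id}_{B_x}\times q_x)$. The family clause of Proposition~\ref{prop:relative-descent} then says that $(\mathrm{id}\times q_x)^*\chi_x^*\Omega^{\tr}_{\mathscr S/\cT}$ equals the family trace pairing. That pairing is the pairing of the vertical derivatives $d_vA_x(b,w)u_i$. Proposition~\ref{prop:trace-identity} applies along the reduced equation locus $Z_x$ and identifies this pairing with $\Omega_{x,\C}(u_1,u_2)$ for every $b$. Therefore both forms have the same pullback to $B_x\times Q_x^{J}$.

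Since $q_x$ restricted to the lift is a holomorphic submersion onto $\mathscr R$, pullback along it is injective on forms. This gives \eqref{eq:chart-form} near $(b,y)$. There is one remaining subtlety: the vertical tangent of $B_x\times\mathscr R$ also includes no $B_x$-direction, so $\pr_{\mathscr R}^*$ matches a relative form. This is exactly how both sides are defined. I would handle representatives $v$ outside the original domain $Q_x\cap U_x$ but inside $\widetilde U_x$ through the equivariant extension $\widetilde\theta_x$. The relation \eqref{eq:relative-equivariance} together with $H_x$-invariance of $\Omega_{x,\C}$ and the gauge invariance \eqref{eq:family-gauge-derivative} shows that moving $v$ by $h\in H_x$ changes neither side. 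Compatibility on overlaps then follows because both sides are intrinsic. The left side is a global form by Proposition~\ref{prop:relative-descent}. The right side is determined by pullback to family parameters, and the transitions of Proposition~\ref{prop:types} respect the strata.

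I expect the main obstacle to be the lifting step. One must verify that near a closed-orbit representative the stabilizer-$J$ locus in $Q_x$ is a smooth reduced germ on which $q_x$ is a submersion onto $\mathscr R$. One must also verify that the reduced form of \cite{Mayrand2020Stratification} is characterized there by pullback, and not only on minimal vectors in $N$. My first attempt would use the symplectic slice theorem at $v$, applied to the $J$-fixed subspace of the slice, as in Section~\ref{sec:fixed-factors}. This mirrors Proposition~\ref{prop:fixed-factor}: it gives a local product description in which the stratum is parametrized by a $J$-fixed symplectic subspace. If the chart must be shrunk for this, the dilation argument in Lemma~\ref{lem:local-neighborhoods} lets one transport the identity from a neighborhood of $0$ to all of $R_\epsilon$. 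This uses that $\Omega_{x,\C}$ is homogeneous of degree two under scaling, and that both sides scale compatibly.
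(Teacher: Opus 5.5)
Your argument is correct and is essentially the paper's. The paper cites \cite[Lemma~4.11]{Mayrand2020Stratification} for the closed-orbit preimage $\widetilde{\mathscr R}\subset q_x^{-1}(\mathscr R)$: a complex submanifold on which $q_x$ is a surjective submersion with $q_x^*\Omega^{\tr}_{\mathscr R}=\Omega_{x,\C}|_{\widetilde{\mathscr R}}$. Your exact-stabilizer-$J$ locus is a local piece of it, and its orbits are closed by the slice model, which you need for polystability. The paper then combines the family clause of Proposition~\ref{prop:relative-descent}, the equivariantly extended trace identity, and injectivity of pullback along that submersion, just as you do.

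Drop the closing dilation contingency. It is unnecessary, because the argument is local at each closed-orbit representative and the trace identity already holds on all of $B_x\times U_x$. It is also unsupported, because the left side has no a priori homogeneity under $d_t$.
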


\begin{proof}
The linear-reduction theorem
\cite[Section~4.2, Theorem~1.4(i)--(ii), and Lemma~4.11]
{Mayrand2020Stratification} applies to
$(H_x,V_x,\Omega_{x,\C},\nu_{x,\C})$.  Indeed, the compact group admits
the required holomorphic complexification, the action is algebraic and
symplectic, and \eqref{eq:moment} gives the quadratic normalization.  A
compatible invariant quaternionic structure supplies the linear
hyperk\"ahler reduction.  All affine points are semistable, and the
reduced invariant-function quotient is $Y_x$.

For a connected orbit-type stratum $\mathscr R\subset\mathcal W_x$, define
$$
 \widetilde{\mathscr R}
 =\{v\in q_x^{-1}(\mathscr R):H_xv\text{ is closed}\}.
$$
The cited lemma states that $\widetilde{\mathscr R}$ is a complex
submanifold and that
$q_x\colon\widetilde{\mathscr R}\to\mathscr R$ is a surjective
holomorphic submersion satisfying
$$
 q_x^*\Omega_{\mathscr R}^{\tr}
 =\Omega_{x,\C}|_{\widetilde{\mathscr R}}.
$$
Proposition~\ref{prop:types} identifies these connected strata with the
corresponding strata in the gauge-theoretic partition.

For the extended family, \eqref{eq:relative-equivariance} gives
\begin{equation}\label{eq:saturated-family-equivariance}
 \widetilde\theta_x(b,h^{-1}\!\cdot v)
     =\widetilde\theta_x(b,v)^h.
\end{equation}
By \eqref{eq:saturated-domain}, every $v_0\in\widetilde U_x$ has a
neighborhood $N$ in $Q_x$ and an $h\in H_x$ such that
$h\cdot N\subset Q_x\cap U_x$. The extended family on $N$ is obtained
from the original family on $h\cdot N$ by this fixed gauge
transformation, as in \cite[Lemma~2.20]{CaoUniversalPolystable}.
The $H_x$-action preserves $\Omega_{x,\C}$. Differentiating
\eqref{eq:saturated-family-equivariance} and using cyclicity of trace
therefore transports the identity of Proposition~\ref{prop:trace-identity}
to $N$. This proves \eqref{eq:kuranishi-trace} for the extended family
throughout $\widetilde U_x$, also after pullback from a complex manifold.

The family on $B_x\times\widetilde{\mathscr R}$ is polystable, with class
map
$\chi_x\circ(\operatorname{id}_{B_x}\times q_x)$.
Proposition~\ref{prop:relative-descent} and the extended trace identity give
$$
 (\operatorname{id}_{B_x}\times q_x)^*
       \chi_x^*\Omega^{\tr}_{\mathscr S/\cT}
 =\widetilde\theta_x^*\Omega^{\mathrm{cfg},\tr}_{/B_x}
 =\pr_{\widetilde{\mathscr R}}^*\Omega_{x,\C}.
$$
Together with the reduced-form identity, this proves
\eqref{eq:chart-form}, because pullback under a surjective holomorphic
submersion is injective on relative forms.
\end{proof}

\begin{proof}[Proof of Theorem~\ref{thm:main-forms}]
Proposition~\ref{prop:relative-descent} constructs the canonical relative
forms, proves their closedness and nondegeneracy, and identifies their
values with the deformation and harmonic trace pairings.
Proposition~\ref{prop:chart-forms} gives their comparison with the quadratic
reduction forms on every local orbit-type stratum and for every base
parameter.
\end{proof}

\section{Poisson structure}\label{sec:poisson}
We use Proposition~\ref{prop:chart-forms} to construct a Poisson bracket
on the reduced analytic structure sheaf.  We then identify its connected
leaves and determine its behavior under the natural actions.

The reduced analytic quotient $Y_x$ carries a holomorphic Poisson bracket
whose restriction to every orbit-type stratum is the inverse of the
reduced symplectic form
\cite[Theorem~1.4(ii), Section~4.2, and Lemmas~4.11--4.13]
{Mayrand2020Stratification}.
The proof of Proposition~\ref{prop:chart-forms} verifies the hypotheses and
normalization of this result.

To construct the bracket on arbitrary open subsets, let $d=\dim B_x$ and
adjoin a trivial symplectic summand with coordinates
$$
 (b_1,\ldots,b_d,\zeta_1,\ldots,\zeta_d).
$$
The same quotient theorem applies to
$$
 V_x\oplus\C^{2d},\qquad
 \Omega_{x,\C}+\sum_{j=1}^d db_j\wedge d\zeta_j,
$$
with $H_x$ acting trivially on the new summand.  The quotient is
$Y_x\times\C^{2d}$, and each orbit-type stratum is a product
$\mathscr R\times\C^{2d}$ with the split reduced form.

\begin{prop}\label{prop:local-poisson}
The reduced analytic structure sheaf of $B_x\times Y_x$ carries a vertical
holomorphic Poisson bracket on arbitrary open subsets.  It is compatible
with restriction and bilinear over holomorphic functions on $B_x$.
On $B_x\times\mathscr R$, for every connected orbit-type stratum
$\mathscr R\subset Y_x$, it is the inverse of
$\Omega_{\mathscr R}^{\tr}$ in the $\mathscr R$-directions.
\end{prop}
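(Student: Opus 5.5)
The plan is to realize $B_x\times Y_x$ as an open piece of the enlarged quotient $Y_x\times\C^{2d}$ and restrict the Mayrand bracket to functions that do not involve the $\zeta$-variables. Fix a Beltrami coordinate embedding $B_x\hookrightarrow\C^d$ with coordinates $b_1,\dots,b_d$, and consider the open subset $B_x\times Y_x\times\C^d_\zeta$ of $Y_x\times\C^{2d}$. For an open set $W\subset B_x\times Y_x$ put $\widehat W=W\times\C^d_\zeta$. A local function $f\in\cO(W)$ pulls back via the projection $\pi\colon\widehat W\to W$ to $\pi^*f\in\cO(\widehat W)$. The cited theorem \cite[Theorem~1.4(ii) and Lemmas~4.11--4.13]{Mayrand2020Stratification} supplies a holomorphic Poisson bracket $\{\cdot,\cdot\}^\sim$ on the reduced structure sheaf of $Y_x\times\C^{2d}$, defined on arbitrary open subsets and compatible with restriction. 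Its restriction to each product stratum $\mathscr R\times\C^{2d}$ is the inverse of $\Omega^{\tr}_{\mathscr R}\oplus\sum_j db_j\wedge d\zeta_j$.

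I would then define a vertical bracket by setting $\{f,g\}$ equal to the unique function on $W$ whose pullback is $\{\pi^*f,\pi^*g\}^\sim$.

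The first step is to show that $\{\pi^*f,\pi^*g\}^\sim$ is independent of $\zeta$. This reduces to checking that $\{\zeta_j,\pi^*f\}^\sim=0$, which amounts to saying that the Hamiltonian flow of $\zeta_j$ is translation in $-b_j$ direction (or equivalently that the derivation $\{\zeta_j,\cdot\}^\sim$ equals $\pm\partial_{b_j}$). Since $\{\zeta_j,\pi^*f\}^\sim$ is itself a Poisson bracket involving two functions, the cleaner route is the following. On each stratum the inverse form is block diagonal, with $\partial_{\zeta_j}$ paired only with $\partial_{b_j}$. Hence, on the open dense top stratum, the bracket of two $\zeta$-independent functions involves only the $\Omega_{\mathscr R}$ block, so $\partial_{\zeta_j}\{\pi^*f,\pi^*g\}^\sim=0$ there. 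Continuity and reducedness then extend this to all of $\widehat W$: a holomorphic function on a reduced space vanishing on a dense open set vanishes, applied to stratum closures or by evaluating the bracket stratumwise, since the bracket restricted to each stratum is the stratum bracket.

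The expected main obstacle is exactly this descent. A function on the reduced space $\widehat W$ that is pointwise constant in $\zeta$ must be shown to be a pullback of a holomorphic function on $W$. I would handle it using the section $s\colon W\to\widehat W$, $s(w)=(w,0)$, and define $\{f,g\}=s^*\{\pi^*f,\pi^*g\}^\sim$. Stratumwise $\zeta$-independence shows $\pi^*s^*h=h$ pointwise, hence as functions by reducedness.

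Holomorphy, restriction compatibility, the Leibniz rule, skew symmetry and the Jacobi identity then follow. Each identity holds for $\{\cdot,\cdot\}^\sim$, and $\pi^*$ and $s^*$ are algebra maps with $\pi^*s^*=\Id$ on $\zeta$-independent functions. For $p^{-1}\cO_{B_x}$-bilinearity, the leading step is $\{\pi^*\beta,\pi^*g\}^\sim=0$ for $\beta$ a function of $b$ only. On each stratum the Hamiltonian vector of $\beta$ is a combination of $\partial_{\zeta_j}$, which annihilates $\zeta$-independent $\pi^*g$. Evaluating stratumwise gives vanishing pointwise, hence as functions, and the Leibniz rule then yields bilinearity.

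Finally, on $B_x\times\mathscr R$ the stratumwise formula shows that the bracket of two functions is computed by $(\Omega^{\tr}_{\mathscr R})^{-1}$ applied to their $\mathscr R$-differentials, because the $b$-differential pairs only with $\zeta$-directions. This gives the last assertion.
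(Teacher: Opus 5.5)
Your proposal is correct and follows the paper's proof. You pull $\zeta$-independent functions back to an open subset of $Y_x\times\C^{2d}$ and evaluate the enlarged bracket stratumwise to see that it is pointwise independent of $\zeta$. You then use reducedness to identify it with the pullback of its restriction to $\zeta=0$, and descend the Poisson identities and the Casimir property of base functions exactly as the paper does.

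One small slip, which you do not end up using: the aside that reduces $\zeta$-independence to $\{\zeta_j,\pi^*f\}^\sim=0$ has $b_j$ and $\zeta_j$ swapped. It is $\{b_j,\cdot\}^\sim=\pm\partial_{\zeta_j}$ that annihilates $\pi^*f$. Also, the pointwise stratumwise evaluation, not density of a top stratum, is what carries the argument.
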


\begin{proof}
Let $\mathscr U$ be any open subset of $B_x\times Y_x$.
We pull $F,G\in\cO(\mathscr U)$ back to functions independent of $\zeta$
on $\mathscr U\times\C_\zeta^d$.  On a product stratum, the product
Poisson bracket is
\begin{align*}
 \{F,G\}_{\mathrm{prod}}
 &=\{F,G\}_{Y_x}
   +\sum_{j=1}^d
     \left(
       \frac{\partial F}{\partial b_j}
       \frac{\partial G}{\partial\zeta_j}
       -\frac{\partial F}{\partial\zeta_j}
       \frac{\partial G}{\partial b_j}
     \right)\\
 &=\{F,G\}_{Y_x}.
\end{align*}
Thus the product bracket of the pulled-back functions is pointwise
independent of $\zeta$. Reducedness implies that it is the pullback of
its restriction to $\zeta=0$. This restriction defines a holomorphic function
on $\mathscr U$.  The construction is compatible with restriction of open
sets.  The Jacobi identity, Leibniz rule, skew-symmetry, and bilinearity
descend from the product bracket.  The displayed formula also shows that
base functions are Casimirs and that the construction is independent of
the chosen auxiliary coordinates.
\end{proof}

\begin{proof}[Proof of Theorem~\ref{thm:main-poisson}]
We transport the local brackets of Proposition~\ref{prop:local-poisson}
through the quotient charts.  On an overlap, equation
\eqref{eq:chart-form} identifies both brackets on every stratum with the
inverse of the same relative form.  A holomorphic function on a reduced
analytic space that vanishes at every point is zero, so the two local
brackets agree as sections.  They therefore glue, and the same argument
proves uniqueness.
The local construction gives bilinearity over
$p^{-1}\cO_{\cT}$, vanishing brackets with pullbacks of base
functions, and the Jacobi and Leibniz identities.
The quotient charts identify this bracket with the vertical Poisson
bracket on $B_x\times Y_x$ induced by the quadratic reduction.
\end{proof}

\begin{lem}\label{lem:derivation-flows}
Let $X$ be a reduced complex analytic space with a locally finite
partition into smooth locally closed submanifolds.  Let $V$ be a derivation
of $\cO_X$ whose restriction to each submanifold is a tangent holomorphic
vector field.  The local flow of $V$ preserves every connected member of
the partition.
\end{lem}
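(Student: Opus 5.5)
The plan is to reduce the statement to the classical uniqueness of integral curves on each member of the partition, using local finiteness together with the maximal-stratum argument. First I make precise what "local flow of $V$" means on a singular reduced space. I fix a local embedding $X\supset U\hookrightarrow \C^n$. After shrinking, the derivation $V$ is the restriction of a holomorphic vector field $\widetilde V$ on an open set of $\C^n$: lift $V(z_i)$ to ambient holomorphic functions. Because $V$ is a derivation of $\cO_X$, it preserves the ideal of $X$, and hence $\widetilde V$ is tangent to $X$ in the sense that its flow preserves the reduced analytic subset $X$. This last point is a standard fact for reduced spaces: a vector field preserving the radical ideal has flow preserving the zero set. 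The flow of $V$ on $X$ is then the restriction of the flow of $\widetilde V$, and it is independent of the chosen lift on $X$. I expect to take this as the meaning of the statement, or cite it.

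The main step is to show that an integral curve $\gamma(t)$ starting at a point $y$ of a member $S$ stays in $S$. Since $V|_S$ is a holomorphic vector field tangent to the manifold $S$, it has its own integral curve $\sigma$ in $S$ through $y$. Viewed in the ambient $\C^n$, $\sigma$ is an integral curve of $\widetilde V$: the restriction of $V$ to $S$ computes $\widetilde V$ on functions, so the derivative of $\sigma$ agrees with $\widetilde V$ along $\sigma$. Uniqueness of ODE solutions for the holomorphic field $\widetilde V$ then gives $\gamma=\sigma$ on the common domain. So the flow preserves $S$ for small times, and by connectedness of the time domain (a disk or interval) it preserves $S$ for all times where the flow is defined.

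The delicate point is that $\sigma$ may a priori leave $S$ in finite time, toward its frontier, while $\gamma$ continues. To handle this, I take the maximal time interval on which $\gamma$ stays in $S$; this set is open by the argument above. I then argue it is closed. At a limit time $t_*$, the point $\gamma(t_*)$ lies in a member $S'$ with $S'\cap\overline S\neq\varnothing$. Running the same argument from $\gamma(t_*)$ backwards, $\gamma(t)\in S'$ for $t$ near $t_*$, including times just before $t_*$. Those times lie in $S$, which forces $S'=S$. Local finiteness is used here to ensure that each point lies in exactly one member and that each member is a manifold near that point. Finally, the flow at a fixed time is a homeomorphism between open sets, so it maps connected members into connected members. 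This argument also shows that members of the partition are preserved, not merely their union, which gives the conclusion of the statement.
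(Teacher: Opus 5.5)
Your proof is correct and follows essentially the same route as the paper. Both arguments lift $V$ to an ambient field $\widetilde V$ preserving $I_X$, use ODE uniqueness to identify the intrinsic integral curve on each member with the ambient one, and then use that the sets of times spent in the various members are disjoint open subsets of the connected time domain. The differences are minor: the paper proves ambient invariance of $X$ directly (writing $\widetilde V(f_i)=\sum_j a_{ij}f_j$ for ideal generators and applying uniqueness to the resulting linear ODE along $\gamma$) instead of citing it, and it deduces the connected-component statement from continuity of the trajectory in the member, which is cleaner than your homeomorphism remark.
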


\begin{proof}
We choose a local embedding of $X$ in $\C^N$ and let
$z_1,\ldots,z_N$ be the ambient
coordinates.  Holomorphic lifts of the functions $V(z_i)|_X$ define
$$
 \widetilde V=\sum_{i=1}^N\widetilde{V(z_i)}
                 \frac{\partial}{\partial z_i}.
$$
If $I_X$ is the defining ideal, then
$\widetilde V(I_X)\subset I_X$.  For local generators
$f_1,\ldots,f_{N_I}$ of $I_X$, there are holomorphic functions $a_{ij}$ such
that
$$
 \widetilde V(f_i)=\sum_{j=1}^{N_I} a_{ij}f_j.
$$
Along an integral curve $\gamma$ of $\widetilde V$, the functions
$f_i(\gamma(t))$ satisfy the homogeneous system
$$
 \frac{d}{dt}f_i(\gamma(t))
   =\sum_{j=1}^{N_I} a_{ij}(\gamma(t))f_j(\gamma(t)).
$$
Uniqueness keeps these functions equal to zero when $\gamma(0)\in X$;
hence the ambient flow preserves $X$.  On a smooth member of the
partition, the restricted tangent field has its own local flow, which
agrees with the ambient flow by uniqueness. The same argument at every
time shows that the set of times spent in each member is open.
These disjoint open sets partition the connected time domain, so the
trajectory stays in a single member. Its continuous image stays in one
connected component.
\end{proof}

\begin{cor}\label{cor:leaves}
The symplectic leaves of the bracket in
Theorem~\ref{thm:main-poisson} are the connected components of
$\mathscr S\cap p^{-1}(b)$, where $\mathscr S$ is a stratum
and $b\in\cT$.
\end{cor}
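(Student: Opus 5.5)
The symplectic leaves of a singular holomorphic Poisson structure on a reduced space are the maximal connected subsets joined by piecewise flows of Hamiltonian derivations $V_f=\{f,\cdot\}$. The plan is to show two things: every such flow preserves each connected component $\mathscr S_b$ of $\mathscr S\cap p^{-1}(b)$, and on each $\mathscr S_b$ these flows act transitively, because the bracket is nondegenerate there. Together these give that each $\mathscr S_b$ is exactly one leaf.

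\emph{Invariance.} For a local holomorphic $f$ on $\cM$, the derivation $V_f$ of $\cO_{\cM}$ is computed through Proposition~\ref{prop:local-poisson}. In a quotient chart $\chi_x$, its restriction to each product stratum $B_x\times\mathscr R$ is $0$ in the $B_x$-direction, since $\{p^{\#}b_j,\cdot\}=0$ by Theorem~\ref{thm:main-poisson}. In the $\mathscr R$-direction it is the Hamiltonian field of $f|_{\{b\}\times\mathscr R}$ for the reduced form $\Omega^{\tr}_{\mathscr R}$. This field is a tangent holomorphic vector field on each member of the partition
\[
\bigl\{\mathscr S_b\bigr\}_{\mathscr S,\,b}
=\Comp\bigl(\{\mathscr S\cap p^{-1}(b)\}\bigr),
\]
which refines $\mathcal P_{\mathrm{conn}}$. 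By Theorem~\ref{thm:main-strata} and Proposition~\ref{prop:fixed-factor}, this refinement is still a locally finite partition into smooth locally closed submanifolds: the fibers of the submersion $p|_{\mathscr S}$ are submanifolds, and in a chart they are the slices $\{b\}\times\mathscr R$. Lemma~\ref{lem:derivation-flows} then shows that the local flow of $V_f$ preserves each $\mathscr S_b$.

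\emph{Transitivity.} On $\mathscr S_b$, Theorem~\ref{thm:main-poisson} gives the bracket as $(\Omega^{\tr}_{\mathscr S/\cT}|_{\mathscr S_b})^{-1}$, which is nondegenerate by Theorem~\ref{thm:main-forms}. I will show the Hamiltonian vectors at a point $y\in\mathscr S_b$ span $T_y\mathscr S_b$. Every covector on $T_y\mathscr S_b$ is the restriction of $df$ for some local holomorphic function $f$ on $\cM$: in the product chart of Lemma~\ref{lem:local-neighborhoods}, $\mathscr S_b$ near $y$ is $\{b\}\times(\text{stratum})\times O_y$, a submanifold of an analytic space embedded in $\C^N$, and linear coordinate functions restrict to a local chart of it. Applying $\Omega^{-1}$ to these covectors, the Hamiltonian fields span the tangent space. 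Flows of finitely many such fields therefore cover a neighborhood of $y$ in $\mathscr S_b$, so each orbit of the Hamiltonian pseudogroup is open in $\mathscr S_b$. Since the orbits partition the connected set $\mathscr S_b$, there is exactly one orbit, and $\mathscr S_b$ is a single leaf.

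The main obstacle is the invariance step: checking that $V_f$ is tangent to the fiberwise components and not merely to the total strata $\mathscr S$. This rests on the Casimir property $\{p^{\#}f,g\}=0$. That property shows $V_f(p^{\#}b_j)=0$, so $p$ is constant along flows, and then Lemma~\ref{lem:derivation-flows} applies to the refined partition. I will also verify that the refined partition inherits local finiteness from the finitely many product strata in each chart, since Lemma~\ref{lem:derivation-flows} requires it.
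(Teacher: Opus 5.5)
Your argument is the paper's argument. Nondegeneracy of $\Omega^{\tr}_{\mathscr S/\cT}$ on $\mathscr S_b$, together with restrictions of ambient coordinate differentials, gives Hamiltonian fields spanning $T\mathscr S_b$. The Casimir property keeps flows in $p^{-1}(b)$, and Lemma~\ref{lem:derivation-flows} gives invariance. An endpoint map, the inverse function theorem and connectedness then give a single Hamiltonian orbit.

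One step is false as written. The refined partition $\{\mathscr S_b\}$ is \emph{not} locally finite. Since $\dim_\C\cT=3g-3>0$, every neighborhood of a point $y$ contains the points $\chi_y(b,q_y(0))$ for all $b$ near $p(y)$. By Proposition~\ref{prop:fixed-factor}, these lie in the stratum $\mathscr S$ through $y$, but in pairwise distinct fibers $\mathscr S\cap p^{-1}(b)$. So the local-finiteness check you promise in your last paragraph cannot succeed. Lemma~\ref{lem:derivation-flows}, whose hypothesis includes local finiteness, therefore cannot be quoted for the refined partition.

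The repair is the order of argument the paper uses:
\begin{enumerate}
\item Apply Lemma~\ref{lem:derivation-flows} to $\mathcal P_{\mathrm{conn}}$ itself, which is locally finite by Theorem~\ref{thm:main-strata}. This shows each Hamiltonian flow preserves the connected stratum $\mathscr S$.
\item Use the Casimir property $\{p^{\#}b_j,f\}=0$, which you already noted. It makes $p$ constant along trajectories, so they stay in $\mathscr S\cap p^{-1}(b)$.
\item A trajectory is continuous on a connected time domain, so it stays in a single connected component $\mathscr S_b$.
\end{enumerate}
Alternatively, you may observe that the proof of Lemma~\ref{lem:derivation-flows} never uses local finiteness. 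Its openness-of-times argument works for any partition into locally closed submanifolds to which the derivation is tangent, so the refined partition would then be legitimate. In that case you must state this explicitly rather than assert local finiteness.
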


\begin{proof}
Let $L$ be a connected component of
$\mathscr S\cap p^{-1}(b)$.  The bracket restricts to the
inverse of $\Omega^{\tr}_{\mathscr S/\cT}$ on $L$ and is nondegenerate.
In a local embedding of $\cM$ into $\C^N$, the restrictions of the
ambient coordinate differentials span $T^*L$.  Their Hamiltonian vector
fields therefore span $TL$.  Base functions are Casimirs, so all
Hamiltonian flows remain in the fiber over $b$.  Proposition
\ref{prop:local-poisson} shows that every Hamiltonian derivation restricts
to a tangent holomorphic vector field on each orbit-type stratum, and
Lemma~\ref{lem:derivation-flows} shows that its flow preserves every
connected stratum.

For $z\in L$, choose Hamiltonian vector fields whose values form a basis
of $T_zL$.  The endpoint map obtained by composing their local flows has
invertible differential at the origin.  The holomorphic inverse function
theorem therefore shows that the local Hamiltonian orbit of $z$ is open
in $L$.  These orbits form a disjoint open cover of the connected manifold
$L$, so $L$ is a single Hamiltonian orbit and hence a maximal connected
symplectic leaf.
\end{proof}

Let $\Diff^+(\Sigma)$ be the group of smooth diffeomorphisms of $\Sigma$
that preserve orientation.
Its component group is $\Gamma_g=\pi_0(\Diff^+(\Sigma))$.
We denote the holomorphic change-of-marking action by $A_\gamma$,
$\gamma\in\Gamma_g$, and the holomorphic Higgs-field scaling action by
$\rho_\lambda$, $\lambda\in\C^*$.
The action result
\cite[Corollary~5.1]{CaoUniversalPolystable}
gives commuting holomorphic actions on the marked analytic space.
Scaling is jointly holomorphic in $\lambda$ and fixes $\cT$, while the
change-of-marking action covers the standard action on Teichm\"uller
space.

\begin{prop}\label{prop:actions}
The change-of-marking action preserves the relative forms and the bracket of
Theorem~\ref{thm:main-poisson}.
For every $\lambda\in\C^*$ and every stratum $\mathscr S$,
$$
 \rho_\lambda^*\Omega^{\tr}_{\rho_\lambda(\mathscr S)/\cT}
     =\lambda\Omega^{\tr}_{\mathscr S/\cT},\qquad
 \rho_\lambda^*
   \Pi_{\rho_\lambda(\mathscr S)/\cT}
     =\lambda^{-1}\Pi_{\mathscr S/\cT},
$$
where
$\Pi_{\mathscr S/\cT}=(\Omega^{\tr}_{\mathscr S/\cT})^{-1}$.
On the reduced structure sheaf,
$$
 \{\rho_\lambda^*f,\rho_\lambda^*g\}
     =\lambda\,\rho_\lambda^*\{f,g\}.
$$
The two actions commute.
\end{prop}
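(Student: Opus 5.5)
Both actions act on the marked analytic space by \cite[Corollary~5.1]{CaoUniversalPolystable}. The plan is to compute their effect on the family trace pairing through the characterization in Proposition~\ref{prop:relative-descent}. That proposition says the relative form is the pullback of the family trace pairing by the class map, for any holomorphic polystable family over a reduced germ with classes in a stratum.

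\textbf{Strata are permuted.} Both actions send polystable objects to polystable objects and respect determinant-preserving isomorphisms.
\begin{itemize}
\item Scaling $\varphi\mapsto\lambda\varphi$ fixes $\Aut(E,\tau,\varphi)$, so full stabilizers are unchanged.
\item A change of marking transports the stabilizer by a diffeomorphism-induced bundle map. The transported group is conjugate in $\cG_{k+1}^{\C}$ after identifying reference bundles, which Proposition~\ref{prop:conjugacy} allows.
\end{itemize}
Both maps are homeomorphisms, so they send connected orbit-type components onto connected orbit-type components. Hence $\rho_\lambda(\mathscr S)$ and $A_\gamma(\mathscr S)$ are strata. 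Since $\rho_\lambda$ fixes $\cT$ and $A_\gamma$ covers a biholomorphism of $\cT$, both preserve vertical tangent bundles.

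\textbf{Effect on the forms.} Take a local holomorphic family $\mathbf c_T$ with classes in $\mathscr S$.
\begin{itemize}
\item Under scaling, $\rho_\lambda\circ(\text{class map})$ is the class map of $(\bar\partial,\lambda\varphi)$. Its vertical variations are $(\dot a,\lambda\dot\psi)$. Definition~\ref{defn:configuration} is linear in the Higgs slot of each term, so the pairing is multiplied by $\lambda$. This gives $\rho_\lambda^*\Omega^{\tr}_{\rho_\lambda(\mathscr S)/\cT}=\lambda\Omega^{\tr}_{\mathscr S/\cT}$, since pullback to families through the fixed-subspace charts of Proposition~\ref{prop:fixed-factor} determines the form.
\item For $A_\gamma$, a representative diffeomorphism acts on the family by pullback. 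It is orientation preserving, so it preserves the trace integral. Isotopic representatives give isomorphic families, and Proposition~\ref{prop:relative-descent} shows the pairing is invariant under such isomorphisms. Hence $A_\gamma^*\Omega^{\tr}=\Omega^{\tr}$.
\end{itemize}
Inverting the form on the vertical bundle gives $\rho_\lambda^*\Pi=\lambda^{-1}\Pi$, meaning $\Pi$ pushed forward by $d\rho_\lambda$ equals $\lambda^{-1}$ times $\Pi$ at the image point. Equivalently, for the bracket on each leaf, $\{\rho_\lambda^*f,\rho_\lambda^*g\}=\lambda\,\rho_\lambda^*\{f,g\}$. The scalar factor follows from the $\lambda$ scaling of the symplectic form, but the sign convention for pullback of the bivector versus the bracket needs care; I would check it on a linear model.

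\textbf{Identities on sheaves and commutation.} Both sides of each bracket identity are holomorphic functions on reduced open sets. By Theorem~\ref{thm:main-poisson}, they agree pointwise on every leaf, and Corollary~\ref{cor:leaves} says the leaves cover $\cM$. Reducedness then gives equality as sections, exactly as in the uniqueness argument of Theorem~\ref{thm:main-poisson}. The same argument with factor $1$ shows that $A_\gamma$ preserves the bracket. Commutation of the two actions is in the cited corollary, and also follows directly: pulling back along a diffeomorphism commutes with multiplying the Higgs field by $\lambda$.

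The main obstacle is to justify that a change of marking preserves stratum membership and the form canonically. Since $A_\gamma$ is defined only up to isotopy, the pairing must be shown independent of the chosen diffeomorphism. I would handle this through the invariance under family isomorphisms and orientation-preserving reparametrizations already established in Proposition~\ref{prop:relative-descent}.
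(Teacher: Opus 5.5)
Your proposal follows the paper's proof step by step. Both actions permute the connected orbit-type strata, since scaling leaves $\Aut(E,\tau,\varphi)$ unchanged and a change of marking transports it. Orientation-preserving pullback preserves the trace integral. Scaling sends vertical cocycles $(a,\psi)$ to $(a,\lambda\psi)$, which multiplies \eqref{eq:trace} by $\lambda$. Inversion on each stratum and reducedness then finish the argument.

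One thing needs correcting: your gloss of $\rho_\lambda^*\Pi_{\rho_\lambda(\mathscr S)/\cT}=\lambda^{-1}\Pi_{\mathscr S/\cT}$ is inverted.
\begin{itemize}
\item From $\rho_\lambda^*\Omega^{\tr}_{\rho_\lambda(\mathscr S)/\cT}=\lambda\,\Omega^{\tr}_{\mathscr S/\cT}$ one gets $(d\rho_\lambda)_*\Pi_{\mathscr S/\cT}=\lambda\,\Pi_{\rho_\lambda(\mathscr S)/\cT}$ at the image point. The pullback of a bivector is the pushforward by $d\rho_\lambda^{-1}$, so $\Pi$ pushed forward by $d\rho_\lambda$ carries the factor $\lambda$, not $\lambda^{-1}$.
\item It is this factor $\lambda$ that gives $\{\rho_\lambda^*f,\rho_\lambda^*g\}=\lambda\,\rho_\lambda^*\{f,g\}$. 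Taken literally, your gloss would give $\lambda^{-1}$ and contradict the bracket formula you then assert.
\item There is no sign issue to check, only the power of $\lambda$. The linear model $\rho(q,p)=(q,\lambda p)$ with $\Omega=dq\wedge dp$ settles it: $\{\rho^*q,\rho^*p\}=\lambda=\lambda\,\rho^*\{q,p\}$ under either sign convention.
\end{itemize}
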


\begin{proof}
Both actions preserve the determinant trivialization and trace-free
condition. Higgs scaling leaves the automorphism equations unchanged;
a change of marking transports the full automorphism group. Hence both
actions permute the connected orbit-type strata.

Orientation-preserving curve pullback and bundle identification preserve
the trace integral.
Consequently,
$A_\gamma^*\Omega^{\tr}_{A_\gamma(\mathscr S)/\cT}
 =\Omega^{\tr}_{\mathscr S/\cT}$.
On a fixed curve, scaling sends a deformation cocycle $(a,\psi)$ to
$(a,\lambda\psi)$.
Substitution in \eqref{eq:trace} gives the first scaling identity.
Inverting the nondegenerate vertical forms gives the stated tensor
weight and bracket transformation on each stratum.  The
change-of-marking action preserves the bracket on every stratum.
Reducedness extends these identities to the reduced structure sheaf
$\cO_{\cM}$.
\end{proof}

\begin{rem}\label{rem:rank-one}
For rank one, the specified determinant trivialization identifies
$E$ with $\cO_X$, and the trace-free condition gives $\varphi=0$.
Thus $\cM=\cT$ and $p$ is the identity. There is one stratum; its
relative tangent and relative two-form vanish. The Poisson bracket is
zero and its leaves are points.
\end{rem}


\begin{thebibliography}{99}

\bibitem[AR26]{AzamRayanRelativeHarmonic}
M.~Azam and S.~Rayan,
\emph{Diffeological non-Abelian Hodge theory: relative harmonic metrics
and deformation theory},
preprint (2026), arXiv:2607.18989v1.
\url{https://arxiv.org/abs/2607.18989v1}.

\bibitem[Cao26]{CaoUniversalPolystable}
Shiyu Cao,
\emph{An analytic construction of the universal moduli space of
$\mathrm{SL}_r(\mathbb C)$-Higgs bundles},
preprint (2026), arXiv:2609.25807v2.
Citations to numbered results and equations refer to version~2.
\url{https://arxiv.org/abs/2609.25807v2}.

\bibitem[CTW25]{CTW25}
Brian Collier, J\'er\'emy Toulisse, and Richard Wentworth,
\emph{Higgs Bundles, Isomonodromic Leaves and Minimal Surfaces},
preprint (2025), arXiv:2512.07152v1.
\url{https://arxiv.org/abs/2512.07152v1}.

\bibitem[Fan22a]{FanConstruction}
Yue Fan,
\emph{Construction of the moduli space of Higgs bundles using analytic methods},
Math. Res. Lett. \textbf{29} (2022), no.~4, 1011--1048.
\url{https://doi.org/10.4310/MRL.2022.v29.n4.a5}.
Preprint version: arXiv:2004.07182v2 (2020),
\url{https://arxiv.org/abs/2004.07182v2}.

\bibitem[Fan22b]{FanQuasiprojectivity}
Yue Fan,
\emph{An analytic approach to the quasi-projectivity of the moduli space
of Higgs bundles},
Adv. Math. \textbf{406} (2022), Article 108506.
\url{https://doi.org/10.1016/j.aim.2022.108506}.
Preprint version: arXiv:2009.14680v1 (2020),
\url{https://arxiv.org/abs/2009.14680v1}.

\bibitem[Fan22c]{FanOrbitTypes}
Yue Fan,
\emph{The orbit type stratification of the moduli space of Higgs bundles},
Math. Ann. \textbf{383} (2022), 1033--1059.
\url{https://doi.org/10.1007/s00208-021-02244-0}.
Citations to numbered results follow arXiv:2005.13649v1 (2020).
\url{https://arxiv.org/abs/2005.13649v1}.

\bibitem[Hit87]{Hitchin1987}
N.~J.~Hitchin,
\emph{The self-duality equations on a Riemann surface},
Proc. London Math. Soc. (3) \textbf{55} (1987), no.~1, 59--126.
\url{https://doi.org/10.1112/plms/s3-55.1.59}.

\bibitem[Kat95]{KatoPerturbation}
Tosio Kato,
\emph{Perturbation Theory for Linear Operators},
Classics in Mathematics, Springer-Verlag, Berlin--Heidelberg, 1995,
reprint of the 1980 corrected printing of the second edition.
\url{https://doi.org/10.1007/978-3-642-66282-9}.

\bibitem[May22]{Mayrand2020Stratification}
Maxence Mayrand,
\emph{Stratification of singular hyperk\"ahler quotients},
Complex Manifolds \textbf{9} (2022), no.~1, 261--284.
\url{https://doi.org/10.1515/coma-2021-0140}.
Citations to numbered results follow arXiv:1807.05992v2 (2020).
\url{https://arxiv.org/abs/1807.05992v2}.

\bibitem[Nit91]{Nitsure1991}
Nitin Nitsure,
\emph{Moduli space of semistable pairs on a curve},
Proc. London Math. Soc. (3) \textbf{62} (1991), no.~2, 275--300.
\url{https://doi.org/10.1112/plms/s3-62.2.275}.

\bibitem[Sim92]{Simpson1992}
Carlos~T.~Simpson,
\emph{Higgs bundles and local systems},
Publ. Math. IH\'ES \textbf{75} (1992), 5--95.
\url{https://doi.org/10.1007/BF02699491}.

\bibitem[Sim94]{Simpson1994I}
Carlos~T.~Simpson,
\emph{Moduli of representations of the fundamental group of a smooth
projective variety I},
Publ. Math. IH\'ES \textbf{79} (1994), 47--129.
\url{https://doi.org/10.1007/BF02698887}.

\end{thebibliography}
\end{document}